\documentclass[a4paper,10pt]{amsart}
 \usepackage{amsfonts,mathrsfs,amsrefs}
 \usepackage{amsthm}
 \usepackage{amssymb}
 \usepackage{enumerate}
 \usepackage{tikz-cd}
 \usepackage{tikz}
 \usepackage{cleveref}
 \usepackage{url}
 \usepackage{framed}
\usepackage{enumitem}

\theoremstyle{definition}
\newtheorem{Def}{Definition}[section]
\newtheorem{ex}[Def]{Example}

\newtheorem{rem}[Def]{Remark}

\theoremstyle{plain}
\newtheorem{prop}[Def]{Proposition}

\newtheorem{thm}[Def]{Theorem}
\newtheorem*{thm*}{Theorem}
\newtheorem{claim*}{Claim}
\newtheorem{lem}[Def]{Lemma}
\newtheorem{cor}[Def]{Corollary}
\newtheorem*{cor*}{Corollary}

\newtheorem*{con*}{Conjecture}
\newtheorem{frag}[Def]{Question}
\newtheorem*{frag*}{Question}

\newtheorem*{verm*}{Vermutung}

\usepackage{mathrsfs}





\newcommand{\rank}{\operatorname{rank}} 


\newcommand{\Div}{\operatorname{Div}}








\newcommand{\cF}{{\mathcal F}}

\newcommand{\cH}{{\mathcal H}}
\newcommand{\cI}{{\mathcal I}}

\newcommand{\cO}{{\mathcal O}}

\newcommand{\cR}{{\mathcal R}}

\newcommand{\cV}{{\mathcal V}}




\newcommand{\C}{{\mathbb C}}
\newcommand{\R}{{\mathbb R}}
\newcommand{\pp}{\mathbb{P}}

\newcommand{\N}{{\mathbb N}}
\newcommand{\Z}{{\mathbb Z}}




\title[Determinantal Representations of Adjoints]{Adjoints of Polytopes: Determinantal Representations and Smoothness}

\author{Clemens Brüser}
\address{Technische Universit\"at Dresden, Germany} 
\email{clemens.brueser@tu-dresden.de}
\author{Mario Kummer}
\email{mario.kummer@tu-dresden.de}
\author{Dmitrii Pavlov}
\email{dmitrii.pavlov@mis.mpg.de}

\thanks{The first and second authors were supported by the DFG grant 502861109.}
\pdfinfo{
  /Title    (Title)
  /Author   (Mario Denis Kummer)
  /Creator  ()
  /Producer ()
  /Subject  ()
  /Keywords ()
}

\newcommand{\comment}[1]{}

\setlist[enumerate]{label=(\roman*)}

\pagestyle{headings}
\begin{document}

\subjclass[2020]{Primary: 14M12. Secondary: 14N20, 14Q30}

\begin{abstract}
 In this article we study determinantal representations of adjoint hypersurfaces of polytopes. We prove that adjoint polynomials of all polygons can be represented as determinants of tridiagonal symmetric matrices of linear forms with the matrix size being equal to the degree of the adjoint. We prove a sufficient combinatorial condition for a surface in the projective three-space to have a determinantal representation and use it to show that adjoints of all three-dimensional polytopes with at most eight facets and a simple facet hyperplane arrangement admit a determinantal representation. This includes all such polytopes with a smooth adjoint. We demonstrate that, starting from four dimensions, adjoint hypersurfaces may not admit linear determinantal representations. Along the way we prove that, starting from three dimensions, adjoint hypersurfaces are typically singular, in contrast to the two-dimensional case. We also consider a special case of interest to physics, the ABHY associahedron. We construct a determinantal representation of its universal adjoint in three dimensions and show that in higher dimensions a similarly structured representation does not exist. 
\end{abstract}
\maketitle
\section{Introduction}
\emph{Positive geometry} \cite{ranestad2025positive} is a rapidly emerging area on the intersection of mathematics and theoretical physics. One of its main goals is to use tools from applied algebraic geometry and combinatorics to answer questions appearing in particle physics and cosmology. Central mathematical objects in this context are \emph{positive geometries}, introduced in \cite{arkani2017positive}. The formal definition of a positive geometry (see e.g. \cite{Lam2022PosGeom}*{Definition 1}) is involved. Roughly speaking, this is a pair consisting of a projective complex variety $X$ and a semialgebraic set $X_{\geq 0}$ inside the real points of $X$ that comes with a unique rational top-form $\Omega_{X,X_{\geq 0}}$ on $X$, called the \emph{canonical form}, which encapsulates information about the boundary structure of $X_{\geq 0}$. 

Arguably one of the simplest and most well-studied examples of positive geometries is that of polytopes in the projective space $\mathbb{P}^n$. For a polytope $P\subseteq \mathbb{P}^n$ the canonical form is given by 
$$\Omega_{\mathbb{P}^n, P} = f_P(x) \mu_{\mathbb{P}^n}(x).$$
Here $\mu_{\mathbb{P}^n} = \sum_{i=0}^n (-1)^{i}x_i dx_1\wedge\ldots\wedge \widehat{dx_i} \wedge\ldots\wedge dx_n$ is the standard meromorphic top-form on $\mathbb{P}^n$, where $(x_0:\ldots:x_n)$ is the vector of homogeneous coordinates, and $f_P(x)$ is a rational function which we will refer to as the \emph{canonical function} of $P$. 

The structure of $f_P(x)$ can be read off from the combinatorics of the polytope $P$. To explain this, we recall the notion of the \emph{residual arrangement}. This is defined as the union of all intersections of facet hyperplanes of $P$ that do not contain a face of $P$. The canonical function is then of the form 
$$f_P(x) = \dfrac{\alpha_P(x)}{\prod_{i=1}^k l_i(x)},$$
where $l_i(x)$ are the linear forms defining the facet hyperplanes of $P$, and $\alpha_P(x)$ is the \emph{adjoint polynomial} of $P$, sometimes simply called the \emph{adjoint}. This is a polynomial of degree $k-n-1$ that vanishes on the residual arrangement of $P$. If $P$ is a polytope whose facet hyperplane arrangement is simple, then up to a scalar factor the adjoint is the unique polynomial with this property by \cite{KohnRanestad2019AdjCurves}*{Theorem 1}. The hypersurface defined by the adjoint polynomial is called the \emph{adjoint hypersurface}. 

The primary application of positive geometries is in particle physics, where they are used to compute scattering amplitudes. These fundamental quantities encode the probability that a certain particle scattering process occurs. Already polytopes are of relevance for physics. A central example is the \emph{ABHY associahedron}\footnote{The term ABHY refers to the initials of the authors Arkani-Hamed, Bai, He, and Yan of [2].}, introduced in \cite{arkani2018scattering}. The canonical form of this polytope encodes the tree-level planar bi-adjoint scalar $n$-point amplitude in $\phi^3$ quantum field theory.

While using positive geometries conceptually simplifies the computation of scattering amplitudes, computing the canonical form can still be quite involved, and this approach potentially obscures some of the non-obvious structure of the amplitude. It is clear that the complexity is concentrated in the numerator of the canonical function, namely in the adjoint polynomial: the remaining parts are known from a description of the polytope by inequalities. It is therefore a, both mathematically and physically, interesting question to investigate the structure of adjoints. 
In the physics reference \cite{arkani2024hidden}*{Section 8} the authors raise the question of whether adjoints have determinantal representations, that is, can be written as determinants of matrices of linear forms. In this article we offer answers to this question. 

The question of whether a homogeneous polynomial can be written as the determinant of a square matrix with linear entries has been extensively studied in algebraic geometry and is closely related to the geometry of the hypersurface $X$ in $\pp^n$ that the polynomial defines, see e.g. \cite{Beauville2000DetHypSurf} for a modern treatment and overview. For example, if $n\geq4$ and $X$ is smooth, then such a determinantal representation does not exist \cite{Dolgachev2012CAG}*{Section 4.1.1}. By constructing a four-dimensional polytope that has a smooth adjoint hypersurface, we show that adjoints do in general not have a determinantal representation. In the plane, i.e., $n=2$, it is known that a determinantal representation always exists: this follows e.g. from \cite{EisenbudSchreyerWeyman2003UlrichBundles}*{Corollary 4.5}. However, we prove that adjoints of polygons admit a particularly nice one: it is real, symmetric, it is tridiagonal in the sense that its only non-zero entries are on the main diagonal or on the first super- or subdiagonal, and it is positive definite on the interior of the polygon. A simple count of parameters shows that a tridiagonal determinantal representation is rare even for planar curves and the definitness imposes serious restrictions on the topology of the real part of the curve \cite{heltonvinnikov}. Finally, smooth hypersurfaces in $\pp^3$ can have a determinantal representation but starting from degree four this happens only for a subset of measure zero: the existence of a determinantal representation for a smooth hypersurface $X$ of degree $d$ in $\pp^3$ is equivalent to the existence of a smooth curve $C\subseteq X$ of a certain degree and genus (depending on $d$) \cite{Beauville2000DetHypSurf}*{Proposition 6.2} and the Noether--Lefschetz theorem \cite{noetherlefschetz} implies that a very general surface of degree $d$ does not contain such a curve. We prove a sufficient criterion for the existence of a determinantal representation of a similar flavor that is tailored towards adjoints of polytopes: instead of a certain smooth curve, we require the existence of a certain line arrangement on $X$. We use it to show that adjoints of all three-dimensional polytopes with at most eight facets and a simple facet hyperplane arrangement admit a determinantal representation. This includes all such polytopes with a smooth adjoint.

We now give a detailed overview of the structure and main results of the article. In \Cref{sec:2} we give preliminaries on adjoint polynomials and hypersurfaces. In \Cref{sec:detprelim} we recall the basics of determinantal representations.

Since the existence of determinantal representations of a polynomial is connected to the (non-)smoothness of the hypersurface defined by it, in \Cref{sec:smooth} we investigate when the adjoint hypersurface of a polytope is smooth. We give a complete classification of three-dimensional polytopes with simple facet hyperplane arrangement that can have smooth adjoints in \Cref{thm:smooth3d}. In \Cref{thm:smoothness} we prove that in each dimension starting from three there are only finitely many combinatorial types of polytopes for which the adjoint can be smooth. In \Cref{ex:counter} we present  a four-dimensional polytope with a smooth adjoint hypersurface, showing that in dimension four and higher not every adjoint has a determinantal representation.

The counterexample from \Cref{sec:smooth} motivates us to concentrate our attention on low dimensions. In \Cref{sec:3} we study determinantal representations of adjoints of polygons in the plane. Our main result here is \Cref{thm:recursiveDetRep} stating that the adjoint polynomial of every polygon $P$ has a determinantal representation by a symmetric tridiagonal matrix of linear forms that possesses additional recursive structure: its subdeterminants are adjoints of subpolygons of $P$. We note that this result can be used to construct the entries of the determinantal representation of $\alpha_P(x)$ (up to scaling of each entry) without knowing the polynomial beforehand, see \Cref{rem:detRepExpl}.

In \Cref{sec:4} we turn to the three-dimensional case. Our key instrument here is what we call \emph{nice arrangements of lines} (\Cref{def:nicearrangements}). In \Cref{thm:nicearrangementsdetrep} we prove a general result showing that if an algebraic surface in $\mathbb{P}^3$ of degree $D$ contains a nice line arrangement for degree $D$, then the defining polynomial of this surface has a determinantal representation that can be constructed explicitly. We use this result to show that the adjoints of all three-dimensional polytopes with at most eight facets and a simple facet hyperplane arrangement have a determinantal representation in \Cref{cor:smoothdetrep}. We note that the result can also be applied in the case of arbitrary polytopes, without the simplicity constraint on the facet hyperplane arrangement.  

Finally, \Cref{sec:6} is devoted to our motivating example from physics, the ABHY associahedron. We construct simple determinantal representations of universal (as defined in \cite{telen2025toric}) adjoints of two- and three-dimensional ABHY associahedra in \Cref{ex:3dassoc}. In \Cref{thm:noavrep} we prove a negative result showing that in dimension at least four a similarly structured determinantal representation does not exist. Therefore, while we do not rule out that in this higher-dimensional case some determinantal representation of the universal adjoint exists, we are skeptical that such a representation can have an illuminating structure. 

\section{Preliminaries on Adjoints} \label{sec:2}
This article studies determinantal representations of a special class of hypersurfaces. In this section, we introduce these hypersurfaces.
We work in the complex projective space $\mathbb{P}^n$. A \emph{projective polytope} $P\subseteq \mathbb{P}^n$ is a convex full-dimensional polytope inside the real points of some affine chart of $\mathbb{P}^n$. In what follows, for brevity we omit the word ``projective''. Let $\mathcal{H}_P$ denote the facet hyperplane arrangement, i.e., the hyperplane arrangement in $\mathbb{P}^n$ obtained by taking the Zariski closures of all the facets of $P$. We denote the set of vertices of $P$ by $V(P)$. We will also need the following definition. 

\begin{Def}[Residual arrangements]
    The \emph{residual arrangement} $\cR(P)$ of a polytope $P\subseteq \mathbb{P}^n$ is the collection of all intersections of hyperplanes in $\mathcal{H}_P$ that do not contain a face of $P$.
\end{Def}

Let $P$ be a polytope in $\mathbb{P}^n$. Fix an affine chart containing $P$ with coordinates $t_1,\ldots,t_n$. In \cite{warren1996barycentric} Warren defined the \emph{adjoint polynomial} $\mathrm{adj}_P$ of $P$ by fixing a triangulation $\tau(P)$ of $P$ and setting
$$\mathrm{adj}_P(t) = \sum\limits_{\sigma\in\tau(P)}\mathrm{vol}(\sigma)\prod\limits_{v\in V(P)\smallsetminus V(\sigma)}\ell_v(t),$$
where $\ell_v(t) = 1-\sum\limits_{i=1}^n v_it_i$. Warren also showed that this polynomial is independent of the choice of the triangulation $\tau(P)$. Note that this polynomial can be naturally homogenized by replacing the constant term $1$ with $t_0$ in each linear polynomial $\ell_v(t)$. In what follows, $\mathrm{adj}_P$ is this homogenized polynomial.  

In \cite{KohnRanestad2019AdjCurves}*{Proposition 2} Kohn and Ranestad showed that for any polytope $P$ the zero locus of the adjoint polynomial $\mathrm{adj}_{P^\circ}$ of the dual polytope $P^\circ$ contains the residual arrangement of $P$. Moreover, they showed that if $\mathcal{H}_P$ is a \emph{simple} arrangement of $k$ hyperplanes in $\mathbb{P}^n$ (that is, if the intersection of any $i$ hyperplanes in $\mathcal{H}_P$ has codimension $i$ for $i\leq n$ and is empty for $i>n$), the zero locus of $\mathrm{adj}_{P^\circ}$ is the \emph{unique} hypersurface of degree $k-n-1$ containing the residual arrangement of $P$ \cite{KohnRanestad2019AdjCurves}*{Theorem 1}.

\begin{rem}
    Note that if $\mathcal{H}_P$ is a simple arrangement, then $P$ is a simple polytope (that is, each of its vertices is contained in exactly dimension many facets) but the converse does not hold; for instance, the standard three-dimensional cube is a counterexample because four facet hyperplanes meet at a point at infinity. Another counterexample is given by the standard realization of the three-dimensional permutohedron where such bad points even exist in $\R^3$.
\end{rem}

From now on we will mostly consider polytopes $P$ such that $\mathcal{H}_P$ is a simple arrangement.

\begin{Def}[Adjoint hypersurfaces and polynomials]
    To avoid switching back and forth between $P$ and $P^\circ$, we define the \emph{adjoint hypersurface} $A_P$ of a polytope $P$ with $k$ facets and a simple facet hyperplane arrangement $\mathcal{H}_P$ to be the unique hypersurface of degree $k-n-1$ vanishing on the residual arrangement of $P$ (and not of $P^\circ$) and the \emph{adjoint polynomial} $\alpha_P$ to be the defining polynomial of $A_P$.
\end{Def}

Note that $\alpha_P$ is only defined up to a scalar factor. In terms of Warren's adjoint we then have (up to this scalar factor) $\alpha_P := \mathrm{adj}_{P^\circ}$. A formula for $\alpha_P$ in terms of the facet equations of $P$ is given in \cite{PavlovTelen2024PosGeom}*{Proposition 2.1}. This choice of definition is also natural from the point of positive geometries since the polynomial $\alpha_P$ is the numerator of the canonical from of $P$ \cite{gaetz2020positive}*{Theorem 3.2} (cf. \cite{Lam2022PosGeom}*{Theorem 5}). In this spirit, we define the adjoint polynomial $\alpha_P$ of an arbitrary convex polytope $P$ to be the numerator of its canonical form. Note that any polytope $P$ can be approximated by a sequence of polytopes with simple facet hyperplane arrangements. Then the adjoint of $P$ is the limit of the adjoints of the polytopes in the approximating sequence. This follows from \cite{KohnRanestad2019AdjCurves}*{Corollary 1}. 

\section{Preliminaries on Determinantal Representations} \label{sec:detprelim}
We now recall the basics of determinantal representations. 
\begin{Def}[Determinantal representations]\label{def:detrep}
    Let $f\in\C[x_1,\ldots,x_n]$ be a polynomial of degree $d$ and let $K\subseteq\C$ be a subfield. 
    \begin{enumerate}
        \item We say that a $d\times d$ matrix $M(x)$, whose entries are polynomials of degree at most one with coefficients in $K$, is a \emph{determinantal representation of $f$ over $K$} if $f=\det M(x)$. If we do not specify the field $K$, then we mean a determinantal representation over $\C$.
        \item A determinantal representation $M(x)$ is called \emph{symmetric} if $M(x)$ is a symmetric matrix.
        \item A determinantal representation $M(x)$ is called \emph{homogeneous} if $M(0)=0$.
        \item A determinantal representation $M(x)$ over $\R$ is called \emph{definite} (at $a\in\R^n$) if it is symmetric and $M(a)$ is positive or negative definite.
    \end{enumerate}
\end{Def}
\begin{rem}\label{rem:detrep}
Let $f\in\C[x_1,\ldots,x_n]$ be a polynomial of degree $d$.
\begin{enumerate}
    \item Let $M(x)=M_0+M_1(x)$ be a determinantal representation of $f$ where $M_0$ is a scalar matrix and $M_1(x)$ has homogeneous linear forms as entries. The matrix $x_0\cdot M_0+M(x)$ is a  determinantal representation of the \emph{homogenization} $f^{\rm h}=x_0^d\cdot f(\frac{x_1}{x_0},\cdots,\frac{x_n}{x_0})$ of $f$.  If $f$ is homogeneous, then $M_1(x)$ is a determinantal representation of $f$. Therefore, if a homogeneous polynomial has a determinantal representation, then it also has a homogeneous one.
    \item Conversely, if $M(x_0,\ldots,x_n)$ is a determinantal representation of $f^{\rm h}$, then $M(1,x_1,\ldots,x_n)$ is a determinantal representation of $f$.
    \item There always exists a matrix $N(x)$ of some size $e\gg d$ with polynomials of degree at most one as entries such that $f=\det N(x)$ \cite{valiant}. Note that the homogenization of $N(x)$ as above is not a determinantal representation of the homogenization $f^{\rm h}$ itself but rather of $x_0^{e-d}\cdot f^{\rm h}$. 
\end{enumerate}    
\end{rem}
By the previous remark it suffices to consider homogeneous polynomials.
The existence of a determinantal representation of a homogeneous polynomial $f\in\C[x_0,\ldots,x_n]$ is closely related to the geometry of its zero set $\cV(f)\subseteq\pp^n$ in complex projective space $\pp^n=\pp^n(\C)$. Before we describe the general theory, we start with the well-known and instructive case of quadratic polynomials for which we provide an elementary proof.

\begin{prop}\label{prop:quadratic}
    Let $X\subseteq\pp^n$ be a quadratic hypersurface cut out by the homogeneous quadratic polynomial $f\in\C[x_0,\ldots,x_n]$. The following are equivalent:
    \begin{enumerate}
        \item $f$ has a determinantal representation;
        \item $X$ contains a linear subspace of dimension $n-2$. 
    \end{enumerate}
    Furthermore, if $f$ has a determinantal representation and $n\geq4$, then $X$ is singular.
\end{prop}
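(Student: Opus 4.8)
The plan is to prove the equivalence (1) $\Leftrightarrow$ (2) together with the singularity claim by reducing everything to the classification of symmetric quadratic forms over $\C$, i.e.\ to the rank of $f$.

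\medskip

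\emph{Setup via rank.} Write $f = x^\top Q x$ for a symmetric $(n+1)\times(n+1)$ matrix $Q$ over $\C$, and let $r = \rank Q$. After a linear change of coordinates we may assume $f = x_0^2 + x_1^2 + \cdots + x_{r-1}^2$. I would first record the two classical facts I intend to use repeatedly: (a) the singular locus of $X = \cV(f)$ is exactly the projectivized kernel of $Q$, a linear subspace of dimension $n-r$, so $X$ is smooth precisely when $r = n+1$; and (b) the maximal dimension of a linear subspace contained in $X$ is $n - \lceil r/2 \rceil$ (a standard computation: a smooth quadric of rank $r$, viewed in its $\pp^{r-1}$, contains linear spaces of dimension at most $\lceil r/2\rceil - 1$, and one adds the $(n-r)$-dimensional vertex). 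These two facts turn both conditions into inequalities on $r$.

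\medskip

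\emph{The equivalence.} For (2), $X$ contains a linear subspace of dimension $n-2$ iff $n - \lceil r/2\rceil \geq n-2$, i.e.\ iff $\lceil r/2\rceil \leq 2$, i.e.\ iff $r \leq 4$. For (1), I would argue directly that $f$ has a determinantal representation iff $r \leq 4$. The key observation is that a $2\times 2$ determinant $\det\!\begin{pmatrix} a & b \\ c & d\end{pmatrix} = ad - bc$ of linear forms is a quadratic form of rank at most $4$ (its matrix is built from two hyperbolic-type blocks), so any $d\times d = 2\times 2$ linear determinant produces a quadric of rank $\leq 4$; conversely, for $r \leq 4$ I can write down the representation explicitly. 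For $r=2$: $x_0^2 + x_1^2 = (x_0 + ix_1)(x_0 - ix_1) = \det\!\begin{pmatrix} x_0 + i x_1 & 0 \\ 0 & x_0 - i x_1\end{pmatrix}$, and for $r=4$: after completing squares, $x_0^2 + x_1^2 + x_2^2 + x_3^2 = \det\!\begin{pmatrix} x_0 + i x_1 & x_2 + i x_3 \\ -x_2 + i x_3 & x_0 - i x_1\end{pmatrix}$ (the rank-$3$ case being a specialization). So (1), (2), and the condition $r \leq 4$ are all equivalent.

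\medskip

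\emph{The singularity claim.} With the equivalence in hand, the ``furthermore'' is immediate: if $f$ has a determinantal representation and $n \geq 4$, then $r \leq 4 < n+1$, so by fact (a) the kernel of $Q$ has projective dimension $n - r \geq 0$ and $X$ is singular. I would also note the converse direction of the equivalence here is exactly the statement that a smooth quadric in $\pp^n$ for $n\geq 4$ has $r = n+1 \geq 5 > 4$ and hence no determinantal representation, which is the instructive special case of the general theory the section is leading up to.

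\medskip

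\emph{Main obstacle.} The routine parts are the change of coordinates and writing down the explicit $2\times 2$ representations. The step that needs genuine care is the claim that a $2 \times 2$ linear determinant can only produce a quadric of rank at most $4$ --- equivalently, that quadrics of rank $\geq 5$ admit no determinantal representation. One clean way is to compute the symmetric matrix of the quadratic form $\det M(x) = M_{11}M_{22} - M_{12}M_{21}$ directly in terms of the coefficient vectors of the four linear entries and observe it is a sum of at most four rank-one symmetric pieces; I expect this bookkeeping, rather than any conceptual difficulty, to be where the argument must be pinned down precisely.
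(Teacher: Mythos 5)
Your proof is correct, and it organizes the argument somewhat differently from the paper: you route everything through the single invariant $r=\rank(f)$, proving (i) $\Leftrightarrow$ ($r\leq 4$) $\Leftrightarrow$ (ii). For (ii)$\Rightarrow$(i) and for the singularity claim your argument and the paper's coincide in substance: both rest on the isotropic-subspace dimension formula $\dim(V_0)+\lfloor r/2\rfloor$ and on an explicit $2\times2$ matrix for a rank-four quadric, and the paper's observation that every point with $p_0=\cdots=p_3=0$ is singular is just your kernel description of the singular locus in coordinates. The genuine difference is in (i)$\Rightarrow$(ii): the paper observes directly that if $f=\det M$ then $\cV(M_{11},M_{12})\subseteq X$ is a linear subspace of dimension at least $n-2$ --- one line, no rank theory needed --- whereas you instead prove that any $2\times2$ determinant of linear forms has rank at most $4$ (via $ad=\tfrac14\left((a+d)^2-(a-d)^2\right)$ and subadditivity of rank) and then convert the rank bound back into a linear subspace using your fact (b). Your route costs the extra lemma you flagged, but it buys a slightly sharper intermediate statement (a determinantal quadric has rank at most $4$, not merely contains an $(n-2)$-plane); the two are of course equivalent, and both proofs are complete.

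One small correction to your sketch of fact (b): you say a smooth quadric of rank $r$, viewed in its $\pp^{r-1}$, contains linear spaces of dimension at most $\lceil r/2\rceil-1$; the correct bound is $\lfloor r/2\rfloor-1$ (for $r=3$, a smooth conic contains points but no lines). With the floor, joining with the $(n-r)$-dimensional vertex gives $(n-r)+(\lfloor r/2\rfloor-1)+1=n-\lceil r/2\rceil$, which is exactly the (correct) formula you state and use afterwards; with the ceiling, the join would instead give $n-\lfloor r/2\rfloor$. So the slip is confined to the parenthetical justification and does not propagate into the rest of the argument.
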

\begin{proof}
    For the direction ``$(i)\Rightarrow(ii)$'' let
    \begin{equation*}
        M(x)=\left(\begin{matrix}
            l_{11}(x)& l_{12}(x)\\ l_{21}(x)& l_{22}(x)
        \end{matrix}\right)
    \end{equation*}
    where the $l_{ij}(x)$ are linear forms such that
    \begin{equation}\label{eq:quadraticexpansion}
        f=\left(l_{11}(x)\cdot l_{22}(x) - l_{12}(x)\cdot l_{21}(x)\right).
    \end{equation}
    Then $X$ contains the linear subspace $\cV(l_{11},l_{12})$ which has dimension at least $n-2$. For the direction ``$(ii)\Rightarrow(i)$'' we recall that the maximal dimension of an isotropic subspace of $\C^{n+1}$, i.e., a subspace on which $f$ vanishes entirely, is $\dim(V_0)+\left\lfloor\frac{\rank(f)}{2}\right\rfloor$ where $V_0$ is the degenerate space of $f$. Our assumption implies that
    \begin{equation*}
        n-1\leq\dim(V_0)+\left\lfloor\frac{\rank(f)}{2}\right\rfloor=\dim(V_0)+\rank(f)-\left\lceil\frac{\rank(f)}{2}\right\rceil=n+1-\left\lceil\frac{\rank(f)}{2}\right\rceil.
    \end{equation*}
    Here the first inequality holds because if $X$ contains a linear subspace of $\pp^n$ of dimension $n-2$, then the corresponding subspace of $\C^{n+1}$ of dimension $n-1$ is isotropic with respect to $f$. The resulting inequality
    \begin{equation*}
        n-1\leq n+1-\left\lceil\frac{\rank(f)}{2}\right\rceil
    \end{equation*}
    shows that $\rank(f)\leq 4$ and thus, after a linear change of coordinates, that
    \begin{equation*}
        f=x_0^2+x_1^2-x_2^2-x_3^2
    \end{equation*}
    and we can write
    \begin{equation*}
        f=\det\left(\begin{matrix}
            x_0+x_2& x_1+x_3\\ -x_1+x_3& x_0-x_2
        \end{matrix}\right)
    \end{equation*}
    which shows $(i)$. We further note that if $n\geq4$, then every point $(p_0:\cdots:p_n)\in\pp^n$ with $p_0=\cdots=p_3=0$ is in the singular locus of $X$.  
\end{proof}

\subsection{Plane hyperbolic curves}
It was shown by Helton and Vinnikov \cite{heltonvinnikov} that a homogeneous polynomial $0\neq f\in\R[x_0,x_1,x_2]$ has (up to sign) a definite determinantal representation if and only if it is hyperbolic in the following sense.
\begin{Def}[Hyperbolic polynomials]
 A homogeneous real polynomial $f\in\R[x_1,\ldots,x_n]$ is called \emph{hyperbolic} with respect to $e\in\R^n$ if for every $a\in\R^n$ the univariate polynomial $f(te-a)$ has only zeros in $\R$.
\end{Def}
In the following we describe a criterion due to Plaumann and Vinzant \cite{PlaumannVinzantHyp2013LDR} for a symmetric determinantal representation to be definite. For this we need some preparation.
Recall that the complement of an embedded circle $S$ in the real projective plane $\pp^2(\R)$ consists of either one or two connected components. In the former case, we call $S$ a \emph{pseudoline} and in the latter case an \emph{oval}. The complement of an oval has exactly one connected component that is homeomorphic to an open disc. We call it the \emph{interior} of the oval $S$. The other connected component is called the \emph{exterior} of $S$. The topology of the real projective zero set of ternary hyperbolic polynomials can be described as follows.
\begin{prop}[\cite{heltonvinnikov}*{Section 5}] \label{prop:hyperbRealLocus}
 Let $f\in\R[x_0,x_1,x_2]$ be hyperbolic with respect to $e\in\R^3$ and assume that no singularity of $X=\cV(f)\subseteq\pp^2$ lies in the real locus $X(\R)$.
 \begin{enumerate}
  \item If $\deg(f)=2m$, then $X(\R)$ has $m$ connected components $O_1,\ldots,O_m$. Each of them is an oval and $O_i$ is contained in the interior of $O_{i+1}$ for $i=1,\ldots,m-1$.
  \item If $\deg(f)=2m+1$, then $X(\R)$ has $m+1$ connected components, namely $m$ ovals $O_1,\ldots,O_m$ and one pseudoline $P$. Each $O_i$ is contained in the interior of $O_{i+1}$ for $i=1,\ldots,m-1$. The pseudoline $P$ is contained in the exterior of $O_m$.
 \end{enumerate}
 Moreover, if $\deg(f)\geq2$, the point $[e]\in\pp^2(\R)$ is contained in the interior of $O_1$ which is called the \emph{innermost oval}.
 In both cases,  the polynomial $f$ is called \emph{strictly hyperbolic} with respect to $e$.
\end{prop}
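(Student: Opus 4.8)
The plan is to translate the algebraic hyperbolicity condition into a statement about a pencil of lines and then read off the topology of $X(\R)$ from an intersection-theoretic count. First I would normalize so that $p:=[e]\notin X(\R)$, i.e. $f(e)\neq 0$, as is customary for (strict) hyperbolicity. The pencil of real projective lines through $p$ is parametrized by $\pp^1(\R)\cong S^1$, and for a line $L$ in this pencil the restriction $f|_L$ is a binary form of degree $d=\deg f$ whose roots are exactly $L\cap X$. The key reformulation is: \emph{$f$ is hyperbolic with respect to $e$ if and only if, for every line $L$ through $p$, all $d$ roots of $f|_L$ are real}, so that $L\cap X=L\cap X(\R)$ consists of $d$ points counted with multiplicity. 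Since no singularity of $X$ lies on $X(\R)$, the real locus $X(\R)$ is a compact smooth $1$-manifold, hence a finite disjoint union of circles, each of which is an oval or a pseudoline as recalled above.

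Next I would exploit the projection $\pi\colon X(\R)\to\pp^1(\R)$, $q\mapsto\overline{pq}$, which is well defined because $p\notin X(\R)$. Two tools control its fibers. The first is mod-$2$ intersection theory in $\pp^2(\R)$: since $\Ho_1(\pp^2(\R);\Z/2)\cong\Z/2$ carries a nondegenerate intersection form, a line meets an oval (a contractible loop) in an even number of points and a pseudoline (the nonzero class) in an odd number. Because every line of the pencil meets $X(\R)$ in $d$ points, reducing modulo $2$ shows that the number of pseudolines is congruent to $d$; moreover two disjoint pseudolines would have mod-$2$ intersection number $1$ and hence be forced to meet, which is impossible for distinct components. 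Thus there are no pseudolines if $d$ is even and exactly one if $d$ is odd.

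The heart of the argument, and the step I expect to be the main obstacle, is the genuinely hyperbolic input that forbids real intersection points from escaping to complex ones as $L$ rotates. Concretely, I would show that every oval must contain $p$ in its interior: if some oval $O$ had $p$ in its exterior, then rotating $L$ through a tangent line from $p$ to $O$ would merge two real intersection points into a double point and then split them into a complex-conjugate pair on the side where $L$ misses $O$, producing non-real roots of $f|_L$ and contradicting hyperbolicity. The same collision-and-escape mechanism rules out a line meeting an oval in four or more points, so each oval meets every line of the pencil in exactly two points, while the pseudoline, if present, meets every line exactly once. Combined with the constant fiber count $d=2m$ or $d=2m+1$, this forces exactly $m$ ovals. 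Since all ovals contain the common interior point $p$ and are pairwise disjoint, the Jordan curve theorem orders their interiors by inclusion, giving the nested chain $O_1\subset\cdots\subset O_m$; in particular $[e]=p$ lies in the interior of the innermost oval $O_1$ whenever $m\ge 1$, i.e. when $\deg f\ge 2$. Finally, a pseudoline is noncontractible and hence cannot lie in the disk bounded by $O_m$, so being a separate component it lies in the exterior of $O_m$, completing the odd case. The delicate point throughout is making the ``escape to complex'' continuity argument rigorous while accounting for possible higher-order tangencies; invoking the smoothness of $X(\R)$ to guarantee that the critical lines of $\pi$ are isolated and correspond to honest fold-type tangencies is what keeps this manageable.
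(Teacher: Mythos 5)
The paper does not actually prove this proposition: it is quoted verbatim from Helton--Vinnikov (their Section 5), so your attempt can only be measured against the classical pencil-of-lines argument, which is indeed the route you chose. Much of your skeleton is sound: the normalization $f(e)\neq 0$, the reformulation of hyperbolicity as ``every line through $p=[e]$ meets $X$ in $d$ real points counted with multiplicity'', the mod-$2$ homology argument giving at most one pseudoline whose presence matches the parity of $d$, the Jordan-curve nesting of disjoint ovals that all contain $p$, and the placement of the pseudoline outside all ovals are all correct.

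The gaps sit exactly where you predicted, in step 5, and two of them are genuine. First, the claim that the ``collision-and-escape mechanism rules out a line meeting an oval in four or more points'' does not work as stated: if every line through $p$ met an oval $O$ transversally in exactly four points, no collision would ever occur and the mechanism is vacuous, yet this is precisely the scenario you must exclude to get the count of $m$ ovals (and likewise to show the pseudoline meets each line exactly once). What excludes it is homological, not dynamical: the projection $\pi\colon\pp^2(\R)\smallsetminus\{p\}\to\pp^1(\R)$ is a homotopy equivalence (its fibers, lines minus a point, are contractible), an embedded oval has class $0$ or $\pm2$ in $H_1(\pp^2(\R)\smallsetminus\{p\})\cong\Z$ (excise a small disk around $p$ from the interior disk of $O$ to get a cobordism between $O$ and a small circle around $p$, which has class $\pm2$), and a pseudoline has class $\pm1$; hence once tangent lines from $p$ are excluded, $\pi|_O$ is a covering map of degree $\pm2$ and every fiber has exactly two points. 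Second, your argument that an oval with $p$ in its exterior admits a tangent line from $p$ tacitly assumes some line through $p$ misses $O$; this can fail (a horseshoe-shaped oval wrapped around $p$ has $p$ in its exterior yet meets every line through $p$), so the existence of the tangent line again needs the degree argument: $p$ exterior forces $\deg(\pi|_O)=0$, while a critical-point-free map between compact curves would be a covering of nonzero degree. Third, your proposed repair for higher-order tangencies is wrong --- smoothness of $X(\R)$ does not make all tangencies from $p$ fold-type, since smooth real curves have inflectional tangents --- but this gap is harmless: at a tangency of local intersection multiplicity $m\geq2$, lines of the pencil on at least one side have non-real intersections near the point of tangency (for $m$ even one side loses both real branches, for $m$ odd both sides retain only one of $m$), contradicting hyperbolicity in every case. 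The statement that repairs all three points at once, and which is essentially the Helton--Vinnikov argument, is that hyperbolicity together with $f(e)\neq0$ and smoothness of $X(\R)$ forces $\pi|_{X(\R)}\colon X(\R)\to\pp^1(\R)$ to be a covering map with exactly $d$ sheets; ordering the $d$ points of each fiber along the affine line $L\smallsetminus\{p\}\cong\R$ and noting that the monodromy of this (M\"obius) line bundle reverses the order, one sees that the components of $X(\R)$ are exactly the orbits $\{j,d+1-j\}$ of the order-reversal, i.e.\ $\lfloor d/2\rfloor$ ovals plus one pseudoline when $d$ is odd, with the count, the nesting, the position of $[e]$ in the innermost oval, and the position of the pseudoline all falling out of this ordering simultaneously.
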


\begin{Def}[Interlacers]
    Let $f,g\in\R[x_0,x_1,x_2]$ be strictly hyperbolic with respect to $e$ and $d=\deg(f)=\deg(g)+1\geq2$. Let $X=\cV(f)$ and $Y=\cV(g)$. We say that $g$ is an \emph{interlacer} of $f$, or $g$ \emph{interlaces} $f$, if the closure of every connected component of $\pp^2(\R)\smallsetminus X(\R)$, except for the interior of the innermost oval of $X(\R)$, contains exactly one connected component of $Y(\R)$.
\end{Def}

Now we can formulate the criterion for definiteness.
\begin{thm}[\cite{PlaumannVinzantHyp2013LDR}*{Theorem 3.3}]\label{thm:interlmakesdef}
    Let $f,g\in\R[x_0,x_1,x_2]$ be strictly hyperbolic with respect to $e$ and $d=\deg(f)=\deg(g)+1\geq2$. Assume further that $X=\cV(f)\subseteq\pp^2$ is smooth and that $g$ interlaces $f$. Let $M$ be a symmetric determinantal representation of $f$ over $\R$ such that $g$ is the determinant of the top-left $(d-1)\times(d-1)$ submatrix of $M$. Then $M$ is a definite determinantal representation of $f$.
\end{thm}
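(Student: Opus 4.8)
\emph{The plan is} to reduce the statement to a one-dimensional problem about a symmetric matrix pencil and then exploit Cauchy interlacing. Since $[e]$ lies in the interior of the innermost oval of $X(\R)$ by \Cref{prop:hyperbRealLocus}, we have $f(e)\neq0$, so the symmetric matrix $M(e)$ is invertible, and definiteness of $M$ at $e$ is exactly the assertion that all of its eigenvalues share one sign. I would fix a \emph{generic} real point $[u]$ and restrict $M$ to the projective line $\ell$ through $[e]$ and $[u]$, parametrising $\ell$ by $s\mapsto[se+u]$ so that $[e]$ corresponds to $s=\infty$ and $M(e)$ becomes the \emph{leading} coefficient of the symmetric pencil $\tilde M(s)=sM(e)+M(u)$. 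By linearity of $M$ one then has $\det\tilde M(s)=f(se+u)$, a degree-$d$ polynomial with leading coefficient $f(e)$, while the top-left $(d-1)\times(d-1)$ submatrix satisfies $\det\tilde M'(s)=g(se+u)$.

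Next I would record the root pattern forced by the hypotheses. Strict hyperbolicity of $f$ and $g$ with respect to $e$ guarantees that, for generic $u$, the polynomial $f(se+u)$ has $d$ real roots $t_1<\dots<t_d$ and $g(se+u)$ has $d-1$ real roots; smoothness of $X$ together with genericity of $\ell$ makes all these roots simple and ensures that $\ell$ avoids the singular locus of $Y=\cV(g)$ and the finite set $X\cap Y$, so that no two of the roots coincide. The geometric interlacing hypothesis then translates into a clean combinatorial statement: the arc of $\ell$ lying in the interior of the innermost oval is precisely the arc through $s=\infty$, and by the definition of an interlacer this is the unique arc of $\ell\smallsetminus X$ containing no point of $Y$, while every other arc contains exactly one. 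Hence all $d-1$ roots $\tau_j$ of $g(se+u)$ lie in $(t_1,t_d)$ and \emph{strictly interlace}: $t_1<\tau_1<t_2<\dots<\tau_{d-1}<t_d$.

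The heart of the argument is a sign count for the pencil. For $s$ not a root of $f(se+u)$ let $N(s)$ and $N'(s)$ denote the numbers of negative eigenvalues of $\tilde M(s)$ and $\tilde M'(s)$. Because $\tilde M'$ is a \emph{principal} submatrix of $\tilde M$, Cauchy interlacing gives $N(s)-N'(s)\in\{0,1\}$ for all such $s$. As $s$ increases, $N$ changes by exactly $\pm1$ at each $t_i$ (here simplicity of the roots, hence smoothness of $X$, is essential: a single eigenvalue crosses zero transversally), and $N'$ changes by $\pm1$ at each $\tau_j$. Since the $t_i$ and $\tau_j$ strictly alternate, at every jump the integer $N-N'$ changes by $\pm1$; being confined to $\{0,1\}$, it must flip between $0$ and $1$ at each jump. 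Consequently $N-N'$ takes the same value immediately before each $t_i$, so all $d$ jumps of $N$ at the $t_i$ have the \emph{same} sign $\varepsilon\in\{\pm1\}$. Comparing the endpoints, where $\tilde M(s)\sim sM(e)$ gives $N(-\infty)=\#\{\text{positive eigenvalues of }M(e)\}$ and $N(+\infty)=\#\{\text{negative eigenvalues of }M(e)\}$, we obtain $N(+\infty)-N(-\infty)=\varepsilon d=\pm d$. As the two counts sum to $d$, this forces $M(e)$ to be positive or negative definite, as desired.

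\emph{The main obstacle} is the second paragraph: converting the global, topological interlacer hypothesis into the precise univariate pattern $t_1<\tau_1<\dots<\tau_{d-1}<t_d$ on a single generic line through $e$. This requires the nested-oval description of \Cref{prop:hyperbRealLocus}, a transversality argument guaranteeing that a generic line meets the relevant components of $X$ and $Y$ in the expected number of simple real points while avoiding the singularities of $Y$ and the finite set $X\cap Y$, and the observation that the innermost region is the arc through $s=\infty$. Once this root pattern is established, the Cauchy-interlacing sign count is essentially forced; the only other use of smoothness of $X$ is to ensure that each $t_i$ is a simple root, so that $N$ really does change by single steps.
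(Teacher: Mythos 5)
The paper does not actually prove this statement: it is imported verbatim from \cite{PlaumannVinzantHyp2013LDR}*{Theorem 3.3}, so your attempt can only be judged against the cited source and on its own merits. Your overall route --- restrict the symmetric pencil $\tilde M(s)=sM(e)+M(u)$ to a generic line through $[e]$, then run a sign count via Cauchy interlacing --- is the classical one, and its linear-algebraic core is correct and correctly executed: once the roots $t_1<\cdots<t_d$ of $f(se+u)$ and $\tau_1<\cdots<\tau_{d-1}$ of $g(se+u)$ are known to be simple and strictly alternating, $N-N'$ is confined to $\{0,1\}$ by Cauchy interlacing and must flip at each of the $2d-1$ jump points, so all $d$ jumps of $N$ carry a common sign $\varepsilon$, and the endpoint identities $N(+\infty)+N(-\infty)=d$, $N(+\infty)-N(-\infty)=\varepsilon d$ force $M(e)$ to be definite. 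Your placement of the smoothness hypothesis (simple roots, hence a single transversal eigenvalue crossing at each $t_i$) is also right.

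The genuine gap is the step you yourself label ``the main obstacle'' and then only describe rather than prove: deducing the strict alternation $t_1<\tau_1<t_2<\cdots<\tau_{d-1}<t_d$ from this paper's \emph{topological} definition of an interlacer. That definition counts connected components of $Y(\R)$ inside closures of components of $\pp^2(\R)\smallsetminus X(\R)$; it says nothing directly about intersection points of $Y$ with a line, and your claim that ``by the definition of an interlacer'' every finite arc of $\ell\smallsetminus X$ contains exactly one point of $Y$ is precisely what needs an argument. Concretely, one must show: (a) no component of $Y(\R)$ lies in the interior of the innermost oval of $X(\R)$ --- a counting argument that uses that components of $Y(\R)$ are one-dimensional (strict hyperbolicity of $g$) and that $X\cap Y$ is finite (which in turn uses that $X$, being a smooth plane curve, is irreducible and $\deg g<\deg f$); (b) an oval of $Y(\R)$ lying in the closure of an annular component between consecutive ovals $O_i,O_{i+1}$ of $X(\R)$ meets $\ell$ exactly once on each side of $[e]$, i.e.\ one must exclude that both of its intersection points with $\ell$ fall into the same interval; this uses that $[e]$ lies in the interior of every oval of $Y(\R)$ (\Cref{prop:hyperbRealLocus} applied to $g$), so the arc of $\ell$ through $[e]$ bounded by those two points lies inside that $Y$-oval, whereas if both points sat in one interval this arc would also meet the exterior of $O_{i+1}$, which is disjoint from the $Y$-oval's interior (itself a small topological argument, e.g.\ the Möbius-band exterior of an oval cannot embed in an annulus); and (c) in the odd-degree case the pseudoline of $Y(\R)$ must sit in the outermost component, since a non-contractible curve in $\pp^2(\R)$ cannot lie in an annular component. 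None of (a)--(c) is carried out in your proposal, and without them the root pattern on which the entire sign count rests is unproven. These three points are exactly the dictionary between the topological interlacing notion used in this paper and the univariate root-interlacing notion under which the quoted theorem of Plaumann--Vinzant is naturally proved; once they are established, your Cauchy-interlacing count (or a direct appeal to the cited theorem) completes the proof.
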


\subsection{Divisors}
Some proofs will need the notion of \emph{divisors} from algebraic geometry. We refer to \cite{Hartshorne1977AG}*{Section II.6} as a general reference.
\begin{Def}[Divisors]\label{def:divisors}
    Let $X$ be an irreducible projective variety over $\C$ whose singular locus has codimension at least two. A \emph{prime divisor} on $X$ is a closed irreducible subvariety of codimension one. A \emph{(Weil) divisor} is an element of the free abelian group $\Div(X)$ generated by the prime divisors, i.e., a divisor is a formal sum $D=\sum n_i Y_i$ where the $Y_i$ are prime divisors and the $n_i$ are integers, only finitely many of which are non-zero. If all $n_i\geq0$, then $D$ is called \emph{effective}. This is denoted by $D\geq0$. Finally, the \emph{support} of $D$ is the set of prime divisors $Y_i$ for which $n_i\neq0$.
\end{Def}
\begin{ex}\label{ex:hyperplanedivisor}
Let $X$ be as in \Cref{def:divisors}.
\begin{enumerate}
    \item If $X$ is a curve, then the prime divisors on $X$ are exactly the points on $X$.
    \item If $X\subseteq\pp^n$, then every hypersurface $Y\subseteq\pp^n$ defines a divisor $Y.X$ on $X$ given by the sum of the irreducible components of $X\cap Y$ weighted by their multiplicities, and similarly a homogeneous polynomial $f$ defines the divisor $f.X$ on $X$.
\end{enumerate}
\end{ex}
We will use the following definition frequently in \Cref{sec:3}.
\begin{Def}[Contact curve]
    Let $X\subseteq\pp^2$ be a smooth curve. A curve $Y\subseteq\pp^2$ is called a \emph{contact curve} to $X$ with \emph{contact divisor} $D$ if the divisor $Y.X$ defined in \Cref{ex:hyperplanedivisor} is equal to $2D$ where $D$ is a divisor on $X$.
\end{Def}

\subsection{Determinantal representations and coherent sheaves}
The existence of a determinantal representation of a homogeneous polynomial $f$ is closely related to the existence of certain divisors on the hypersurface $\mathcal{V}(f)$. For a hypersurface $X$ in $\pp^3$ the precise condition is formulated in the following theorem in terms of the cohomology of the ideal sheaf of a curve on $X$. As a general reference for cohomology of coherent sheaves, see \cite{Hartshorne1977AG}*{Chapter III}. However, we actually recommend the reader less familiar with algebraic geometry to take \Cref{thm:idealsheafdetrep} and also \Cref{thm:nicearrangementsdetrep}, where we show that certain line arrangements satisfy the assumptions of \Cref{thm:idealsheafdetrep}, as a black box. Given a suitable curve, the determinantal representation can be easily computed from its vanishing ideal using the software \texttt{Macaulay2} \cite{M2}, see \Cref{rem:computedetrep}.
\begin{thm}\label{thm:idealsheafdetrep}
    Let $X\subseteq\pp^3$ be a hypersurface of degree $D$ with defining polynomial~$f$. Assume that there exists a closed subscheme  $C\subseteq X$ of dimension one whose ideal sheaf $\cI_C$ on $\pp^3$ satisfies the following conditions:
    \begin{align*}
        h^0(\pp^3,\cI_C(D-2))&=h^1(\pp^3,\cI_C(D-2))\\
        &=h^1(\pp^3,\cI_C(D-3))=0,\\
        h^2(\pp^3,\cI_C(D-3))&=h^3(\pp^3,\cI_C(D-3)).
    \end{align*}
    Then the free resolution of the sheaf $\cF=(\cI_C|_X)(D-1)$ has the form
    \begin{equation*}
    \begin{tikzcd}
        0\arrow{r}&\cO_{\pp^3}(-1)^D\arrow[r,"M"]&\cO_{\pp^3}^D\arrow{r}&\cF\arrow{r}&0
    \end{tikzcd}
    \end{equation*}
    where $M$ is a determinantal representation of $f$.    
    The ideal sheaf $\cI_C$ furthermore satisfies the following conditions:
     \begin{align*}
        h^0(\pp^3,\cI_C(k+D-1))&=0&\textrm{ for }k<0,\\
        h^1(\pp^3,\cI_C(k+D-1))&=0&\textrm{ for }k\in\Z,\\
        h^2(\pp^3,\cI_C(k+D-1))&=h^3(\pp^3,\cI_C(k+D-1))&\textrm{ for }k\geq-2,\\
        h^0(\pp^3,\cI_C(D-1))&=D.
    \end{align*}
\end{thm}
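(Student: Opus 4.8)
The plan is to show that the twisted ideal sheaf $\cF=(\cI_C|_X)(D-1)$ is an \emph{Ulrich sheaf} on $X$, i.e. that $H^\bullet(\pp^3,\cF(-1))=H^\bullet(\pp^3,\cF(-2))=0$, and then to invoke the standard correspondence between Ulrich sheaves and linear determinantal representations. The bridge between the cohomology of $\cF$ and the hypotheses, which are phrased in terms of $\cI_C$, is the sequence coming from $C\subseteq X$. Writing $\cI_X\cong\cO_{\pp^3}(-D)$ for the ideal sheaf of $X$ and $\cI_{C/X}=\cI_C/\cI_X$ for the ideal sheaf of $C$ inside $X$ (so that $\cF=\cI_{C/X}(D-1)$), twisting the defining sequence $0\to\cI_X\to\cI_C\to\cI_{C/X}\to0$ by $\cO_{\pp^3}(k+D-1)$ gives
\begin{equation}\label{eq:plan-star}
0\to\cO_{\pp^3}(k-1)\to\cI_C(k+D-1)\to\cF(k)\to0
\end{equation}
for every $k\in\Z$. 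Since $\cO_{\pp^3}(m)$ has no cohomology at all for $-3\le m\le-1$, the sequence \eqref{eq:plan-star} identifies $H^i(\pp^3,\cF(k))$ with $H^i(\pp^3,\cI_C(k+D-1))$ for $k\in\{-1,-2\}$.

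\textbf{Reduction to the Ulrich condition.} First I would treat the twist $-2$, where \eqref{eq:plan-star} gives $H^i(\cF(-2))\cong H^i(\cI_C(D-3))$. The hypothesis $h^1(\cI_C(D-3))=0$ handles $i=1$; the vanishing $h^0(\cI_C(D-3))=0$ follows from $h^0(\cI_C(D-2))=0$ because $h^0(\cI_C(m))$ is nondecreasing in $m$ (multiplication by a linear form is injective on the torsion-free sheaf $\cI_C$); and since $\cF$ is supported on the surface $X$ we have $h^3(\cF(-2))=0$, so the hypothesis $h^2(\cI_C(D-3))=h^3(\cI_C(D-3))$ forces both to vanish. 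Thus $H^\bullet(\cF(-2))=0$. At twist $-1$, \eqref{eq:plan-star} together with $h^0(\cI_C(D-2))=h^1(\cI_C(D-2))=0$ gives $h^0(\cF(-1))=h^1(\cF(-1))=0$, and $h^3(\cF(-1))=0$ as before. The only group not covered by \eqref{eq:plan-star} is $h^2(\cF(-1))$. To kill it I would pass to a general hyperplane $H=\cV(\ell)$: since $\cF$ is a subsheaf of the pure two-dimensional sheaf $\cO_X(D-1)$, a general linear form $\ell$ is a non-zero-divisor on $\cF$, yielding
\begin{equation*}
0\to\cF(-2)\xrightarrow{\ \ell\ }\cF(-1)\to(\cF|_H)(-1)\to0.
\end{equation*}
As $\cF|_H$ is supported on the curve $X\cap H$ we have $H^2((\cF|_H)(-1))=0$, and we have just shown $H^2(\cF(-2))=0$; hence $H^2(\cF(-1))=0$ and $\cF$ is Ulrich.

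\textbf{From Ulrich to the resolution and determinant.} Given the Ulrich property, the existence of the resolution $0\to\cO_{\pp^3}(-1)^D\xrightarrow{M}\cO_{\pp^3}^D\to\cF\to0$ is the standard cohomological characterization of Ulrich sheaves on a hypersurface (through the Beilinson monad; see \cite{EisenbudSchreyerWeyman2003UlrichBundles} and \cite{Beauville2000DetHypSurf}). Concretely, $H^\bullet(\cF(-1))=H^\bullet(\cF(-2))=0$ makes $\cF$ be $0$-regular and hence globally generated, a Hilbert-polynomial computation using $\chi(\cF(-1))=\chi(\cF(-2))=0$ gives $h^0(\cF)=\chi(\cF)=D$, producing a surjection $\cO_{\pp^3}^D\to\cF$, and the same vanishing forces the kernel to be $\cO_{\pp^3}(-1)^D$. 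The middle map $M$ is then a $D\times D$ matrix of linear forms whose cokernel $\cF$ is supported exactly on $X$, so the degree-$D$ form $\det M$ vanishes precisely on $X=\cV(f)$; hence $\det M$ is a nonzero scalar multiple of $f$ and $M$ is a determinantal representation. I expect this passage from the cohomological vanishing to the explicit two-term resolution to be the main obstacle, in the sense that it is the one step genuinely relying on external machinery rather than on the elementary sequence \eqref{eq:plan-star}.

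\textbf{The auxiliary cohomology of $\cI_C$.} Finally, the listed vanishings for $\cI_C$ are a bookkeeping consequence of the resolution. Twisting $0\to\cO_{\pp^3}(-1)^D\to\cO_{\pp^3}^D\to\cF\to0$ by $\cO_{\pp^3}(k)$ and taking cohomology yields $h^1(\cF(k))=0$ for all $k$, $h^0(\cF(k))=0$ for $k<0$ with $h^0(\cF)=D$, and $h^2(\cF(k))=0$ for $k\ge-2$ (while $h^3(\cF(k))=0$ always). Feeding these back into \eqref{eq:plan-star}, and using that $\cO_{\pp^3}(k-1)$ has vanishing $h^1$ for all $k$ and satisfies $h^0(\cO_{\pp^3}(k-1))=\chi(\cO_{\pp^3}(k-1))$ for $k\ge-2$, one reads off $h^1(\cI_C(k+D-1))=0$ for all $k$, $h^0(\cI_C(k+D-1))=0$ for $k<0$, $h^0(\cI_C(D-1))=D$, and, comparing Euler characteristics, $h^2(\cI_C(k+D-1))=h^3(\cI_C(k+D-1))$ for $k\ge-2$, as claimed.
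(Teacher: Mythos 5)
Your proposal is correct and its skeleton is the same as the paper's: you use the identical short exact sequence (the paper's multiplication-by-$f$ sequence $0\to\cO_{\pp^3}(-1)\to\cI_C(D-1)\to\cF\to0$, twisted by $k$), the identical identification of $H^i(\pp^3,\cF(k))$ with $H^i(\pp^3,\cI_C(k+D-1))$ at the twists $k=-1,-2$, the same appeal to \cite{EisenbudSchreyerWeyman2003UlrichBundles} for the passage from cohomological vanishing to the linear resolution, and the same bookkeeping to recover the auxiliary statements about $\cI_C$. The only genuine divergence is in the middle: the paper verifies just the four vanishings $h^0(\cF(-1))=h^1(\cF(-1))=h^1(\cF(-2))=h^2(\cF(-2))=0$, which are precisely condition (b) of \cite{EisenbudSchreyerWeyman2003UlrichBundles}*{Proposition 2.1}, and lets that proposition do everything else; you instead prove the full Ulrich condition $H^\bullet(\cF(-1))=H^\bullet(\cF(-2))=0$, which forces you to add the monotonicity argument for $h^0(\cI_C(D-3))$, Grothendieck vanishing for the $h^3$'s, and the general-hyperplane-section argument for $h^2(\cF(-1))=0$. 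These additions are all valid (purity of $\cO_X$ for a hypersurface guarantees that a general linear form is a non-zero-divisor on $\cF\subseteq\cO_X(D-1)$), and they buy a self-contained verification of Ulrichness, but they are unnecessary overhead relative to quoting the ESW criterion in the weaker form the paper uses. One small caveat inside the step you defer to external machinery: your inline argument that $\det M$ is a nonzero scalar multiple of $f$ because it ``vanishes precisely on $X$'' and has degree $D$ is incomplete when $f$ is non-reduced, since equal zero sets and equal degrees do not force proportionality; the clean argument is that $\det M$ annihilates $\coker(M)=\cF$ by Cramer's rule, that $\ann(\cF)=(f)$, and that $\deg\det M=D=\deg f$, so $\det M$ is a nonzero multiple of $f$. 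As the theorem is stated for an arbitrary degree-$D$ hypersurface, this refinement is worth making, but it does not affect the correctness of your overall plan.
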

\begin{rem}\label{rem:computedetrep}
    Given the polynomial $f$ and defining equations for $C$, one can easily compute the free resolution of $\cF$, and thus a determinantal representation of~$f$, using the computer algebra system \texttt{Macaulay2} \cite{M2}. We will provide code and give examples at the end of \Cref{sec:detreps3d}.
\end{rem}
\begin{rem}
    If $X$ is smooth, then it follows from \cite{Beauville2000DetHypSurf}*{Proposition 6.2} that a projectively normal smooth curve $C$ of degree $\frac{1}{2}D(D-1)$ and genus $\frac{1}{6}(D-2)(D-3)(2D+1)$ satisfies the conditions of \Cref{thm:idealsheafdetrep}, and, moreover, the existence of such a curve on $X$ is also necessary for the existence of a determinantal representation of $f$.
\end{rem}
\begin{proof}
    Letting $\cF=(\cI_C|_X)(D-1)$ we obtain the short exact sequence
    \begin{equation*}
        0\to\cO_{\pp^3}(-1)\to\cI_C(D-1)\to\cF\to0.
    \end{equation*}
where the first map is multiplication by $f$. The long exact sequence in cohomology together with $h^1(\pp^3,\cO_{\pp^3}(j))=h^2(\pp^3,\cO_{\pp^3}(j))=0$ for all $j\in\Z$ gives:
\begin{align}
        h^0(\cF(k))&=h^0(\cI_C(k+D-1))-h^0(\cO_{\pp^3}(k-1)),\label{eq:coho0}\\
        h^1(\cF(k))&=h^1(\cI_C(k+D-1)),\label{eq:coho01}\\
        h^2(\cF(k))&=h^2(\cI_C(k+D-1))-h^3(\cI_C(k+D-1))+h^3(\cO_{\pp^3}(k-1))\label{eq:coho02}
    \end{align}
    for all $k\in\Z$. For $k=-1$ we obtain from \Cref{eq:coho0} and (\ref{eq:coho01}) that
    \begin{align*}
        h^0(\cF(-1))&=h^0(\cI_C(D-2))-h^0(\cO_{\pp^3}(-2))=0,\\
        h^1(\cF(-1))&=h^1(\cI_C(D-2))=0
    \end{align*}
    by our assumption and because $h^0(\cO_{\pp^3}(-2))=0$. For $k=-2$ we obtain from \Cref{eq:coho01} and (\ref{eq:coho02})
    \begin{align*}
        h^1(\cF(-2))&=h^1(\cI_C(D-3))=0,\\
        h^2(\cF(-2))&=h^2(\cI_C(D-3))-h^3(\cI_C(D-3))+h^3(\cO_{\pp^3}(-3))=0
    \end{align*}
    by our assumption and because $h^3(\cO_{\pp^3}(-3))=h^0(\cO_{\pp^3}(-1))=0$ by Serre duality. Thus the sheaf $\cF$ satisfies condition (b) of \cite{EisenbudSchreyerWeyman2003UlrichBundles}*{Proposition 2.1}. This implies that
    \begin{align*}
        h^0(\cF(k))=0&\textrm{ for }k<0,\\
        h^1(\cF(k))=0&\textrm{ for }k\in\Z,\\
        h^2(\cF(k))=0&\textrm{ for }k\geq-2,
    \end{align*}
    the statement about the free resolution and also about the determinantal representation of $f$ by parts (a) and (d) of \cite{EisenbudSchreyerWeyman2003UlrichBundles}*{Proposition 2.1}, respectively.
    Again using Serre duality and $h^0(\cO_{\pp^3}(j))=0$ for $j<0$, we obtain the first three additional claims on  $\cI_C$ from \Cref{eq:coho0}, (\ref{eq:coho01}) and (\ref{eq:coho02}). For the last one, we first observe that $h^0(\cF)=D$ follows from long exact sequence in cohomology obtained from the free resolution of $\cF$. The equality $h^0(\cI_C(D-1))=h^0(\cF)$ then follows from \Cref{eq:coho0} and $h^0(\cO_{\pp^3}(-1))=0$. Hence $h^0(\cI_C(D-1))=D$.
\end{proof}

\section{Smoothness of Adjoints} \label{sec:smooth} 
Understanding whether a hypersurface $X\subseteq\mathbb{P}^n$ has a determinantal representation is connected to understanding whether $X$ is smooth. More precisely, if $n\geq 4$ and $X$ is smooth, then $X$ cannot have a determinantal representation \cite{Dolgachev2012CAG}*{Section 4.1.1}. For $n\leq 3$ smoothness is however not an obstruction to having a determinantal representation. In this section we will show that smooth adjoints in dimension at least three exist but are rare in the sense that in each dimension $n\geq 3$ there exist only finitely many combinatorial types of polytopes for which the adjoint can be smooth. This is in contrast to the two-dimensional case: the adjoint of a generic polygon in the plane is smooth \cite{telen2025toric}*{Corollary 7.14}. Note that the existence result in dimension four implies that not every adjoint of a polytope $P$ (even if the facet hyperplane arrangement $\mathcal{H}_P$ is simple) has a determinantal representation. For a concrete counterexample, see \Cref{ex:counter}. 

We begin with treating the three-dimensional case in detail. 
More precisely, we show that for all except for finitely many combinatorial types of 3-dimensional simple convex polytopes the adjoint is necessarily singular, and that this is forced by the combinatorial structure of the residual arrangement. For the remaining finitely many combinatorial types we show that there is a representative with a smooth adjoint. Since smoothness is an open condition, this implies that in the space of all adjoints of polytopes with such a combinatorial type the smooth adjoints form a non-empty Zariski open subset. Informally speaking, for such combinatorial types there are many smooth adjoints.

\begin{lem} \label{lem:threelines}
    Let $P$ be a polytope in $\pp^3$ with a simple facet hyperplane arrangement $\mathcal{H}_P$. If the residual arrangement $\cR(P)$ contains three lines that meet at a point, then this point is a singular point of the adjoint surface of $P$.
\end{lem}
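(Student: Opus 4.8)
The plan is to derive a contradiction from the assumption that the adjoint surface $A_P=\cV(\alpha_P)$ is smooth at the common point $p$ of the three residual lines. The mechanism is the elementary fact that any line lying on a hypersurface and passing through a smooth point must be contained in the projective tangent plane at that point. Since a single plane cannot contain three pairwise-non-coplanar concurrent lines, it suffices to show that the three residual lines through $p$ are non-coplanar.

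The first and main step is to pin down the geometry of the three lines using the simplicity of $\mathcal{H}_P$. Each residual line is an intersection $H_i\cap H_j$ of two facet hyperplanes, and passing through $p$ means $p\in H_i\cap H_j$. Let $S=\{\,j : p\in H_j\,\}$. By simplicity the intersection of any four hyperplanes of $\mathcal{H}_P$ is empty, so $|S|\leq 3$; and three distinct lines through $p$ require at least three distinct pairs from $S$, forcing $|S|=3$, say $S=\{1,2,3\}$. Then $p=H_1\cap H_2\cap H_3$ is a single point (again by simplicity), and the three lines are exactly the pairwise intersections $L_1=H_1\cap H_2$, $L_2=H_1\cap H_3$, $L_3=H_2\cap H_3$. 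These are non-coplanar: $L_1$ and $L_2$ both lie in $H_1$ and span it, so any plane containing all three would have to equal $H_1$; but then $L_3=H_2\cap H_3\subseteq H_1$ would force $H_1,H_2,H_3$ to share a line, contradicting simplicity. (In coordinates, taking $H_i=\cV(x_{i-1})$ and $p=[0:0:0:1]$, the non-coplanarity reduces to a one-line check.)

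To conclude, recall that the three lines lie in $\cR(P)$, hence in $A_P$, since $\alpha_P$ vanishes on the residual arrangement. If $A_P$ were smooth at $p$, then $T_pA_P=\{\,x : \nabla\alpha_P(p)\cdot x=0\,\}$ would be a well-defined plane, and each $L_i$, being a line on $A_P$ through the smooth point $p$, would satisfy $L_i\subseteq T_pA_P$ (the tangent line to $L_i$ at $p$ is $L_i$ itself and must lie in the tangent plane). This places three non-coplanar lines inside a single plane, a contradiction. Hence $\nabla\alpha_P(p)=0$, i.e. $p$ is a singular point of $A_P$.

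I expect the tangent-plane step to be routine; the crux is the non-coplanarity, and it is precisely here that simplicity is indispensable. Three concurrent lines on a surface do not force a singularity in general---an Eckardt point of a smooth cubic surface carries three concurrent \emph{coplanar} lines---so the argument genuinely relies on simplicity to exclude the coplanar configuration. Two minor points to verify in writing up: that the three lines are automatically distinct (they are, as pairwise intersections of a simple arrangement), and that the conclusion is insensitive to a possible reducibility of $A_P$, which it is, since only $\nabla\alpha_P(p)$ enters the argument.
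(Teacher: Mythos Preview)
Your proof is correct and follows essentially the same approach as the paper's: simplicity forces the three concurrent residual lines to be non-coplanar, hence they span a three-dimensional tangent space at $p$, which is a singularity. The paper compresses this into a single sentence, whereas you spell out explicitly why simplicity bounds $|S|\le 3$, why the three lines must then be the three pairwise intersections of $H_1,H_2,H_3$, and why these are non-coplanar---all of which is implicit in the paper's one-line argument.
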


\begin{proof}
    Since $\mathcal{H}_P$ is simple, the three lines in $\cR(P)$ are not coplanar, and hence they span a three-dimensional tangent space to the adjoint hypersurface, i.e., the tangent space in a singular point.
\end{proof}

\begin{prop}\label{prop:adj_sing}
 Let $P$ be a polytope in $\pp^3$ with $k\geq9$ facets such that the facet hyperplane arrangement $\mathcal{H}_P$ is simple. Then the residual arrangement $\cR(P)$ contains three lines meeting in a single point.
 In particular, the adjoint hypersurface of $P$ is singular.
\end{prop}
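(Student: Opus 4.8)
The plan is to reduce the statement to a purely combinatorial fact about the facet-adjacency graph of $P$ and then feed the result into \Cref{lem:threelines}, which already supplies the singularity. The crux is to understand exactly when three lines of $\cR(P)$ can be concurrent. Since $\mathcal H_P$ is simple, every point of $\pp^3$ lies on at most three hyperplanes of $\mathcal H_P$, any two distinct such hyperplanes meet in a line, and any three meet in a single point; in particular each line of $\cR(P)$ is the intersection of exactly two facet hyperplanes. Moreover the lines of $\cR(P)$ are precisely the lines $H\cap H'$ for two \emph{non-adjacent} facet hyperplanes: if $H\cap H'$ is not an edge of $P$ and $P$ is simple, then $H$ and $H'$ share no vertex either (two facets through a common vertex of a simple $3$-polytope are adjacent), so $H\cap H'$ genuinely contains no face of $P$.

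First I would combine these observations to prove the following equivalence: three lines of $\cR(P)$ pass through a common point $p$ if and only if $p$ lies on exactly three hyperplanes $H_a,H_b,H_c$ whose three pairwise intersections $H_aH_b$, $H_aH_c$, $H_bH_c$ are all residual, i.e. the facets $H_a,H_b,H_c$ are \emph{pairwise non-adjacent}. Indeed, a point carrying three distinct residual lines must lie on three hyperplanes (two would give only one line), and conversely three pairwise non-adjacent facets meet, by simplicity, in a single point through which the three residual lines $H_aH_b,H_aH_c,H_bH_c$ pass. This reduces the proposition to producing three pairwise non-adjacent facets.

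Next I would phrase this graph-theoretically. Let $G$ be the facet-adjacency graph of $P$ (vertices are the facets, edges are adjacent facet pairs). This is the graph of the dual polytope $P^\circ$, hence a planar graph on $k$ vertices. Three pairwise non-adjacent facets are exactly an independent set of size three in $G$, so it suffices to show that the independence number of $G$ is at least $3$ whenever $k\geq 9$.

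The main obstacle is securing this independent triple precisely at the threshold $k=9$: elementary greedy / Caro--Wei estimates only guarantee an independent pair for planar triangulations and are too weak here. I would therefore invoke the four color theorem: a planar graph on $k$ vertices is $4$-colorable, so some color class has at least $\lceil k/4\rceil$ vertices, and $\lceil k/4\rceil\geq 3$ exactly when $k\geq 9$, matching the hypothesis sharply. (For $k\geq 11$ one can avoid the theorem: a minimum-degree vertex of a planar graph has degree at most $5$, hence at least $k-6\geq 5$ non-neighbors; if no three vertices were independent these would be pairwise adjacent, forming a clique on at least five vertices, impossible in a planar graph. The residual cases $k=9,10$ are where four-colorability is genuinely convenient.) Combining the independent triple with the equivalence from the first step yields three concurrent lines of $\cR(P)$, and \Cref{lem:threelines} then forces the adjoint surface to be singular at their common point.
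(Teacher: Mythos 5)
Your proof is correct, and it shares the paper's overall reduction: both arguments boil the statement down to finding three pairwise non-adjacent facets --- an independent set of size three in the facet-adjacency graph, which is planar as the graph of the dual polytope $P^\circ$ --- and both then convert such a triple, via simplicity of $\mathcal{H}_P$, into three concurrent residual lines and apply \Cref{lem:threelines}. (Your preliminary equivalence between triples of concurrent residual lines and triples of pairwise non-adjacent facets is fine; the one step worth making explicit is that ``$H$ and $H'$ share no vertex of $P$'' uses simplicity of the \emph{arrangement}, not merely of the polytope, to identify the hyperplanes through a vertex with the closures of the three facets containing it.) Where you genuinely diverge is the combinatorial core. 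The paper stays elementary: Euler's formula yields a facet of degree $\delta\leq5$; for $k\geq11$, and for $k=10$ after sharpening to $\delta\leq4$, the independent triple comes from the absence of $K_5$ in a planar graph; and the threshold case $k=9$, $\delta=4$ is settled by a careful planarity argument around a hypothetical $K_4$ on the four remaining facets. You instead invoke the Four Color Theorem: some color class has at least $\lceil k/4\rceil\geq3$ vertices precisely when $k\geq9$, which treats all cases uniformly and matches the sharp threshold (for $k=8$ the paper exhibits a polytope whose adjacency graph has independence number two, consistent with $\lceil 8/4\rceil=2$). The trade-off: your route is shorter and avoids the delicate $k=9$ case analysis, but it cites a vastly deeper, computer-assisted theorem for a fact --- independence number at least three for planar graphs on at least nine vertices --- which, as the paper demonstrates, admits a short self-contained proof.
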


\begin{proof}
 We prove that the residual arrangement $\cR(P)$ contains three lines meeting in a single point. By \Cref{lem:threelines} this also implies the second claim.

 We denote by $v$, $e$ and $k$ the number of vertices, edges and facets of $P$. Recall that the \emph{degree} of a facet of $P$ is the number of adjacent facets. We have the classical identities
 \begin{equation*}
 \begin{split}
    v - e + k = 2, \\
    \sum_F \deg(F) = 2e = 3v
 \end{split}
 \end{equation*}
 with the sum ranging over all facets of $P$. From these identities we obtain
 \begin{equation*}
     \sum_F \deg(F) = 6k - 12.
 \end{equation*}
 Next choose a facet $F_0$ of smallest degree $\delta$. From
 \begin{equation*}
    \delta k \leq 6k-12 \iff 12 \leq k(6-\delta)
 \end{equation*}
 we obtain $\delta \leq 5$. 
 We now make a case distinction based on $k$. We recall that the one-skeleton of any three-dimensional polytope is planar \cite{grunbaum1967convex}*{Section 13.1}.
 \begin{itemize}
     \item $k \geq 11$. Color $F_0$ black and all adjacent facets white. There are now at least five uncolored facets. Of these at least two are not adjacent, since otherwise the facet adjacency graph of $P$ (which is the one-skeleton of the dual polytope $P^\circ$) would contain a subgraph isomorphic to the complete graph on five vertices, which is not planar. Thus we may choose two such facets and color them black. The remaining facets are colored white. By construction, the three black facets are mutually non-adjacent. Hence they define three lines in the residual arrangement that are not coplanar and intersect in a single point.

     \item $k = 10$. Then $\delta = \deg(F_0) \leq 4$, since $12 > 10 = k \cdot (6-5)$. Now the same argument as before applies, since after coloring $F_0$ and its adjacent facets, there remain at least five uncolored facets.

     \item $k = 9$. There are two different cases that may occur. If $\delta = 3$, then we invoke the same argument again. If $\delta = 4$, then the initial coloring leaves four facets uncolored. Assume that all four of them are mutually adjacent. Then the facet adjacency graph $G$ of $P$ contains the complete graph $K_4$ on four vertices as a subgraph. Choose a planar representation of $G$. For a schematic guide through the argument see a sketch of $G$ in \Cref{fig:subK4}. The vertex corresponding to $F_0$ lies in one of the four triangular regions bounded by $K_4$. Let $S$ denote that region. By planarity, $S$ must also contain all four vertices in $G$ corresponding to facets adjacent to $F_0$ and moreover, the unique vertex of $G$ outside of $S$ is not connected to any of these four vertices nor to the vertex corresponding to $F_0$. Therefore, since $k = 9$, the unique vertex outside of $S$ has degree three, a contradiction to $\delta = 4$. \qedhere

     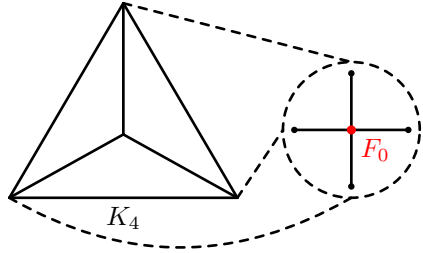
\begin{figure}[h]
         \centering
         \begin{tikzpicture}[line cap=round,line join=round,x=3cm,y=3cm]
\draw [line width=1pt] (0,0)-- (1,0);
\draw [line width=1pt] (1,0)-- (0.5,0.86);
\draw [line width=1pt] (0.5,0.86)-- (0,0);
\draw [line width=1pt] (0.5,0.86)-- (0.5,0.28);
\draw [line width=1pt] (0.5,0.28)-- (1,0);
\draw [line width=1pt] (0.5,0.28)-- (0,0);
\node[anchor=north] at (0.5,0) {$K_4$};

\draw [fill=black] (1.25,0.3) circle (1pt);
\draw [fill=black] (1.75,0.3) circle (1pt);
\draw [fill=black] (1.5,0.05) circle (1pt);
\draw [fill=black] (1.5,0.55) circle (1pt);
\draw [line width=1pt] (1.5,0.3)-- (1.25,0.3);
\draw [line width=1pt] (1.5,0.3)-- (1.75,0.3);
\draw [line width=1pt] (1.5,0.3)-- (1.5,0.05);
\draw [line width=1pt] (1.5,0.3)-- (1.5,0.55);
\node[anchor=north west, color=red] at (1.5,0.3) {$F_0$};
\draw [line width=1pt, dashed] (1.5,0.3) circle (0.9cm);
\draw [line width=1pt, dashed] (1,0)-- (1.2,0.3);
\draw [line width=1pt, dashed] (0.5,0.86)-- (1.5,0.6);
\draw [line width=1pt, dashed] (0,0) to[out=-30,in=-150] (1.5,0);
\draw [fill=black, color=red] (1.5,0.3) circle (1.5pt);

\end{tikzpicture}
         \caption{Planar representation of $G$ containing $K_4$. By symmetry, we can assume $F_0$ to lie in the unbounded region $S$ of the complement of $K_4$. The exact edges connecting $K_4$ and the remaining vertices are unknown and thus only implied as dotted lines.}
         \label{fig:subK4}
     \end{figure}
 \end{itemize}
\end{proof}

\begin{rem}
    The  bound from \Cref{prop:adj_sing} is tight. Indeed, there is a simple polytope $P$ with eight facets such that $\cR(P)$ does not contain three lines meeting in a single point, see \cite{KohnRanestad2019AdjCurves}*{Example 14} or \Cref{tab:combinatorialTypes} below.
\end{rem}
Without much additional work we obtain the following result.

\begin{cor} \label{cor:ResLines}
 Let $P$ be a polytope in $\pp^3$ with $k$ facets such that the facet hyperplane arrangement $\mathcal{H}_P$ is simple. Then the residual arrangement $\cR(P)$ contains exactly
 \begin{equation*}
    \binom{k}{2} - 3(k-2) = \binom{k-3}{2}
 \end{equation*}
 many lines.
\end{cor}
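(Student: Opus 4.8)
The plan is to set up a bijection between the lines of the residual arrangement $\cR(P)$ and the non-adjacent pairs of facets of $P$, and then to count the latter using the combinatorial identities already recalled in the proof of \Cref{prop:adj_sing}.

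First I would pin down which intersections of hyperplanes in $\mathcal{H}_P$ are lines. Write $H_1,\ldots,H_k$ for the facet hyperplanes. Because $\mathcal{H}_P$ is simple, the intersection of any two distinct $H_i,H_j$ has codimension two, hence is a line, while the intersection of any three distinct hyperplanes is a point. Consequently a line lies on at most two of the $H_i$ (a line contained in three distinct facet hyperplanes would make their intersection at least one-dimensional, contradicting simplicity), so the lines occurring in the arrangement are exactly the $\binom{k}{2}$ lines $H_i\cap H_j$, each arising from a unique pair $\{i,j\}$. It then remains to decide which of these lie in $\cR(P)$.

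The key step is to show that $H_i\cap H_j$ contains a face of $P$—and is therefore \emph{excluded} from $\cR(P)$—precisely when $F_i$ and $F_j$ are adjacent. If $F_i$ and $F_j$ share an edge, that edge is a face contained in $H_i\cap H_j$, so the line is excluded. Conversely, suppose $F_i,F_j$ are not adjacent; I claim $H_i\cap H_j$ contains no face. It is not a facet, being a line. It contains no edge, since an edge lies on exactly two facet hyperplanes, and an edge lying on $H_i\cap H_j$ would force $F_i,F_j$ to be the two facets meeting along it. Finally it contains no vertex: by simplicity every vertex of $P$ is the intersection of exactly three facet hyperplanes (a fourth hyperplane through a vertex would give a nonempty intersection of four hyperplanes, contradicting simplicity), and those three facets are pairwise adjacent, so a vertex on $H_i\cap H_j$ would again make $F_i,F_j$ adjacent. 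Thus the lines of $\cR(P)$ correspond bijectively to the non-adjacent pairs of facets.

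It then remains to count those pairs. Each pair of adjacent facets corresponds to a unique edge of $P$, so the number of adjacent pairs is $e$, and from $v-e+k=2$ together with $2e=3v$ (recalled in the proof of \Cref{prop:adj_sing}) one gets $e=3(k-2)$. Hence the number of lines in $\cR(P)$ equals $\binom{k}{2}-3(k-2)$, and the asserted equality $\binom{k}{2}-3(k-2)=\binom{k-3}{2}$ is the routine simplification $\tfrac12(k^2-7k+12)=\tfrac12(k-3)(k-4)$. The only delicate point is the vertex exclusion in the third paragraph, which is exactly where one uses the simplicity of the arrangement $\mathcal{H}_P$ rather than mere simplicity of the polytope $P$.
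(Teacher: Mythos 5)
Your proof is correct and follows essentially the same route as the paper's: identify the lines of $\cR(P)$ with the non-adjacent pairs of facets, then count via $\binom{k}{2}-e$ using the identities $v-e+k=2$ and $2e=3v$. The only difference is that you carefully verify the bijection (no facet, no edge, no vertex on the intersection line of a non-adjacent pair), which the paper compresses into the single assertion that, by simplicity of $\mathcal{H}_P$, a residual line arises exactly from two non-adjacent facet planes.
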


\begin{proof}
 As in the proof of \Cref{prop:adj_sing} we have the two classical identities
 \begin{equation*}
 \begin{split}
    v - e + k = 2, \\
    2e = 3v.
 \end{split}
 \end{equation*}
 Since $\mathcal{H}_P$ is simple, a line in the residual arrangement arises as the intersection of two non-adjacent facet planes. Therefore, it suffices to express $\binom{k}{2} - e$ in terms of~$k$. This computation yields the result.
\end{proof}

\begin{table}[ht]
    \centering
    \begin{tabular}{c|c|c}
        combinatorial type & residual arrangement & facet degrees \\
        \hline
        \begin{tikzpicture}[line cap=round,line join=round,x=1.5cm,y=1.5cm]
\draw [line width=1pt] (0,0)-- (1,0);
\draw [line width=1pt] (1,0)-- (0.5,0.86);
\draw [line width=1pt] (0.5,0.86)-- (0,0);
\draw [line width=1pt] (0.5,0.86)-- (0.5,0.28);
\draw [line width=1pt] (0.5,0.28)-- (1,0);
\draw [line width=1pt] (0.5,0.28)-- (0,0);
\end{tikzpicture}
& & 3333 \\
        \hline
        \begin{tikzpicture}[line cap=round,line join=round,x=1.5cm,y=1.5cm]
\draw [line width=1pt] (1.6,0)-- (2.6,0);
\draw [line width=1pt] (2.6,0)-- (2.1,0.86);
\draw [line width=1pt] (2.1,0.86)-- (1.6,0);
\draw [line width=1pt] (1.9,0.2)-- (2.3,0.2);
\draw [line width=1pt] (2.3,0.2)-- (2.1,0.54);
\draw [line width=1pt] (2.1,0.54)-- (1.9,0.2);
\draw [line width=1pt] (2.1,0.86)-- (2.1,0.54);
\draw [line width=1pt] (2.3,0.2)-- (2.6,0);
\draw [line width=1pt] (1.9,0.2)-- (1.6,0);
\end{tikzpicture}
&
\begin{tikzpicture}[line cap=round,line join=round,x=1.5cm,y=1.5cm]
\draw [line width=1pt] (1.8,0)-- (2.5,-0.8);
\draw [fill=black] (2.3,-0.3) circle (1pt);
\end{tikzpicture} & 44433 \\
        \hline
        \begin{tikzpicture}[line cap=round,line join=round,x=1.5cm,y=1.5cm]

\draw [line width=1pt] (4,0)-- (4,0.8);
\draw [line width=1pt] (4,0.8)-- (4.8,0.8);
\draw [line width=1pt] (4.8,0.8)-- (4.8,0);
\draw [line width=1pt] (4.8,0)-- (4,0);
\draw [line width=1pt] (4.2,0.2)-- (4.2,0.6);
\draw [line width=1pt] (4.2,0.6)-- (4.6,0.6);
\draw [line width=1pt] (4.6,0.6)-- (4.6,0.2);
\draw [line width=1pt] (4.6,0.2)-- (4.2,0.2);
\draw [line width=1pt] (4.2,0.2)-- (4,0);
\draw [line width=1pt] (4.2,0.6)-- (4,0.8);
\draw [line width=1pt] (4.6,0.6)-- (4.8,0.8);
\draw [line width=1pt] (4.6,0.2)-- (4.8,0);
\end{tikzpicture}
&
\begin{tikzpicture}[line cap=round,line join=round,x=1.5cm,y=1.5cm]
\draw [line width=1pt] (4.2,-0.4)-- (4,-1.2);
\draw [line width=1pt] (4.2,-1)-- (4.8,-1.2);
\draw [line width=1pt] (4.8,-1)-- (4.4,-0.4);
\end{tikzpicture} & 444444 \\
        \hline
        \begin{tikzpicture}[line cap=round,line join=round,x=1.5cm,y=1.5cm]
\draw [line width=1pt] (5.6,0)-- (6.6,0);
\draw [line width=1pt] (6.6,0)-- (6.1,0.86);
\draw [line width=1pt] (6.1,0.86)-- (5.6,0);
\draw [line width=1pt] (6,0.3)-- (6.2,0.3);
\draw [line width=1pt] (6.2,0.3)-- (6.1,0.47);
\draw [line width=1pt] (6.1,0.47)-- (6,0.3);
\draw [line width=1pt] (5.6,0)-- (5.8,0.1);
\draw [line width=1pt] (5.8,0.1)-- (6,0.3);
\draw [line width=1pt] (6.1,0.47)-- (6.1,0.86);
\draw [line width=1pt] (6.4,0.1)-- (6.2,0.3);
\draw [line width=1pt] (6.4,0.1)-- (6.6,0);
\draw [line width=1pt] (6.4,0.1)-- (5.8,0.1);
\end{tikzpicture}
&
\begin{tikzpicture}[line cap=round,line join=round,x=1.5cm,y=1.5cm]
\draw [line width=1pt] (6.4,-0.4)-- (6.2,-1.2);
\draw [line width=1pt] (6.4,-1)-- (5.6,-1);
\draw [line width=1pt] (5.6,-0.4)-- (5.8,-1.2);
\draw [fill=black] (5.8,-0.6) circle (1pt);
\draw [fill=black] (6.2,-0.6) circle (1pt);
\end{tikzpicture} & 554433 \\
        \hline
        \begin{tikzpicture}[line cap=round,line join=round,x=1.5cm,y=1.5cm]
\draw [line width=1pt] (7,0)-- (7.6,0);
\draw [line width=1pt] (7.6,0)-- (7.9,0.51);
\draw [line width=1pt] (7.9,0.51)-- (7.6,1.03);
\draw [line width=1pt] (7.6,1.03)-- (7,1.03);
\draw [line width=1pt] (7,1.03)-- (6.7,0.51);
\draw [line width=1pt] (6.7,0.51)-- (7,0);
\draw [line width=1pt] (7,0)-- (7,0.3);
\draw [line width=1pt] (7,0.3)-- (7.2,0.5);
\draw [line width=1pt] (7.2,0.5)-- (7.4,0.5);
\draw [line width=1pt] (7.4,0.5)-- (7.6,0);
\draw [line width=1pt] (7.4,0.5)-- (7.6,0.7);
\draw [line width=1pt] (7.6,0.7)-- (7.9,0.51);
\draw [line width=1pt] (7.6,0.7)-- (7.6,1.03);
\draw [line width=1pt] (7,1.03)-- (7.2,0.5);
\draw [line width=1pt] (7,0.3)-- (6.7,0.51);
\end{tikzpicture}
&
\begin{tikzpicture}[line cap=round,line join=round,x=1.5cm,y=1.5cm]
\draw [line width=1pt] (7.4,-0.2)-- (8,-0.8);
\draw [line width=1pt] (7.2,-1.4)-- (8,-0.6);
\draw [line width=1pt] (6.8,-0.8)-- (7.4,-1.4);
\draw [line width=1pt] (7,-0.6)-- (7,-1.2);
\draw [line width=1pt] (7.8,-0.4)-- (7.2,-0.4);
\draw [line width=1pt] (6.8,-1)-- (7.6,-0.2);
\draw [fill=black] (7.8,-1) circle (1pt);
\draw [fill=black] (7.6,-1.2) circle (1pt);
\end{tikzpicture} & 6554433 \\
        \hline
        \begin{tikzpicture}[line cap=round,line join=round,x=1.5cm,y=1.5cm]
\draw [line width=1pt] (15,0)-- (16,0);
\draw [line width=1pt] (16,0)-- (15.5,0.86);
\draw [line width=1pt] (15.5,0.86)-- (15,0);
\draw [line width=1pt] (15.5,0.7)-- (15.6,0.4);
\draw [line width=1pt] (15.6,0.4)-- (15.8,0.15);
\draw [line width=1pt] (15.8,0.15)-- (16,0);
\draw [line width=1pt] (15.2,0.15)-- (15,0);
\draw [line width=1pt] (15.2,0.15)-- (15.4,0.4);
\draw [line width=1pt] (15.4,0.4)-- (15.5,0.7);
\draw [line width=1pt] (15.5,0.7)-- (15.5,0.86);
\draw [line width=1pt] (15.5,0.2)-- (15.2,0.15);
\draw [line width=1pt] (15.5,0.2)-- (15.8,0.15);
\draw [line width=1pt] (15.5,0.2)-- (15.5,0.3);
\draw [line width=1pt] (15.5,0.3)-- (15.6,0.4);
\draw [line width=1pt] (15.5,0.3)-- (15.4,0.4);
\end{tikzpicture}
&
\begin{tikzpicture}[line cap=round,line join=round,x=1.5cm,y=1.5cm]
\draw [line width=1pt] (15.8,-0.2)-- (15,-1);
\draw [line width=1pt] (15,-0.8)-- (16,-1);
\draw [line width=1pt] (16,-1.2)-- (15.6,-0.2);
\draw [line width=1pt] (15.7,-0.7)-- (16,-0.5);
\draw [line width=1pt] (15.5,-1.1)-- (15.6,-0.8);
\draw [line width=1pt] (15.5,-0.7)-- (15.2,-0.5);
\draw [fill=black] (15.4,-0.4) circle (1pt);
\end{tikzpicture} & 5554443 \\
        \hline
        \begin{tikzpicture}[line cap=round,line join=round,x=1.5cm,y=1.5cm]
\draw [line width=1pt] (16.8,0)-- (17.4,0);
\draw [line width=1pt] (17.4,0)-- (17.58,0.57);
\draw [line width=1pt] (17.58,0.57)-- (17.1,0.92);
\draw [line width=1pt] (17.1,0.92)-- (16.61,0.57);
\draw [line width=1pt] (16.61,0.57)-- (16.8,0);
\draw [line width=1pt] (16.95,0.2)-- (17.25,0.2);
\draw [line width=1pt] (17.25,0.2)-- (17.34,0.48);
\draw [line width=1pt] (17.34,0.48)-- (17.1,0.66);
\draw [line width=1pt] (17.1,0.66)-- (16.85,0.48);
\draw [line width=1pt] (16.85,0.48)-- (16.95,0.2);
\draw [line width=1pt] (16.8,0)-- (16.95,0.2);
\draw [line width=1pt] (17.25,0.2)-- (17.4,0);
\draw [line width=1pt] (17.34,0.48)-- (17.58,0.57);
\draw [line width=1pt] (17.1,0.66)-- (17.1,0.92);
\draw [line width=1pt] (16.85,0.48)-- (16.61,0.57);
\end{tikzpicture}
&
\begin{tikzpicture}[line cap=round,line join=round,x=1.5cm,y=1.5cm]
\draw [line width=1pt] (17.25,-0.2)-- (16.6,-0.5);
\draw [line width=1pt] (16.7,-0.3)-- (16.8,-1.1);
\draw [line width=1pt] (16.7,-1)-- (17.5,-1);
\draw [line width=1pt] (17.5,-0.3)-- (17.4,-1.1);
\draw [line width=1pt] (16.95,-0.2)-- (17.6,-0.5);
\draw [line width=1pt] (17.8,-1)-- (17.8,-0.3);
\end{tikzpicture} & 5544444 \\
        \hline
        \begin{tikzpicture}[line cap=round,line join=round 4,x=0.8cm,y=0.8cm]
\draw [line width=1pt] (0,0)-- (1,0);
\draw [line width=1pt] (1,0)-- (1.5,0.86);
\draw [line width=1pt] (1.5,0.86)-- (1,1.73);
\draw [line width=1pt] (1,1.73)-- (0,1.73);
\draw [line width=1pt] (0,1.73)-- (-0.5,0.86);
\draw [line width=1pt] (-0.5,0.86)-- (0,0);
\draw [line width=1pt] (0,0)-- (0,0.4);
\draw [line width=1pt] (0,0.4)-- (0.2,0.8);
\draw [line width=1pt] (0.2,0.8)-- (0.4,1);
\draw [line width=1pt] (0.4,1)-- (0.8,0.8);
\draw [line width=1pt] (0.8,0.8)-- (1,0);
\draw [line width=1pt] (0.8,0.8)-- (1,1);
\draw [line width=1pt] (1,1)-- (1.5,0.86);
\draw [line width=1pt] (1,1)-- (1,1.73);
\draw [line width=1pt] (0,1.73)-- (0.4,1);
\draw [line width=1pt] (-0.5,0.86)-- (-0.2,0.8);
\draw [line width=1pt] (-0.2,0.8)-- (0,0.4);
\draw [line width=1pt] (-0.2,0.8)-- (0.2,0.8);
\end{tikzpicture}
&
\begin{tikzpicture}[line cap=round,line join=round 4,x=0.3cm,y=0.3cm]
\draw [line width=1pt] (-2,0)-- (5,0);
\draw [line width=1pt] (-2,3)-- (5,3);
\draw [line width=1pt] (-1,4)-- (-1,-1);
\draw [line width=1pt] (4,4)-- (4,-1);
\draw [line width=1pt] (-1.5,2)-- (1.5,3.5);
\draw [line width=1pt] (-1.5,1)-- (1.5,-0.5);
\draw [line width=1pt] (4.5,2)-- (2.5,3.5);
\draw [line width=1pt] (4.5,1)-- (2.5,-0.5);
\draw [line width=1pt] (-1.5,-0.75)-- (3.75,2.75);
\draw [line width=1pt] (-1.5,3.75)-- (3.75,0.25);

\draw [fill=black] (5,2) circle (1pt);
\draw [fill=black] (5,1) circle (1pt);
\draw [fill=white] (1.9,1.5) circle (1.5pt);
\end{tikzpicture} & 65554443 \\
        \hline
        \begin{tikzpicture}[line cap=round,line join=round,x=2cm,y=2cm]

\draw [line width=1pt] (0,0)-- (1,0);
\draw [line width=1pt] (1,0)-- (0.5,0.86);
\draw [line width=1pt] (0.5,0.86)-- (0,0);
\draw [line width=1pt] (0.5,0.86)-- (0.5,0.7);
\draw [line width=1pt] (0.5,0.7)-- (0.6,0.4);
\draw [line width=1pt] (1,0)-- (0.8,0.15);
\draw [line width=1pt] (0.8,0.15)-- (0.6,0.4);
\draw [line width=1pt] (0,0)-- (0.2,0.15);
\draw [line width=1pt] (0.2,0.15)-- (0.5,0.2);
\draw [line width=1pt] (0.5,0.2)-- (0.8,0.15);
\draw [line width=1pt] (0.2,0.15)-- (0.4,0.4);
\draw [line width=1pt] (0.4,0.4)-- (0.5,0.7);
\draw [line width=1pt] (0.4,0.4)-- (0.45,0.35);
\draw [line width=1pt] (0.5,0.2)-- (0.5,0.25);
\draw [line width=1pt] (0.45,0.35)-- (0.5,0.25);
\draw [line width=1pt] (0.5,0.25)-- (0.55,0.35);
\draw [line width=1pt] (0.55,0.35)-- (0.45,0.35);
\draw [line width=1pt] (0.55,0.35)-- (0.6,0.4);
\end{tikzpicture}
&
\begin{tikzpicture}[line cap=round,line join=round,x=1.5cm,y=1.5cm]
\draw [line width=1pt] (0,-0.2)-- (0,-1.4);
\draw [line width=1pt] (0.8,-0.4)-- (-0.2,-0.4);
\draw [line width=1pt] (0.5,-0.3)-- (-0.2,-0.7);
\draw [line width=1pt] (0.5,-0.7)-- (-0.1,-0.2);
\draw [line width=1pt] (0.8,-1.2)-- (-0.2,-1.2);
\draw [line width=1pt] (0.5,-0.9)-- (-0.1,-1.4);
\draw [line width=1pt] (-0.2,-0.9)-- (0.5,-1.3);
\draw [line width=1pt] (0.7,-1.3)-- (0.7,-0.3);
\draw [line width=1pt] (0.4,-0.5)-- (0.4,-1.1);
\draw [line width=1pt] (-0.1,-0.5)-- (-0.1,-1.1);

\draw [fill=black] (0.8,-1) circle (1pt);
\draw [fill=black] (0.8,-0.6) circle (1pt);
\end{tikzpicture} & 55555533 \\
        \hline
        \begin{tikzpicture}[line cap=round,line join=round,x=1.8cm,y=1.8cm]

\draw [line width=1pt] (13,0)-- (13.6,0);
\draw [line width=1pt] (13.6,0)-- (13.78,0.57);
\draw [line width=1pt] (13.78,0.57)-- (13.3,0.92);
\draw [line width=1pt] (13.3,0.92)-- (12.81,0.57);
\draw [line width=1pt] (12.81,0.57)-- (13,0);
\draw [line width=1pt] (13.15,0.25)-- (13.45,0.25);
\draw [line width=1pt] (13.45,0.25)-- (13.54,0.53);
\draw [line width=1pt] (13.54,0.53)-- (13.3,0.71);
\draw [line width=1pt] (13.3,0.71)-- (13.05,0.53);
\draw [line width=1pt] (13.05,0.53)-- (13.15,0.25);
\draw [line width=1pt] (13,0)-- (13.1,0.1);
\draw [line width=1pt] (13.1,0.1)-- (13.15,0.25);
\draw [line width=1pt] (13.1,0.1)-- (13.5,0.1);
\draw [line width=1pt] (13.5,0.1)-- (13.45,0.25);
\draw [line width=1pt] (13.5,0.1)-- (13.6,0);
\draw [line width=1pt] (13.54,0.53)-- (13.78,0.57);
\draw [line width=1pt] (13.3,0.71)-- (13.3,0.92);
\draw [line width=1pt] (13.05,0.53)-- (12.81,0.57);
\end{tikzpicture}
&
\begin{tikzpicture}[line cap=round,line join=round,x=1.5cm,y=1.5cm]
\draw [line width=1pt] (13.05,-1.15)-- (13.75,-0.45);
\draw [line width=1pt] (13.5,-1.3)-- (13.9,-0.9);
\draw [line width=1pt] (12.9,-0.9)-- (13.3,-1.3);
\draw [line width=1pt] (13,-1.4)-- (13,-0.2);
\draw [line width=1pt] (14,-0.4)-- (12.8,-0.4);
\draw [line width=1pt] (13.3,-0.3)-- (12.9,-0.7);
\draw [line width=1pt] (13.75,-1.15)-- (13.05,-0.45);
\draw [line width=1pt] (13.9,-0.7)-- (13.5,-0.3);
\draw [line width=1pt] (14,-1.2)-- (12.8,-1.2);
\draw [line width=1pt] (13.8,-1.4)-- (13.8,-0.2);
\draw [line width=1pt] (13.05,-1.15)-- (13.75,-0.45);
\draw [line width=1pt] (13.75,-1.15)-- (13.05,-0.45);

\draw [fill=white] (13.4,-0.8) circle (1.5pt);
\end{tikzpicture} & 55554444 \\
        \hline
    \end{tabular}
    \caption{All combinatorial types (in stereographic projection) of simple convex polytopes in $\pp^3$, for which a representative with a smooth adjoint hypersurface exists,
    and their residual arrangements. The symbol $\circ$ indicates that two lines do not intersect.}
    \label{tab:combinatorialTypes}
\end{table}

We are now ready to give a characterization of three-dimensional polytopes with smooth adjoints. 
\begin{thm}\label{thm:smooth3d}
    There are exactly ten combinatorial types of simple polytopes in $\mathbb{P}^3$ that have a representative with a smooth adjoint. These are shown along with their residual arrangements in \Cref{tab:combinatorialTypes}.
\end{thm}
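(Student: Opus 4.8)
The plan is to split the statement into a \emph{necessity/finiteness} part, which cuts the problem down to a finite combinatorial search, and a \emph{realizability} part, which exhibits smooth representatives for the surviving types.

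First I would bound the number of facets. By \Cref{prop:adj_sing}, any polytope with $k\geq 9$ facets and simple facet hyperplane arrangement has a singular adjoint, so a smooth adjoint forces $4\leq k\leq 8$. The two smallest cases are immediate: the adjoint has degree $k-4$, so for $k=4$ it is a nonzero constant and $A_P=\emptyset$ is vacuously smooth, while for $k=5$ it is a linear form defining a plane, hence smooth. Each of these admits a unique combinatorial type of simple polytope (the simplex and the triangular prism), accounting for the first two rows of \Cref{tab:combinatorialTypes}.

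Next I would isolate the combinatorial obstruction for $6\leq k\leq 8$. The key point is a reformulation of the mechanism behind \Cref{prop:adj_sing}: if $P$ has three pairwise non-adjacent facets $F_1,F_2,F_3$, then, since $\mathcal{H}_P$ is simple, the hyperplanes $H_{F_1},H_{F_2},H_{F_3}$ meet in a single point, and the three pairwise intersection lines $H_{F_i}\cap H_{F_j}$ all lie in $\cR(P)$ and pass through that point; by \Cref{lem:threelines} the adjoint is then singular, and this holds for \emph{every} representative of the combinatorial type. Dually, three pairwise non-adjacent facets form an independent set of size three in the facet-adjacency graph, i.e.\ the $1$-skeleton of $P^\circ$, so the condition to be verified is that this graph, a planar triangulation on $k$ vertices, has independence number at most two. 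I would then run through the classical classification of simple $3$-polytopes, which yields $2$, $5$, and $14$ combinatorial types for $k=6,7,8$ respectively, and discard every type whose facet-adjacency graph contains an independent set of size three. This finite check leaves exactly $2+3+3=8$ types, which together with $k=4,5$ give the ten types of the table; along the way I would record their residual arrangements and confirm that in each no three lines are forced to be concurrent.

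Finally, for each surviving type I would show that a representative with a genuinely \emph{smooth} adjoint exists, and this is the step I expect to be the main obstacle. The combinatorial criterion only rules out the forced triple-point singularities of \Cref{lem:threelines}; it does not by itself guarantee smoothness, since the $\binom{k-3}{2}$ residual lines lying on the adjoint surface could still conspire to produce other singularities. I would therefore exhibit, for each of the eight types with $6\leq k\leq 8$, an explicit polytope of that type with simple facet hyperplane arrangement whose adjoint surface is verified to be smooth by a direct Jacobian computation (for instance in \texttt{Macaulay2}). Since smoothness is a Zariski-open condition on the coefficients of the adjoint, a single smooth example shows that the smooth adjoints form a non-empty Zariski-open subset among the adjoints of polytopes of that combinatorial type. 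Combining the necessity argument (every discarded type is singular for \emph{all} representatives) with these explicit realizations yields precisely the ten entries of \Cref{tab:combinatorialTypes}.
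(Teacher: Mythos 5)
Your proposal is correct and follows essentially the same route as the paper: bound the facet number via \Cref{prop:adj_sing}, dispose of $k=4,5$ where the adjoint is constant or linear, screen the classified simple types with $6\leq k\leq 8$ for three residual lines through a common point (your independent-set reformulation in the facet-adjacency graph is exactly this criterion, stated dually), and then certify the ten survivors by explicit representatives whose adjoints are checked to be smooth in \texttt{Macaulay2}. You also correctly identify that the computational verification of smoothness for explicit representatives is an unavoidable step, which is precisely how the paper concludes.
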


\begin{proof}
By \Cref{prop:adj_sing} any three-dimensional polytope with at least nine facets has a singular adjoint. An explicit list of (polar duals of) three-dimensional simple polytopes with at most eight facets is given in \cite{BrittonDunitz73Polytopes} (note that we are only interested in the triangulated graphs presented there). In the list of polytopes with eight facets, only those with labels 7, 10, and 14 do not have three residual lines meeting in a point. In the list of polytopes with seven facets, only those with labels 3, 4, and 5 do not have this property. All four combinatorial types of polytopes with at most six facets do not have it as well. \Cref{tab:combinatorialTypes} shows all these combinatorial types in stereographic projection along with their residual arrangements. 
For polytopes with four or five facets the adjoint is either a constant or a linear polynomial. 
For the remaining eight combinatorial types we constructed explicit representatives, computed their adjoint polynomials and verified that these define smooth hypersurfaces in \texttt{Macaulay2}. Our code is available at \cite{detrepscode}.
\end{proof}

We now turn to investigating smoothness of adjoints in dimension at least four. We begin with  a concrete example of a smooth adjoint of a four-dimensional polytope with a simple facet hyperplane arrangement.

\begin{ex} \label{ex:counter}
    Consider the polytope in $\mathbb{P}^4$ defined by the facet inequalities
    \begin{multline*}
        x_0\geq 0,\ x_1\geq 0,\ x_2\geq 0, x_3\geq 0, x_4\geq 0,\\ 2x_0+3x_1+x_2-x_3-3x_4\geq 0,-2x_0-x_1+x_2+2x_3+2x_4\geq 0.
    \end{multline*}
    The residual arrangement of this polytope consists of seven lines. In particular, it has no planes in it. The adjoint is the quadratic threefold given by
    $$2x_0x_2 + 3x_1x_2 + x_2^2  + 2x_0x_3 + 5x_1x_3 + 2x_2x_3 + 3x_1x_4 + 2x_2x_4=0.$$
    This is a smooth hypersurface in $\mathbb{P}^4$ and thus, by \Cref{prop:quadratic}, its defining polynomial does not admit a determinantal representation. It also follows from \Cref{prop:quadratic} that it does not contain a plane.
\end{ex}

We obtained this example by analyzing quadratic adjoints of different combinatorial types of simple four- and five-dimensional polytopes. There are five such combinatorial types in four dimensions, and eight in five dimensions. They are indexed by Gale diagrams of their polar dual polytopes in \cite{grunbaum1967convex}*{Figure 6.3.3}. This analysis leads us to the following statement, the proof of which is computational. 

\begin{prop}\label{prop:45quadrics}
Let $P$ be a four- or five-dimensional polytope such that the facet hyperplane arrangement $\mathcal{H}_P$ is simple and the adjoint polynomial $\alpha_P$ is of degree two. If the residual arrangement $\cR(P)$ contains a linear space of codimension two, then $\alpha_P$ has a determinantal representation. Otherwise, in the set of  adjoints of polytopes with the same combinatorial type as $P$ there exists a non-empty Zariski-open set of polytopes whose adjoint is smooth and thus does not have a determinantal representation. 
\end{prop}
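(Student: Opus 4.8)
\emph{Setup and reduction to \Cref{prop:quadratic}.} The plan is to funnel everything through \Cref{prop:quadratic}. First I would record that $\deg\alpha_P=k-n-1=2$ forces exactly $k=n+3$ facets, so $k=7$ in dimension four and $k=8$ in dimension five; these are precisely the simple polytopes whose polar duals are simplicial $n$-polytopes with $n+3$ vertices, classified by the Gale diagrams in \cite{grunbaum1967convex}*{Figure 6.3.3} (five types for $n=4$, eight for $n=5$). The adjoint $X=A_P=\cV(\alpha_P)\subseteq\pp^n$ is a quadric, so \Cref{prop:quadratic} governs it: for $n\geq4$, the polynomial $\alpha_P$ has a determinantal representation if and only if $X$ contains an $(n-2)$-dimensional linear subspace, and if $X$ is smooth then $\alpha_P$ has no determinantal representation. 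The first assertion of the proposition then falls out immediately, since $A_P$ vanishes on $\cR(P)$ by definition: a codimension-two linear space $L\subseteq\cR(P)$ is an $(n-2)$-dimensional linear subspace of $X$, and \Cref{prop:quadratic}~$(ii)\Rightarrow(i)$ produces a determinantal representation.

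\emph{Making the dichotomy combinatorial.} Next I would argue that which of the two cases occurs is determined by the combinatorial type alone. Because $\mathcal{H}_P$ is simple, a codimension-two element of $\cR(P)$ is exactly an intersection $H_1\cap H_2$ of two facet hyperplanes, and it lies in $\cR(P)$ precisely when the facets $F_1,F_2$ are non-adjacent: in a simple polytope the intersection of two facets is either empty or a ridge, so $H_1\cap H_2$ contains a face of $P$ if and only if $F_1\cap F_2$ is a ridge. Hence the condition ``$\cR(P)$ contains a codimension-two linear space'' depends only on the facet-adjacency graph, i.e. only on the combinatorial type, and each of the finitely many types lands in exactly one case. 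Every type admitting a non-adjacent pair of facets is then settled by the previous paragraph.

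\emph{The remaining (2-neighborly) types.} It remains to treat the types in which all facets are pairwise adjacent (equivalently, the polar dual is $2$-neighborly), where $\cR(P)$ has no codimension-two linear space. Here I would parametrize the polytopes of a fixed type by the coefficients of their $k$ facet hyperplanes, ranging over an open semialgebraic realization locus $U$ in an affine space, and use that the coefficients of $\alpha_P$ are polynomial functions of these data via Warren's formula \cite{warren1996barycentric} (or \cite{PavlovTelen2024PosGeom}*{Proposition 2.1}). Smoothness of a quadric is the non-vanishing of its discriminant, a polynomial in the coefficients of $\alpha_P$; pulling it back cuts out a Zariski-open subset $U_{\mathrm{sm}}\subseteq U$ of polytopes with smooth adjoint. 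To show $U_{\mathrm{sm}}\neq\varnothing$ I would exhibit, for each remaining type, one explicit metric realization with smooth adjoint and verify smoothness in \texttt{Macaulay2} — the four-dimensional instance being \Cref{ex:counter}. By openness the smooth adjoints then form a non-empty Zariski-open set, and by \Cref{prop:quadratic} a smooth quadric in $\pp^n$ with $n\geq4$ has no determinantal representation, completing the argument.

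\emph{Main obstacle.} Once \Cref{prop:quadratic} and the combinatorial reduction are in place the conceptual content is light, and the real work is the finite verification. The principal difficulty is the bookkeeping across the five four-dimensional and eight five-dimensional Gale types: for each I must read off the facet-adjacency graph (deciding the case) and, for the $2$-neighborly types, construct an explicit realization whose adjoint is a smooth quadric and confirm this by a discriminant computation. A secondary point requiring care is the openness and non-emptiness claim itself: I must check that $U$ is genuinely open and that the adjoint map is a morphism on it, so that intersecting with the discriminant complement really yields a non-empty Zariski-open family of the asserted kind.
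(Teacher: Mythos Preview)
Your proposal is correct and follows essentially the same route as the paper: reduce both assertions to \Cref{prop:quadratic}, observe that the codimension-two condition on $\cR(P)$ is the combinatorial condition of having a non-adjacent pair of facets, and for the remaining ($2$-neighborly dual) types exhibit an explicit realization with smooth adjoint and appeal to Zariski-openness. The paper is slightly more precise in stating that exactly one type in dimension four and two in dimension five fall into the second case, so only three explicit constructions are needed rather than a case-by-case pass through all thirteen Gale types.
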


\begin{proof}
    If the adjoint hypersurface $A_P$ contains an intersection of two facet hyperplanes, then $\alpha_P$ has a determinantal representation by \Cref{prop:quadratic}. Now suppose $P$ does not contain an intersection of two facet hyperplanes. There are finitely many possibilities for the combinatorial type of $P$: one in dimension four and two in dimension five (in \cite{grunbaum1967convex}*{Figure 6.3.3} they correspond to the last entry for $d=4$ and the last two entries for $d=5$). For each of these three types, we explicitly construct a polytope with a smooth adjoint. Since smoothness is a Zariski-open condition, the claim follows. The code containing the analysis of these combinatorial types is available at \cite{detrepscode}.
\end{proof}

\begin{rem}
    The meaning of \Cref{prop:45quadrics} is that, typically, the quadratic adjoint of a four- or five-dimensional polytope does not contain a linear subspace of codimension two, unless the residual arrangement contains such a linear subspace. The latter is a purely combinatorial condition.    
\end{rem}

Our final goal in this section is to show that in each dimension, starting from three, there exist only finitely many combinatorial types of polytopes for which the adjoint can be smooth. This will be achieved in \Cref{thm:smoothness}. We first prove the claim for polytopes whose facet hyperplane arrangement is simple, and then extend the result by degeneration. We start our way towards it with an auxiliary definition. 

\begin{Def}[Residual $m$-plane]
    Let $P$ be a polytope of dimension $n$ such that $\mathcal{H}_P$ is simple. By a \emph{residual $m$-plane} of $P$ we mean an $m$-plane contained in the residual arrangement $\mathcal{R}(P)$ that arises as the intersection of $n-m$ hyperplanes in $\mathcal{H}_P$. Note that we do not require this $m$-plane to be an irreducible component of $\mathcal{R}(P)$: it can be contained in a higher-dimensional component thereof.
\end{Def}

The following is an extension of \Cref{lem:threelines} to arbitrary dimensions.
\begin{lem}\label{lem:nlines_sing}
    Let $P$ be a polytope of dimension $n$ such that $\mathcal{H}_P$ is simple. If there exist $n$ residual lines in $\mathcal{R}(P)$ meeting in a point, then this point is a singular point of the adjoint hypersurface of $P$.
\end{lem}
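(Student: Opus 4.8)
The plan is to imitate the three-dimensional argument of \Cref{lem:threelines} but to carry out the linear-algebra bookkeeping in the homogeneous coordinate ring. Write $f=\alpha_P$ for the adjoint polynomial, which is homogeneous of degree $D=k-n-1$, and let $p\in\pp^n$ be the common point of the $n$ residual lines, represented by a vector $v\in\C^{n+1}$. Since the residual arrangement $\cR(P)$ is contained in the adjoint hypersurface $A_P=\cV(f)$, each of the $n$ lines lies on $A_P$. For a line $L\subseteq A_P$ through $p$ with cone $\tilde L=\langle v,w\rangle\subseteq\C^{n+1}$, the restriction $f|_{\tilde L}$ vanishes identically, so differentiating $f(v+tw)=0$ at $t=0$ gives $\nabla f(v)\cdot w=0$; together with Euler's relation $\nabla f(v)\cdot v=Df(v)=0$ this shows that $\nabla f(v)$ is orthogonal to the span of $v$ and all the direction vectors of the lines. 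Hence it suffices to prove that $v$ together with the $n$ line directions spans $\C^{n+1}$: then $\nabla f(v)=0$, so $p$ is a singular point of $A_P$.

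The heart of the proof is thus the claim that the $n$ lines span $\pp^n$, which is where simplicity of $\mathcal{H}_P$ enters (this is the analogue of ``not coplanar'' in \Cref{lem:threelines}, and the step I expect to require the most care). Each residual line is an intersection of exactly $n-1$ hyperplanes of $\mathcal{H}_P$, and every hyperplane occurring in the $n$ lines passes through $p$. By simplicity the intersection of any $n+1$ hyperplanes of $\mathcal{H}_P$ is empty, so at most $n$ of them contain $p$; on the other hand each of the $n$ lines is already an intersection of $n-1$ hyperplanes through $p$, so at least $n-1$ do. If only $n-1$ hyperplanes passed through $p$ they would determine the single line $\bigcap_{j=1}^{n-1}H_j$, too few; hence exactly $n$ hyperplanes $H_1,\dots,H_n$ pass through $p$ and the $n$ distinct lines must be $L_i=\bigcap_{j\neq i}H_j$ for $i=1,\dots,n$.

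Finally I would make the spanning explicit by choosing coordinates. By simplicity the defining linear forms $\ell_1,\dots,\ell_n$ of $H_1,\dots,H_n$ are linearly independent, since their common zero locus $p$ has codimension $n$; complete them to a basis $\ell_0,\ell_1,\dots,\ell_n$ of $(\C^{n+1})^*$ and take the dual basis $e_0,\dots,e_n$. Then $p=[e_0]$, while $L_i$ has cone $\langle e_0,e_i\rangle$, so the directions $e_1,\dots,e_n$ together with $v=e_0$ span $\C^{n+1}$. By the first paragraph this forces $\nabla f(v)=0$, so $p$ lies in the singular locus of $A_P$, as claimed. The only genuinely new ingredient beyond \Cref{lem:threelines} is the counting argument pinning down that exactly $n$ facet hyperplanes pass through $p$; everything else is the same tangent-space computation carried out in arbitrary dimension.
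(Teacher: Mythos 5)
Your proof is correct and follows essentially the same route as the paper: the $n$ residual lines lie on the adjoint hypersurface, simplicity of $\mathcal{H}_P$ forces them (together with the point) to span $\pp^n$, and a hypersurface whose tangent space at a point has full dimension is singular there. The paper states this in two sentences; your combinatorial identification of exactly $n$ facet hyperplanes through $p$ and the dual-basis computation of $\nabla f(v)=0$ simply make explicit the spanning claim that the paper asserts directly from simplicity.
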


\begin{proof}
    Since the adjoint hypersurface contains $\mathcal{R}(P)$, it contains the $n$ residual lines. Since $\mathcal{H}_P$ is a simple arrangement, these lines span an $n$-dimensional tangent space to the adjoint at their intersection point, yielding a singularity.
\end{proof}

In the following theorem we make use of the big $O$ notation: for two functions $f,g\colon\mathbb{N}\to\mathbb{N}$ we say that $f=O(g)$ if there exists $C\in\mathbb{N}$ such that $f(x)<Cg(x)$ for all $x\in\mathbb{N}$. 

\begin{thm}  \label{thm:smoothness}
    For all $n\geq3$ there exists $N\in\N$ such that if a polytope $P$ of dimension $n$ has $k>N$ facets, its adjoint is a singular hypersurface.
\end{thm}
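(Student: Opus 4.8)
The strategy is to reduce the general statement to the case of simple facet hyperplane arrangements, where \Cref{lem:nlines_sing} applies, and then to show that for sufficiently many facets one can always find $n$ residual lines meeting in a point. The guiding principle, generalizing \Cref{prop:adj_sing}, is a counting argument: a polytope with $k$ facets in dimension $n$ has boundedly many residual $(n-2)$-planes arising from non-adjacent facet pairs, but the combinatorics forces concurrences once $k$ is large.

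\textbf{First steps.} I would first treat polytopes $P$ whose facet hyperplane arrangement $\mathcal{H}_P$ is simple. Here the key is to bound the \emph{facet degrees}: using the generalized Euler relation and Dehn--Sommerville-type identities for simple $n$-polytopes, the average facet degree is bounded (in terms of $n$), so there is a facet $F_0$ of degree at most some constant $\delta_n$. Now $F_0$ together with its adjacent facets accounts for at most $\delta_n+1$ facets, so if $k$ is large there remain many facets not adjacent to $F_0$. Each such facet meets the hyperplane of $F_0$ in a residual $(n-2)$-plane lying in $\mathcal{R}(P)$. The combinatorial heart is then to show that among these many residual $(n-2)$-planes through (the hyperplane of) $F_0$, one can extract $n$ residual \emph{lines} that are concurrent: intersecting residual planes in a common flag or iterating the coloring argument from \Cref{prop:adj_sing} across dimensions. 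Once $n$ concurrent residual lines are produced, \Cref{lem:nlines_sing} gives the singularity.

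\textbf{Removing the simplicity hypothesis by degeneration.} Since the theorem is stated for arbitrary polytopes of dimension $n$ (not only those with simple $\mathcal{H}_P$), I would next argue by the approximation/limit principle recalled at the end of \Cref{sec:2}: any polytope $P$ is a limit of polytopes with simple facet hyperplane arrangements, and the adjoint of $P$ is the corresponding limit of adjoints. A polytope with a non-simple arrangement has \emph{at least as many} coincidences among its facet hyperplanes, so its residual arrangement is ``more degenerate'' and the concurrence phenomenon can only be reinforced; concretely, if every nearby simple perturbation has a singular adjoint forced by concurrent residual lines, then passing to the limit the adjoint remains singular. Care is needed because singularity is a closed condition, so one must check that the singular points do not escape to infinity or cancel in the limit—this is where having an \emph{explicit} concurrent intersection point (a stable, bounded locus) rather than an abstract singularity is important.

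\textbf{Main obstacle.} The genuinely hard part is the higher-dimensional combinatorial extraction of $n$ concurrent residual lines. In dimension three the planarity of the $1$-skeleton and the $K_5$/$K_4$ obstructions (as in \Cref{prop:adj_sing}) do the work, but for $n\geq 4$ there is no such clean planarity input, and residual lines arise as intersections of $n$ facet hyperplanes rather than two. I would therefore aim to set up an inductive scheme: restrict to the hyperplane of $F_0$, where the induced arrangement of the (many) non-adjacent facets behaves like a simple arrangement in $\mathbb{P}^{n-1}$ with a controlled number of ``facets,'' and apply the $(n-1)$-dimensional concurrence statement to find $n-1$ concurrent residual lines within this hyperplane, then lift to $n$ concurrent lines in $\mathbb{P}^n$. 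Making this induction precise—in particular verifying that the induced arrangement genuinely inherits the hypotheses and that the lower-dimensional concurrence really produces lines of $\mathcal{R}(P)$ and not merely of the restricted arrangement—is the crux, and the bound $N=N(n)$ will emerge from tracking the facet-degree constant $\delta_n$ through this recursion.
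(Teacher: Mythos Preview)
Your reduction to the simple case via degeneration is correct and matches the paper; note however that singularity is a Zariski-closed condition on hypersurfaces, so your worry about singular points ``escaping to infinity or cancelling in the limit'' is unfounded---the limit of singular hypersurfaces is singular, full stop.

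The combinatorial core of your argument, however, has a genuine gap: the claim that the average facet degree is bounded in terms of $n$ alone is \emph{false} for $n\geq4$. For instance, the product of two $m$-gons is a simple $4$-polytope with $k=2m$ facets in which \emph{every} facet has degree $m+2\approx k/2$; more generally the number of ridges of a simple $n$-polytope with $k$ facets can grow like $k^{\lfloor n/2\rfloor}$, so the average (and even the minimum) facet degree can grow like $k^{\lfloor n/2\rfloor-1}$. Thus the $3$-dimensional strategy of \Cref{prop:adj_sing}---finding a facet of bounded degree and arguing among its non-neighbors---does not generalize. Your proposed inductive scheme compounds the difficulty: the arrangement induced on a facet hyperplane is not the facet arrangement of any polytope, so the inductive hypothesis does not apply as stated, and you acknowledge but do not resolve this.

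The paper takes a different route that bypasses facet degrees entirely. For simple $\mathcal{H}_P$ it compares two counts: there are $\binom{k}{n}\sim k^n$ intersection points of $n$ facet hyperplanes, of which only $O(k^{\lfloor n/2\rfloor})$ are vertices by the Upper Bound Theorem, so the number of residual points grows like $k^n$. On the other hand, the number of edges of $P$ is also $O(k^{\lfloor n/2\rfloor})$, and each edge-line contains only $k-n-1$ residual points, so at most $O(k^{\lfloor n/2\rfloor+1})$ residual points lie on a line through an edge. Since $\lfloor n/2\rfloor+1<n$ for $n\geq3$, for large $k$ there exists a residual point lying on no edge-line; the $n$ lines through it obtained by omitting one of the $n$ hyperplanes at a time are then all residual, and \Cref{lem:nlines_sing} finishes.
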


\begin{proof}
    We first prove the claim in the case that $\cH_P$ is simple.
    For a simple $n$-dimensional polytope $P$ with $k$ facets the number of faces of dimension $i$ grows at most as $O(k^{n-i})$ for $i> \lfloor \frac{n}{2} \rfloor$ and at most as $O(k^{\lfloor \frac{n}{2} \rfloor})$ for $i\leq \lfloor \frac{n}{2} \rfloor$, see \cite{barvinok}*{Theorem VI.7.4}.
    Since $\mathcal{H}_P$ is simple, the number of zero-dimensional intersections of hyperplanes in $\mathcal{H}_P$ is $\binom{k}{n}$. Since at most $O(k^{\lfloor \frac{n}{2} \rfloor})$ of them are vertices of $P$, the number of residual points is $\binom{k}{n}-O(k^{\lfloor \frac{n}{2} \rfloor})$ which grows like $k^n$. Now, since $n\geq 3$, the number of edges of $P$ is at most $O(k^{\lfloor \frac{n}{2} \rfloor})$. Each edge is contained in the intersection of $n-1$ hyperplanes in $\mathcal{H}_P$. Intersecting with two of the remaining $k-n+1$ hyperplanes yields two vertices of $P$ on this edge. The number of residual points on the line spanned by this edge is thus $k-n-1$. Therefore, the number of residual points that lie on at least one line containing an edge of $P$ is at most $O(k^{\lfloor\frac{n}{2}\rfloor+1})$. Since $\lfloor\frac{n}{2}\rfloor+1 < n$ for $n\geq 3$, there exists $N\in\mathbb{N}$ such that for $k>N$ there is a residual point that is not contained in the line spanned by any edge of $P$. It remains to note that every residual point, being an intersection of $n$ hyperplanes in $\mathcal{H}_P$, is also an intersection of $n$ lines, each of which is either residual or contains an edge of $P$. In particular, any residual point that is not contained in the line spanned by any edge of $P$, is contained in $n$ residual lines. By \Cref{lem:nlines_sing} any such vertex is a singular point of the adjoint. This proves the claim for $\cH_P$ simple.

    If $P$ is an arbitrary $n$-dimensional polytope with $k>N$ facets, then it is the limit of a sequence of $n$-dimensional polytopes $(P_i)_{i\in\N}$ with $k$ facets whose facet hyperplane arrangement $\cH_P$ is simple. The adjoint of each $P_i$ is singular and their limit is the adjoint of $P$ by \cite{KohnRanestad2019AdjCurves}*{Corollary 1}. Because being singular is a closed condition, it follows that the adjoint of $P$ is singular.
\end{proof}

In \Cref{ex:counter} and \Cref{prop:45quadrics} we saw that in dimension at least four adjoint hypersurfaces of polytopes do not necessarily have a determinantal representation. In the following two sections we will therefore concentrate on dimensions two and three. In our examples, the obstruction for a determinantal representation was the adjoint hypersurface being smooth. This leads to the following question.

\begin{frag}
Do adjoint hypersurfaces of polytopes that have a sufficiently high-dimensional singular locus have determinantal representations?
\end{frag}

\section{Two Dimensions} \label{sec:3}
In this section we study determinantal representations of adjoint curves of convex polygons. Since any projective polygon lives in some affine chart of~$\mathbb{P}^2$, in what follows we work in affine space. A determinantal representation of the adjoint in~$\mathbb{P}^2$ can be obtained from one in the chosen affine chart by homogenizing the entries of the matrix. The following is the main result of this section.

\begin{thm} \label{thm:recursiveDetRep}
 Let $P$ be a convex polygon with $n$ vertices $v_1, \dots, v_n$. There exists a definite symmetric determinantal representation $M$ of $\alpha_P$ satisfying the following:
 \begin{enumerate}
     \item $M$ is \emph{tridiagonal}, i.e., if $M_{ij} \neq 0$, then $|i-j| \leq 1$.
     \item The $k$-th leading submatrix of $M$ is a determinantal representation of the adjoint of the convex hull of $v_1, \dots, v_{k+3}$.
 \end{enumerate}
\end{thm}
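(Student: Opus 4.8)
The plan is to prove the statement by induction on $n$, building the symmetric tridiagonal matrix one row and column at a time. For a symmetric tridiagonal matrix $M$ with diagonal entries $a_1,\dots,a_{n-3}$ and off-diagonal entries $b_1,\dots,b_{n-4}$, the leading principal minors $D_k=\det M_{[k]}$ obey the three-term recurrence $D_k=a_kD_{k-1}-b_{k-1}^2D_{k-2}$ (with $D_0=1$, $D_{-1}=0$). Hence tridiagonality together with property (2) is, up to the usual scaling of adjoints, equivalent to the assertion that the adjoints of the nested sub-polygons $P^{(m)}:=\conv(v_1,\dots,v_m)$, $m=3,\dots,n$, satisfy
\[
\alpha_{P^{(m)}}=a\cdot\alpha_{P^{(m-1)}}-b^2\cdot\alpha_{P^{(m-2)}}
\]
with $a,b$ \emph{real} linear forms. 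Since $\deg\alpha_{P^{(m)}}=m-3$, the degrees match exactly, so the whole theorem reduces to producing such a recurrence with a genuine real square $b^2$ and then verifying definiteness.

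To derive the recurrence I would use the additivity of canonical forms under subdivision. Writing $P^{(m)}=P^{(m-1)}\cup T_m$, where $T_m=\conv(v_1,v_{m-1},v_m)$ is glued to $P^{(m-1)}$ along the diagonal $\overline{v_1v_{m-1}}$, gives $f_{P^{(m)}}=f_{P^{(m-1)}}+f_{T_m}$ for the canonical functions. Let $\ell_d$ be the line through $v_1,v_{m-1}$, let $\ell',\ell''$ be the two new edge lines, let $L$ be the product of the edge lines common to $P^{(m-1)}$ and $P^{(m)}$, and let $c=\alpha_{T_m}$ be the constant triangle adjoint. Clearing the edge-line denominators turns the additivity into the two-term identity $\ell_d\,\alpha_{P^{(m)}}=\ell'\ell''\,\alpha_{P^{(m-1)}}+cL$. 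Applying the same identity one level lower, writing $s:=\ell_{v_{m-2}v_{m-1}}$ and $\ell_{d'}:=\overline{v_1v_{m-2}}$, and using that $L=s\,L'$ differs from the lower product $L'$ by exactly the shared edge line $s$, I eliminate the terms $cL$ and $c'L'$ and obtain
\[
\ell_d\Bigl(\alpha_{P^{(m)}}+\tfrac{c}{c'}\,s^2\,\alpha_{P^{(m-2)}}\Bigr)=\bigl(\ell'\ell''+\tfrac{c}{c'}\,s\,\ell_{d'}\bigr)\,\alpha_{P^{(m-1)}}.
\]
Now the key point: $\ell_d$ is an edge line of $P^{(m-1)}$, and the residue axiom for canonical forms (the residue of $f_{P^{(m-1)}}$ along a facet is the canonical function of that facet, hence nonzero) forces $\ell_d\nmid\alpha_{P^{(m-1)}}$. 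Since $\ell_d$ is irreducible it must divide the degree-two bracket on the right; cancelling it yields the recurrence with $a=\ell_d^{-1}(\ell'\ell''+\tfrac{c}{c'}s\,\ell_{d'})$ a linear form and $b^2=\tfrac{c}{c'}\,s^2$. As $c,c'$ are the adjoints of triangles, hence positive by Warren's formula, $\tfrac{c}{c'}>0$ and $b=\sqrt{c/c'}\,s$ is a genuine real linear form, in fact a scalar multiple of the edge line $\overline{v_{m-2}v_{m-1}}$.

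With the recurrence in hand I would assemble $M$ by reading off $a_k$ and $b_{k-1}$ from the recurrence at level $m=k+3$; by construction $M$ is real, symmetric and tridiagonal, and its leading $k\times k$ minor equals $\alpha_{P^{(k+3)}}$, giving (1) and (2). For definiteness I would fix a point $e$ in the interior of the triangle $P^{(3)}=\conv(v_1,v_2,v_3)$; then $e\in\Int P^{(m)}$ for every $m$, and since the canonical function of a convex polytope is positive on its interior, each leading principal minor $\det M(e)_{[m-3]}=\alpha_{P^{(m)}}(e)$ is positive. By Sylvester's criterion $M(e)$ is positive definite, so $M$ is definite. This argument needs no smoothness and therefore treats all polygons uniformly; alternatively, definiteness follows from the interlacing criterion of \Cref{thm:interlmakesdef}, the point being that consecutive sub-adjoints interlace. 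The main obstacle is the derivation of the recurrence, and within it the two arithmetic facts that make it produce a bona fide symmetric representation: the exact divisibility by $\ell_d$ (resting on the adjoint not containing edge lines) and the positivity $c/c'>0$ that turns the subdiagonal coefficient into a real square. The remaining points — matching degrees, the tridiagonal bookkeeping, and Sylvester's criterion — are routine.
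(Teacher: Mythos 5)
Your proposal is correct, but it reaches the theorem by a genuinely different route than the paper. Both arguments share the same skeleton: tridiagonality plus property (ii) amounts to a three-term recurrence $\alpha_{P^{(m)}}=a\cdot\alpha_{P^{(m-1)}}-b^2\cdot\alpha_{P^{(m-2)}}$ with $a,b$ real linear forms, and in both the off-diagonal entry is a multiple of the edge line $\overline{v_{m-2}v_{m-1}}$. The difference is the engine producing the recurrence. The paper proceeds geometrically: it shows that adjoints of sub-quadrilaterals are tangent lines of the adjoint curve at residual points (\Cref{lem:AdjTangent}), upgrades this to contact-curve and divisor identities (\Cref{thm:adjointcontact}, \Cref{prop:quadrilrest}), and then applies Max Noether's fundamental theorem to obtain the identity $\lambda\alpha_Q\alpha_{n-1}-\mu l_{n-1}^2\alpha_{n-2}=\alpha_n$ in \Cref{prop:almostdetrep}; this needs the adjoint curve to be smooth, so singular adjoints are treated by a separate degeneration argument, and definiteness is certified by the Plaumann--Vinzant interlacing criterion (\Cref{thm:interlmakesdef} together with \Cref{cor:Interlacer}). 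You instead derive the recurrence by elementary algebra: additivity of canonical forms under the ear-subdivision $P^{(m)}=P^{(m-1)}\cup T_m$, elimination between two consecutive levels, and a UFD divisibility argument anchored in the residue axiom ($\ell_d\nmid\alpha_{P^{(m-1)}}$); definiteness then follows from Sylvester's criterion and interior positivity. Your route buys uniformity --- no smoothness hypothesis, no B\'ezout/Noether machinery, no limit argument --- at the price of importing two facts from the positive-geometry literature: subdivision additivity (spurious-pole cancellation) and non-vanishing of adjoints on the interior (for polygons this follows from the hyperbolicity result the paper cites). The paper's route, in exchange, establishes the tangency and contact-divisor statements, which are of independent interest. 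One small imprecision to flag: with the orientation conventions under which your two cleared-denominator identities hold simultaneously (shared edges must be represented by literally the same linear forms, and the diagonals oriented positively on the larger sub-polygons), the constants $c,c'$ both come out \emph{negative} rather than positive, since each diagonal is negatively oriented on its ear triangle; what your argument actually needs is only $c/c'>0$, which does hold because the two signs agree, so the slip is cosmetic rather than a gap.
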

We first set up some notation.  In what follows, we interpret the indices modulo $n$: if $i\not\in\{1,\ldots,n\}$, then we replace it with the congruent element in $\{1,\ldots,n\}$. 
Let $P \subseteq \R^2$ (with coordinates $x_1, x_2$) be a convex polygon with $n$ vertices and edges. Let $V(P) = \{v_i\}_{i=1}^n$ denote its set of vertices (ordered counterclockwise), and let $E(P) = \{e_i\}_{i=1}^n$ denote its set of edges. Here $e_i$ is the edge between $v_{i-1}$ and $v_i$. Furthermore, we define linear polynomials $l_i = \langle w_i, x \rangle + c_i$ such that $e_i \subseteq L_i := \cV(l_i)$ for all $i$ and such that $P$ is the intersection of the half-planes defined by $l_i \geq 0$. We say that a polygon $P'$ is a \emph{subpolygon} of $P$, if $V(P') \subseteq V(P)$. 
To have a convenient formula for the adjoint polynomial $\alpha_P$ at hand, we now state \cite{PavlovTelen2024PosGeom}*{Proposition 2.1} for the case of polygons in the plane.

\begin{lem}
    In the notation above, the adjoint polynomial of the convex polygon $P$ is given by
    \begin{equation} \label{eq:adjpol_expl}
        \alpha_P(x) = \sum_{i = 1}^n \det(w_i, w_{i+1}) \prod_{j \not\in\{ i, i+1\}} l_j.
    \end{equation}
\end{lem}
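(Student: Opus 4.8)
The plan is to verify directly that the right-hand side of the asserted identity, which I denote by
\[
F(x)=\sum_{i=1}^n \det(w_i,w_{i+1})\prod_{j\notin\{i,i+1\}} l_j(x),
\]
is the adjoint. By \cite{KohnRanestad2019AdjCurves}*{Theorem 1}, when $\mathcal{H}_P$ is simple the polynomial $\alpha_P$ is, up to a scalar, the unique polynomial of degree $n-3$ vanishing on the residual arrangement $\cR(P)$; for a polygon $\cR(P)$ consists precisely of the intersection points $L_a\cap L_b$ of non-adjacent edge lines (adjacent ones meet in a vertex, which is a face of $P$). I would therefore show that $F$ (i) vanishes on $\cR(P)$, (ii) has degree at most $n-3$, and (iii) is not identically zero, and then conclude $F=c\,\alpha_P$ with $c\neq0$ from this uniqueness. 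Since $\alpha_P$ is only defined up to scalar this proves the formula; the precise normalization is \cite{PavlovTelen2024PosGeom}*{Proposition 2.1}, of which the statement is the planar case.

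For (i), fix $p=L_a\cap L_b\in\cR(P)$ with $a,b$ non-adjacent, so $l_a(p)=l_b(p)=0$. A summand survives at $p$ only if both $l_a$ and $l_b$ are absent from its product $\prod_{j\notin\{i,i+1\}}l_j$, that is, only if $\{a,b\}=\{i,i+1\}$; but this forces $a,b$ to be adjacent, a contradiction, so $F(p)=0$. For (iii) I would evaluate at a vertex $v_m=L_m\cap L_{m+1}$, where now exactly the index $i=m$ yields a surviving term:
\[
F(v_m)=\det(w_m,w_{m+1})\prod_{j\notin\{m,m+1\}} l_j(v_m).
\]
Here $\det(w_m,w_{m+1})\neq0$ since the inner normals of two adjacent edges are linearly independent, and $l_j(v_m)>0$ for $j\notin\{m,m+1\}$ because $v_m\in P$ lies on no further edge line under the simplicity assumption; hence $F(v_m)>0$ and in particular $F\not\equiv0$ (this positivity at the vertices is also consistent with the definiteness asserted in \Cref{thm:recursiveDetRep}).

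The crux is (ii): a priori $F$ has degree $n-2$, so I must show that its top-degree part $T(x)=\sum_{i=1}^n\det(w_i,w_{i+1})\prod_{j\notin\{i,i+1\}}\langle w_j,x\rangle$ vanishes identically. The tool I would use is the planar three-vector relation $\det(w_{i+1},x)\,w_i-\det(w_i,x)\,w_{i+1}+\det(w_i,w_{i+1})\,x=0$, which holds for all vectors in $\R^2$ because any three of them are linearly dependent. Taking the inner product with $x$ and dividing by $\langle w_i,x\rangle\langle w_{i+1},x\rangle$ gives
\[
\frac{\det(w_i,w_{i+1})}{\langle w_i,x\rangle\langle w_{i+1},x\rangle}=\frac{1}{\langle x,x\rangle}\left(\frac{\det(w_i,x)}{\langle w_i,x\rangle}-\frac{\det(w_{i+1},x)}{\langle w_{i+1},x\rangle}\right).
\]
Summing cyclically over $i$, the right-hand side telescopes to $0$, so $\sum_i \det(w_i,w_{i+1})/(\langle w_i,x\rangle\langle w_{i+1},x\rangle)=0$ as a rational function; clearing the denominator $\prod_{j=1}^n\langle w_j,x\rangle$ yields $T\equiv0$, whence $\deg F\le n-3$. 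This cancellation is already visible for the triangle, where $F$ is an apparent linear form that collapses to the constant adjoint.

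Homogenizing, $F$ becomes a homogeneous polynomial of degree $n-3$ vanishing on $\cR(P)$ and not identically zero, so \cite{KohnRanestad2019AdjCurves}*{Theorem 1} gives $F=c\,\alpha_P$ with $c\neq0$, as desired. I expect (ii) to be the only real obstacle, since (i) and (iii) are term-counting arguments whereas the degree drop genuinely uses the two-dimensionality through the linear-dependence identity. The remaining case of a non-simple arrangement would then follow by continuity, as the adjoint of any polygon is a limit of adjoints of polygons with simple arrangements by \cite{KohnRanestad2019AdjCurves}*{Corollary 1}.
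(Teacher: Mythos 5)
Your argument is necessarily a different route from the paper's, because the paper gives no proof of this lemma at all: it is quoted as the planar specialization of \cite{PavlovTelen2024PosGeom}*{Proposition 2.1}. What you propose instead is a self-contained derivation from the uniqueness statement of \cite{KohnRanestad2019AdjCurves}*{Theorem 1}, and its core is sound: the term-by-term vanishing at affine residual points, the nonvanishing at a vertex under simplicity, and, crucially, the degree drop in step (ii). I checked the planar dependence identity $\det(w_{i+1},x)w_i-\det(w_i,x)w_{i+1}+\det(w_i,w_{i+1})x=0$, the resulting partial-fraction identity, and the cyclic telescoping; they are correct, and this is indeed where two-dimensionality enters. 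Since the paper treats $\alpha_P$ as defined only up to a scalar, proving $F=c\,\alpha_P$ with $c\neq0$ is an acceptable formulation of the lemma.

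There is, however, one genuine gap: you treat $\cR(P)$ as if all of its points were affine. The residual arrangement lives in $\pp^2$, and it contains points on the line at infinity precisely when $P$ has non-adjacent parallel edges; this happens for simple arrangements (a rectangle, the regular hexagon), so it is not subsumed by your closing remark, which invokes continuity only for non-simple arrangements. Your step (i) does apply verbatim to the degree-$(n-2)$ homogenization $\tilde F=\sum_i\det(w_i,w_{i+1})\prod_{j\notin\{i,i+1\}}\tilde l_j$ at such a point $q$ at infinity, but that is vacuous information: by step (ii) one has $\tilde F=x_0\,G$ with $G$ of degree $n-3$, and $\tilde F(q)=0$ with $x_0(q)=0$ says nothing about $G(q)$. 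Equivalently, $G|_{x_0=0}$ is the top homogeneous part $F_{n-3}$ of the affine polynomial $F$, and your argument never shows that $F_{n-3}$ vanishes in the common direction of a parallel pair. The failure is not cosmetic: for the regular hexagon, $\cR(P)$ consists of nine points of which three lie at infinity, and the space of cubics through the six affine ones is (at least) four-dimensional, so \cite{KohnRanestad2019AdjCurves}*{Theorem 1} cannot be applied to $G$ as you do.

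The gap is fillable with tools you already have. Either check directly that $F_{n-3}$ vanishes at a direction $v$ with $\langle w_a,v\rangle=\langle w_b,v\rangle=0$ for a non-adjacent parallel pair $a,b$: expanding $F_{n-3}=\sum_i\det(w_i,w_{i+1})\sum_{k\notin\{i,i+1\}}c_k\prod_{j\notin\{i,i+1,k\}}\langle w_j,\cdot\rangle$, only the four terms with $(i,k)\in\{(a-1,b),(a,b),(b-1,a),(b,a)\}$ survive at $v$, and they cancel in pairs by your same dependence identity applied to the triples $(w_{a-1},w_a,w_{a+1})$ and $(w_{b-1},w_b,w_{b+1})$ together with $\langle w_a,v\rangle=\langle w_b,v\rangle=0$. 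Alternatively, extend your continuity argument to every polygon whose residual arrangement meets the line at infinity, not just to non-simple ones; for that it is convenient to replace your vertex evaluation in (iii) by the observation that every summand of $F$, hence $F$ itself, is strictly positive at any interior point of $P$, since this nonvanishing certificate survives degeneration, whereas positivity at a vertex need not.
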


It is clear from this description that if we replace $l_i$ by $\gamma l_i$ with $\gamma >0$, then $\alpha_P$ changes exactly by the factor $\gamma$. This reflects the fact that the adjoint polynomial of $P$ is only defined up to multiplication by a scalar, cf. \Cref{sec:2}. In what follows, if we claim that a determinantal representation of $\alpha_P$ is defined over a certain field, we mean that one can choose a scalar multiple of $\alpha_P$ in a way that this is true.

Proving \Cref{thm:recursiveDetRep} requires a good understanding of the intersection of the adjoint curve of $P$, the edges of $P$, and the adjoint curves of certain subpolygons. We will prove several results pertaining to this. The first one identifies certain tangent lines of the adjoint of $P$ as the adjoint curves of sub-quadrilaterals of $P$.
\begin{lem} \label{lem:AdjTangent}
    Let $i,j\in\{1,\ldots,n\}$ such that $\{q\} = L_i \cap L_j\subseteq\cR(P)$, and let $Q$ denote the quadrilateral spanned by the vertices $v_{i-1}, v_i, v_{j-1}, v_j$. Then $\cV(\alpha_{Q})$ is the tangent line to $\cV(\alpha_P)$ at $q$.
\end{lem}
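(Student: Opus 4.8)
The plan is to prove Lemma~\ref{lem:AdjTangent} by combining three facts: the residual point $q$ lies on both $\cV(\alpha_P)$ and $\cV(\alpha_Q)$; the line $\cV(\alpha_Q)$ is the tangent line to $\cV(\alpha_Q)$ at $q$ in a trivial sense (since $\alpha_Q$ is itself linear, as $Q$ is a quadrilateral so $\alpha_Q$ has degree $4-2-1=1$); and the tangent directions of $\cV(\alpha_P)$ and $\cV(\alpha_Q)$ agree at $q$. So the main content is a \emph{gradient computation}: I want to show that $\nabla\alpha_P(q)$ and $\nabla\alpha_Q(q)$ are parallel.

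First I would record that $q=L_i\cap L_j$ lies in $\cR(P)$, so $l_i(q)=l_j(q)=0$ while $l_m(q)\neq 0$ for all other indices $m$ (this uses that $\mathcal{H}_P$ is simple, so $q$ lies on exactly the two facet hyperplanes $L_i,L_j$). Next I would apply the explicit formula~\eqref{eq:adjpol_expl} and compute the gradient at $q$. Every summand $\det(w_m,w_{m+1})\prod_{r\notin\{m,m+1\}} l_r$ contains at least one of the factors $l_i,l_j$ unless $\{m,m+1\}\supseteq\{i,j\}$, i.e.\ unless the two omitted indices are exactly $\{i,j\}$. When differentiating a product and then evaluating at $q$, only those terms survive in which \emph{every} factor vanishing at $q$ is either omitted or hit by the derivative. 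Since two independent linear forms $l_i,l_j$ vanish at $q$, a product missing only one of them contributes nothing to the gradient (one undifferentiated vanishing factor remains), and a product containing both $l_i$ and $l_j$ contributes nothing either (at least one vanishing factor survives after a single differentiation). The only surviving contributions come from summands omitting exactly $\{i,j\}$: concretely the terms indexed by $m\in\{i,j\}$ with $\{m,m+1\}$ or the appropriate neighbouring pairs realizing the omission of both $i$ and $j$. I expect that, after the dust settles, $\nabla\alpha_P(q)$ is a positive scalar multiple of $w_i\,l_j'(\cdots)+w_j\,l_i'(\cdots)$-type combination, which should coincide up to scale with the gradient of the linear form $\alpha_Q$.

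To finish, I would compute $\alpha_Q$ directly from~\eqref{eq:adjpol_expl} applied to the quadrilateral $Q$ with vertices $v_{i-1},v_i,v_{j-1},v_j$ and its four edge forms, and check that its gradient (a constant vector, since $\alpha_Q$ is affine-linear) is the same combination of $w_i$ and $w_j$ up to the nonvanishing scalar factors $\prod_{r\notin\{i,j\}}l_r(q)$. Because $\alpha_Q$ is linear and passes through $q$, the curve $\cV(\alpha_Q)$ is precisely the line through $q$ in the direction orthogonal to this common gradient, which is exactly the tangent line to $\cV(\alpha_P)$ at $q$ provided $q$ is a smooth point of $\cV(\alpha_P)$.

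The main obstacle I anticipate is bookkeeping the surviving gradient terms carefully: I must verify that $q$ is a \emph{smooth} point of $\cV(\alpha_P)$ (so that the tangent line is well-defined), i.e.\ that $\nabla\alpha_P(q)\neq 0$, and I must track the precise edge forms of $Q$ relative to those of $P$ so that the two gradient vectors match not just in direction but consistently. Verifying smoothness amounts to checking that the surviving combination of $w_i$ and $w_j$ is nonzero, which follows from $\det(w_i,w_j)\neq 0$ (the two edges meeting at the residual point $q$ are not parallel, as $L_i\neq L_j$) together with the nonvanishing of the product factors; this is precisely the computation that also pins down $\cV(\alpha_Q)$, so the two halves of the argument reinforce one another.
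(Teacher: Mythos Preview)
Your overall plan---compare the gradient of $\alpha_P$ at $q$ with the constant gradient of the linear form $\alpha_Q$---is exactly the right idea, and it is essentially what the paper does after normalizing coordinates so that $q=0$ (computing the linear part $l_\alpha$ of $\alpha_P$ is the same as computing $\nabla\alpha_P(q)$). However, your gradient bookkeeping contains an error that derails the computation.

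You claim that a summand of \eqref{eq:adjpol_expl} that omits exactly one of $l_i,l_j$ contributes nothing to $\nabla\alpha_P(q)$ because ``one undifferentiated vanishing factor remains''. This is false: if the product contains, say, $l_j$ but not $l_i$, then the product rule yields a single surviving term, namely the one where the derivative lands on $l_j$; that term equals $w_j$ times a product of \emph{nonvanishing} factors $l_r(q)$. Since $i$ and $j$ are non-consecutive (the edges $e_i,e_j$ are non-adjacent, as $q\in\cR(P)$), no summand omits \emph{both} $l_i$ and $l_j$. Thus by your reasoning $\nabla\alpha_P(q)$ would be zero and $q$ would be singular---contradicting the very smoothness you intend to verify. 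The correct picture is that exactly the four summands with $m\in\{i-1,i,j-1,j\}$ contribute, and $\nabla\alpha_P(q)$ is a specific combination $A\,w_i+B\,w_j$ with $A,B$ involving $\det(w_{j-1},w_j),\det(w_j,w_{j+1}),\det(w_{i-1},w_i),\det(w_i,w_{i+1})$ and the constants $l_r(q)$.

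There is a second gap: you still have to match this with $\nabla\alpha_Q$. The quadrilateral $Q$ has two edges on $L_i,L_j$ but its other two edges are \emph{diagonals} of $P$, whose normal vectors are not $w_i$ or $w_j$. The paper handles this by observing that each diagonal passes through a vertex of $P$ where two consecutive edge lines meet, so its defining form is (up to scalar) the difference of two consecutive $l_r$'s; e.g.\ $\gamma\,l = l_1-l_2$. These concurrence relations are precisely what converts the diagonal normals into combinations of the $w_r$'s appearing in $\nabla\alpha_P(q)$, and without them the comparison cannot be completed. The paper also handles smoothness of $\cV(\alpha_P)$ at $q$ by citing \cite{KohnEtAl2024AdjCurves}*{Theorem 3.8} rather than deriving it from the computation.
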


\begin{proof}
    We first note that, since $q$ is a real point, the adjoint curve $\cV(\alpha_P)$ is smooth in $q$ by \cite{KohnEtAl2024AdjCurves}*{Theorem 3.8}. Thus the notion of a tangent line is well-defined. We then proceed by recalling that the Zariski closure in the affine plane of the edge $e_i$ of $P$ is the vanishing locus of the linear form $l_i = \langle w_i, x \rangle + c_i$. We also recall that, by construction, $P$ is contained in the half space defined by $l_i \geq 0$.
    Next we note that, without loss of generality, we can make the following assumptions:
    \begin{enumerate}
        \item $i = 2$ and $2(j-1) \leq n$ after a cyclic permutation of the labels and exchanging $i$ and $j$ if necessary;
        \item $q = 0$ is the origin after a translation;
        \item $v_2 = (0,1), v_{j-1} = (1,0)$ after a change of basis;
        \item $l_2 = x_1, l_j = x_2$ after scaling by a positive real number.
    \end{enumerate}
    In what follows, we denote by $l_\alpha$ the linear term in $\alpha_P$. There are two cases to distinguish, either $j = 4 = i + 2$ or $j > 4$. 
    \begin{enumerate}
    \item[\textbf{Case 1}:] $j = 4$. In this case $Q$ is given by four consecutive vertices $v_1,v_2,v_3,v_4$ of $P$. In particular, three of the edges of $Q$ are edges of $P$, and the remaining edge is a diagonal of $P$.
    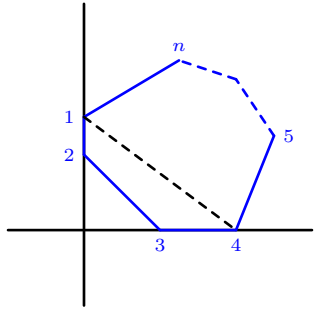
\begin{figure}[h]
\begin{tikzpicture}[line cap=round,line join=round,x=1cm,y=1cm]
\draw [line width=1pt] (-1,0)-- (3,0);
\draw [line width=1pt] (0,-1)-- (0,3);
\draw [line width=1pt, color=blue] (1.25,2.25)-- (0,1.5);
\node[anchor=east, color=blue] at (0,1.5) {\scriptsize1};
\draw [line width=1pt, color=blue] (0,1.5)-- (0,1);
\node[anchor=east, color=blue] at (0,1) {\scriptsize2};
\draw [line width=1pt, color=blue] (0,1)-- (1,0);
\node[anchor=north, color=blue] at (1,0) {\scriptsize3};
\draw [line width=1pt, color=blue] (1,0)-- (2,0);
\node[anchor=north, color=blue] at (2,0) {\scriptsize4};
\draw [line width=1pt, color=blue] (2,0)-- (2.5,1.25);
\node[anchor=west, color=blue] at (2.5,1.25) {\scriptsize5};
\draw [line width=1pt, color=blue, dashed] (2.5,1.25)-- (2,2);
\draw [line width=1pt, color=blue, dashed] (2,2)-- (1.25,2.25);
\node[anchor=south, color=blue] at (1.25,2.25) {\scriptsize{$n$}};
\draw [line width=1pt, dashed] (0,1.5)-- (2,0);
\end{tikzpicture}
\caption{Location of vertices and edges after change of coordinates in case 1. The dashed blue line indicates several vertices omitted in the figure. The dashed black line cuts off a subquadrilateral of $P$.}
    \label{fig:case1}
    \end{figure}
    In this case we have $l_3 = x_1+x_2 - 1$ (up to scaling). Note that $l_2$ and $l_4$ are both of pure degree one, and hence to compute $l_\alpha$ we may ignore all summands in \Cref{eq:adjpol_expl} that do not correspond to one of the vertices $v_{1},v_2,v_{3}, v_4$. Let $\sigma = \prod_s c_s \neq 0$ where the product is over all indices corresponding to edges that are neither equal nor adjacent to the edges $e_2$ and $e_4$. Writing $w_m = (w_m^1, w_m^2) \in \R^2$ and using the four assumptions above to simplify \Cref{eq:adjpol_expl} we may explicitly read off $l_\alpha$:
    \begin{equation} \label{eq:case1_LinTermRed}
        \begin{split}
        l_\alpha &= w_1^2 c_5 \sigma x_2 + c_1c_5 \sigma x_2 + c_1c_5 \sigma x_1 + w_5^1 c_1 \sigma x_1 \\
        &= \sigma c_5 (c_1 + w_1^2) x_2 + \sigma c_1 (c_5 + w_5^1)x_1.
        \end{split}
    \end{equation}
    Next we consider $\alpha_{Q}$. Let $l = \langle w, x \rangle + c$ denote the line connecting $v_4$ and $v_1$, such that $Q$ is given by the conditions $l \geq 0, l_i \geq 0, i = 2,3,4$. From this, we may compute $\alpha_{Q}$ in an analogous fashion as $l_\alpha$:
    \begin{equation} \label{eq:case1_tRed}
        \alpha_{Q} = (c + w^2)x_2 + (c + w^1)x_1.
    \end{equation}
    Since the line defined by $l$ has a common intersection with those defined by $l_1$ and $l_2$, we may write $\gamma_1 l = l_1 - l_2$ with $\gamma_1 > 0$ after possibly subsuming a scaling factor in $l_1$. Similarly, we have $\gamma_5 l = - l_4 + l_5$ with $\gamma_5 > 0$. Resolving these identities using our four assumptions yields
    \begin{equation*}
        c_1 = \gamma_1 c, \qquad c_5 = \gamma_5 c, \qquad w = (\gamma_5^{-1} w_5^1, \gamma_1^{-1} w_1 ^2).
    \end{equation*}
    In particular, we obtain
    \begin{equation*}
        l_\alpha = \sigma \gamma_1 \gamma_5 ((c + w^2)x_2 + (c + w^1)x_1) = \sigma \gamma_1 \gamma_5 \alpha_{Q}
    \end{equation*}
    from \Cref{eq:case1_LinTermRed} and \Cref{eq:case1_tRed}. This proves tangency.

        \begin{figure}[ht]
\begin{tikzpicture}[line cap=round,line join=round,x=1cm,y=1cm]
\draw [line width=1pt] (-1,0)-- (3,0);
\draw [line width=1pt] (0,-1)-- (0,3);
\draw [line width=1pt, color=blue] (1.25,2.25)-- (0,1.5);
\node[anchor=east, color=blue] at (0,1.5) {\scriptsize1};
\draw [line width=1pt, color=blue] (0,1.5)-- (0,1);
\node[anchor=east, color=blue] at (0,1) {\scriptsize2};
\draw [line width=1pt, color=blue] (0,1)-- (0.25,0.5);
\draw [line width=1pt, color=blue, dashed] (0.25,0.5)-- (0.65,0.15);
\draw [line width=1pt, color=blue] (0.65,0.15)-- (1,0);
\node[anchor=north, color=blue] at (1,0) {\scriptsize{$j-1$}};
\draw [line width=1pt, color=blue] (1,0)-- (2,0);
\node[anchor=north, color=blue] at (2,0) {\scriptsize{$j$}};
\draw [line width=1pt, color=blue] (2,0)-- (2.5,1.25);
\node[anchor=west, color=blue] at (2.5,1.25) {\scriptsize{$j+1$}};
\draw [line width=1pt, color=blue, dashed] (2.5,1.25)-- (2,2);
\draw [line width=1pt, color=blue, dashed] (2,2)-- (1.25,2.25);
\node[anchor=south, color=blue] at (1.25,2.25) {\scriptsize{$n$}};
\draw [line width=1pt, dashed] (0,1)-- (1,0);
\draw [line width=1pt, dashed] (0,1.5)-- (2,0);
\end{tikzpicture}
    \caption{Location of vertices and edges after change of coordinates in case 2. The dashed blue lines indicate several vertices omitted in the figure. The dashed black lines cut out a subquadrilateral of $P$.}
    \label{fig:case2}
    \end{figure}
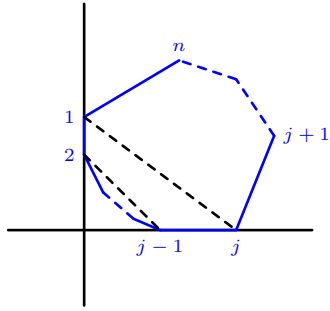    
    \item[\textbf{Case 2}:] $j > 4$. Then the edges $e_2$ and $e_j$ are non-adjacent, and two of the sides of $Q$ are diagonals of $P$.
    We begin by computing $l_\alpha$. As before, we may ignore terms that do not correspond the vertices $v_{1},v_2,v_{j-1}v_j$. We define $\sigma$ in an analogous way as before, which yields
    \begin{equation} \label{eq:case2_LinTermRed}
    \begin{split}
        l_\alpha &= -w_1^2 c_3 c_{j-1} c_{j+1} \sigma x_2 + w_3^2 c_1 c_{j-1} c_{j+1} \sigma x_2 \\
        &+ w_{j-1}^1 c_1 c_3 c_{j+1} \sigma x_1 - w_{j+1}^1 c_1 c_3 c_{j-1} \sigma x_1 \\
        &= \sigma c_{j-1}c_{j+1}(c_1w_3^2 - c_3w_1^2)x_2 + \sigma c_1c_3 (c_{j+1} w_{j-1}^1 - c_{j-1} w_{j+1}^1)x_1.
    \end{split}
    \end{equation}    
    Considering $Q$ next, we observe that there are two edges in $Q$ that were not already edges of $P$, namely one connecting the vertices $v_2, v_{j-1}$ (called~$l'$) and one connecting the vertices $v_1,v_j$ (called $l$). After scaling we write $l' = x_1+x_2 - 1$, and we write $l = \langle w, x \rangle + c$ for variables $w, c$. The polynomial $\alpha_{Q}$ is then given by
    \begin{equation} \label{eq:case2_tRed}
        \alpha_{Q} = (c + w^2) x_2 + (c + w^1) x_1.
    \end{equation}    
    As in the first case we have
    \begin{align*}
        \gamma_3 l' = - l_2 + l_3, &\qquad \gamma_{j-1} l' = l_{j-1} - l_j \\
        \gamma_1 l = l_1 - l_2, &\qquad \gamma_{j+1} l = -l_j + l_{j+1}
    \end{align*}
    where we always subsume scalar factors in the linear forms $l_i$. As a consequence we obtain
    \begin{equation*}
    \begin{split}
        w_1^2 = \gamma_1w^2, &\qquad c_1 = \gamma_1 c \\
        w_3^2 = \gamma_3, &\qquad c_3 = -\gamma_3 \\
        w_{j-1}^1 = \gamma_{j-1}, &\qquad c_{j-1} = -\gamma_{j-1} \\
        w_{j+1}^1 = \gamma_{j+1}w^1, &\qquad c_{j+1} = \gamma_{j+1} c
    \end{split}
    \end{equation*}
    Substituting these in \Cref{eq:case2_LinTermRed}, writing $\gamma = \gamma_1\gamma_3\gamma_{j-1}\gamma_{j+1}$, and comparing with \Cref{eq:case2_tRed} yields
    \begin{equation*}
        l_\alpha = - \sigma \gamma c (c + w^2)x_2 - \sigma \gamma c (c + w^1)x_1 = - \sigma \gamma c \alpha_{Q}.
    \end{equation*}
    Once again, this proves tangency.\qedhere
    \end{enumerate}
\end{proof}

\begin{prop}\label{thm:adjointcontact}
    Let $P$ be a convex polygon with $n$ vertices and adjoint curve $A = \cV(\alpha_P)$. Let $P'$ be the polygon obtained from $P$ by removing the vertex $v_n$. The intersection of the adjoint curve $A'$ of $P'$ with $A$ is equal to $\cR(P) \smallsetminus (L_1 \cup L_n)$. Moreover, each intersection point is of multiplicity two.
\end{prop}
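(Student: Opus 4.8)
The plan is to prove both assertions at once through a Bézout count in which \Cref{lem:AdjTangent} pins down every local intersection multiplicity. Throughout I assume that $\cH_P$ is simple (general position of the edge lines); the general case follows by the limiting argument already used in \Cref{sec:2}, since adjoints of polygons with a simple arrangement approximate arbitrary ones. The curves $A=\cV(\alpha_P)$ and $A'=\cV(\alpha_{P'})$ have degrees $n-3$ and $n-4$. The two edges of $P$ meeting at the deleted vertex $v_n$ are $e_1$ and $e_n$, carried by $L_1$ and $L_n$; hence, by simplicity, $\cR(P)\smallsetminus(L_1\cup L_n)$ is exactly the set of points $L_i\cap L_j$ with $i,j\in\{2,\dots,n-1\}$ and $|i-j|\geq 2$. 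Counting these gives $\binom{n-2}{2}-(n-3)=\frac{(n-3)(n-4)}{2}$ distinct points. Each lies on $A$ because it is a residual point of $P$, and each lies on $A'$ because $L_2,\dots,L_{n-1}$ remain facet lines of $P'$ and these pairs stay non-adjacent there, so it is also a residual point of $P'$. This yields the inclusion $\cR(P)\smallsetminus(L_1\cup L_n)\subseteq A\cap A'$.

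The decisive step is to bound the local intersection multiplicity at each such point $q=L_i\cap L_j$ from below by two. Since $q$ is a real residual point of both $P$ and $P'$, both $A$ and $A'$ are smooth at $q$ (by \cite{KohnEtAl2024AdjCurves}*{Theorem 3.8}, as in the proof of \Cref{lem:AdjTangent}), so their tangent lines at $q$ are defined. Here I use crucially that $i,j\in\{2,\dots,n-1\}$: the four vertices $v_{i-1},v_i,v_{j-1},v_j$ all survive the deletion of $v_n$, and $L_i,L_j$ bound the same edges in $P$ and in $P'$. Consequently the quadrilateral $Q=\conv(v_{i-1},v_i,v_{j-1},v_j)$ that \Cref{lem:AdjTangent} attaches to $q$ is the same whether computed in $P$ or in $P'$. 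Applying the lemma to each polygon, the tangent line to $A$ at $q$ and the tangent line to $A'$ at $q$ both equal $\cV(\alpha_Q)$. Two curves that are smooth at $q$ and share a tangent line there meet with multiplicity at least two, so $I_q(A,A')\geq 2$.

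Summing these lower bounds over the $\frac{(n-3)(n-4)}{2}$ points yields $\sum_q I_q(A,A')\geq(n-3)(n-4)=\deg A\cdot\deg A'$. Granting that $A$ and $A'$ share no common component, Bézout's theorem says the total intersection number is exactly $(n-3)(n-4)$, and the displayed inequality can be an equality only if $A\cap A'$ consists of precisely these points and each carries multiplicity exactly two. This simultaneously proves the set equality and the multiplicity-two statement.

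The input that needs the most care, and which I expect to be the main obstacle, is the finiteness hypothesis for Bézout: that $A$ and $A'$ have no common component. For a generic polygon the adjoint is irreducible (indeed smooth, by \cite{telen2025toric}*{Corollary 7.14}), and since $A$ and $A'$ then have distinct degrees they can share no component; equivalently, $A'$ is irreducible and is not contained in $A$, as it passes through a residual point of $P'$ on the new diagonal $v_1v_{n-1}$ that is not a residual point of $P$. A general polygon is then handled by specializing from this case, using that the adjoints and residual points vary algebraically with $P$; the only thing to rule out is that a degeneration introduces a shared factor, which can be checked directly from the explicit formula \eqref{eq:adjpol_expl} by restricting $\alpha_P$ and $\alpha_{P'}$ to the edge lines $L_i$ and comparing their zeros there.
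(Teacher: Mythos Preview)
Your argument is essentially the paper's own proof: count the residual points off $L_1\cup L_n$, show they lie on both adjoints, apply \Cref{lem:AdjTangent} twice to see that the tangent lines agree (using that the quadrilateral $Q$ is the same for $P$ and $P'$ when $i,j\notin\{1,n\}$), and conclude by B\'ezout. The paper is slightly terser but the structure and the key input are identical.

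The one place you go beyond the paper is your final paragraph on the no-common-component hypothesis for B\'ezout. The paper leaves this implicit; you are right to flag it. Your genericity argument (the generic adjoint is irreducible, and $\deg A\neq\deg A'$) settles the matter for the cases that actually get used later in the paper, where $A$ is assumed smooth. Your closing sentence about handling degenerations by ``restricting to edge lines and comparing zeros'' is too vague to stand as written; if you want the statement at full generality you should either spell this out or simply add the running hypothesis that $A$ is irreducible (or smooth), which is how the proposition is invoked downstream.
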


\begin{proof}
    Since all (non-adjacent) edges of $P$, except $e_1$ and $e_n$, are also (non-adjacent) edges of $P'$, we observe that $\cR(P)\smallsetminus ( L_1 \cup L_n) \subseteq \cR(P')$. Furthermore, we calculate
\begin{equation*}
    2 \cdot| \cR(P)\smallsetminus ( L_1 \cup L_n) |     = (n-3)(n-4) = \deg(A)\cdot\det(A').
\end{equation*}
By B\'ezout's theorem it thus suffices to prove that in every point $q \in \cR(P) \smallsetminus (L_1 \cup L_n)$ the curves $A$ and $A'$ have the same tangent line. For that let $\{q\} = L_i \cap L_j$. By assumption $i,j \not \in \{1, n\}$. Hence, \Cref{lem:AdjTangent} may be applied to both $P$ and $P'$, which proves the desired tangency.
\end{proof}
Now we are almost ready to prove \Cref{thm:recursiveDetRep}. We start with a proposition.
\begin{prop} \label{prop:quadrilrest}
 Let $P$ be a polygon with $n$ vertices, let $P_k$ be the convex hull of the vertices $v_1, \dots, v_k$ and let $Q = \overline{P \smallsetminus P_{n-2}}$ be the quadrilateral that is the convex hull of the vertices $v_1,v_{n-2},v_{n-1},v_n$. We write $A_k$ for the adjoint curve of $P_k$ and $A_Q$ for that of $Q$ (which is a line). Then
 \begin{equation*}
     A_n.A_Q = A_{n-2}.A_Q + 2L_{n-1}.A_Q.
 \end{equation*}
 If  $A_n$ is smooth, then  $A_{n-2} \cup A_Q$ is a contact curve to $A_n$ with contact divisor
 \begin{equation*}
     \frac{1}{2}(A_{n-2}\cup A_Q).A_n = (A_{n-2}\cup L_{n-1}).A_n - \frac{1}{2}A_{n-1}.A_n.
 \end{equation*}
\end{prop}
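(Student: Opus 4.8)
The plan is to derive both assertions from a single relation among forms of degree $n-3$, namely
\begin{equation*}
\alpha_Q\cdot\alpha_{P_{n-1}} = c_1\,\alpha_{P_{n-2}}\,l_{n-1}^2 + c_2\,\alpha_P \qquad (\star)
\end{equation*}
for suitable nonzero constants $c_1,c_2$. The degrees are consistent: $\deg\alpha_P=n-3$, $\deg\alpha_{P_{n-1}}=n-4$, $\deg\alpha_{P_{n-2}}=n-5$, and $\alpha_Q,l_{n-1}$ are linear, so every term has degree $n-3$. Granting $(\star)$, the first claim follows by restricting to the line $A_Q=\cV(\alpha_Q)$: the left-hand side vanishes there, so $c_2\,\alpha_P|_{A_Q}=-c_1\,(\alpha_{P_{n-2}}l_{n-1}^2)|_{A_Q}$, which is exactly the divisor equality $A_n.A_Q=A_{n-2}.A_Q+2L_{n-1}.A_Q$ on $A_Q$ (and forces $c_2\neq0$). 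The second claim follows by restricting $(\star)$ to $A_n$, where $\alpha_P$ vanishes: this gives $(\alpha_Q\alpha_{P_{n-1}})|_{A_n}=c_1(\alpha_{P_{n-2}}l_{n-1}^2)|_{A_n}$, i.e. $A_Q.A_n+A_{n-1}.A_n=A_{n-2}.A_n+2L_{n-1}.A_n$ as divisors on $A_n$. Dividing by two after rearranging yields $\frac12(A_{n-2}\cup A_Q).A_n=(A_{n-2}\cup L_{n-1}).A_n-\frac12 A_{n-1}.A_n$, and since $\frac12 A_{n-1}.A_n$ is an honest integral divisor by \Cref{thm:adjointcontact} (multiplicity two everywhere), the right-hand side is integral, so $A_{n-2}\cup A_Q$ is a contact curve with the stated contact divisor.

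Thus the whole proposition reduces to establishing $(\star)$. The first step is to pin down the geometry of the line $A_Q$. Applying \Cref{lem:AdjTangent} to $P$ with the quadrilateral $Q=\conv(v_1,v_{n-2},v_{n-1},v_n)$, whose two genuine edges of $P$ are $e_1$ and $e_{n-1}$ (so $i=1$, $j=n-1$), identifies $A_Q$ as the tangent line to $A_n$ at the residual point $p=L_1\cap L_{n-1}$. Since $p\in L_{n-1}$ and $A_Q\neq L_{n-1}$, these two lines meet only at $p$, so $L_{n-1}.A_Q=p$ and $2L_{n-1}.A_Q=2p$, matching the order-two vanishing of $\alpha_P$ along $A_Q$ at $p$. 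This already confirms the ``$2p$ part'' of the first claim and isolates the content: one must match the residual intersection $A_n.A_Q-2p$ with $A_{n-2}.A_Q$.

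To prove $(\star)$ itself I would use the explicit formula \Cref{eq:adjpol_expl} together with the linear relations among facet forms introduced in the proof of \Cref{lem:AdjTangent}, where one writes $\gamma\,l=l_a-l_b$ whenever three facet lines are concurrent. Expanding $\alpha_P,\alpha_{P_{n-1}},\alpha_{P_{n-2}},\alpha_Q$ as sums over vertices of products of the forms $l_j$, and grouping the summands according to which of the edges $e_1,e_{n-1},e_n$ (the ones removed in passing from $P$ to $P_{n-2}$) occur, one expects the terms not divisible by $l_{n-1}$ to reorganize into $c_2\alpha_P$ and the terms divisible by $l_{n-1}^2$ into $c_1\alpha_{P_{n-2}}l_{n-1}^2$, with cross terms cancelling; the constants $c_1,c_2$ should be products of the positive scalars $\gamma_\bullet$ relating the diagonals of $Q$ to the facet forms, exactly as in \Cref{lem:AdjTangent}. (For $n=5$ this can be verified directly: $\alpha_P$ is a conic lying in the pencil spanned by the conic $A_Q\cup A_{P_4}$ and the double line $2L_4$, because by \Cref{lem:AdjTangent} it contains the length-four base scheme of that pencil.)

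The main obstacle is precisely this bookkeeping: controlling how the four sum-over-vertices expansions interact and verifying that everything outside the two target groups cancels, so that $\alpha_P$ genuinely lies in the pencil spanned by $\alpha_Q\alpha_{P_{n-1}}$ and $\alpha_{P_{n-2}}l_{n-1}^2$. A coordinate-free alternative for this step is to prove the divisor equality $A_Q.A_n+A_{n-1}.A_n=A_{n-2}.A_n+2L_{n-1}.A_n$ directly on $A_n$, using that $A_{n-1}$ contributes $2\bigl(\cR(P)\smallsetminus(L_1\cup L_n)\bigr)$ by \Cref{thm:adjointcontact}, that the tangent line $A_Q$ contributes $2p$ plus a residual divisor, and that $L_{n-1}$ contributes the residual points lying on it. There the difficulty migrates to identifying the residual intersection of the tangent line $A_Q$ with $A_n$ and to checking point by point that the residual points are partitioned correctly between the two sides. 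Either route hinges on the same computation, which I expect to be the technical heart of the argument.
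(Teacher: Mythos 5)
Your reduction of both claims to the single identity $(\star)$ is structurally sound and the bookkeeping around it is correct: restricting $(\star)$ to $A_Q$ gives the first divisor equality, restricting to $A_n$ gives $A_Q.A_n+A_{n-1}.A_n=A_{n-2}.A_n+2L_{n-1}.A_n$, and integrality of $\frac12 A_{n-1}.A_n$ (from \Cref{thm:adjointcontact}) then certifies that $A_{n-2}\cup A_Q$ is a contact curve with the stated contact divisor. Your use of \Cref{lem:AdjTangent} with $i=1$, $j=n-1$ to identify $A_Q$ as the tangent line to $A_n$ at $L_1\cap L_{n-1}$ is also correct. The problem is that the entire proposition has been shifted onto $(\star)$, and $(\star)$ is never proven: you describe an expansion-and-cancellation strategy, say that "one expects" the terms to reorganize, and explicitly defer the cancellation as "the technical heart of the argument." That heart is missing, and it cannot be waved away. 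Note in particular that $(\star)$ is exactly the Max Noether identity $\lambda\alpha_Q\alpha_{n-1}-\mu l_{n-1}^2\alpha_{n-2}=\alpha_n$ which the paper establishes in the proof of \Cref{prop:almostdetrep} -- but there it is \emph{deduced from} \Cref{prop:quadrilrest} via Max Noether's fundamental theorem, so invoking it here without an independent proof would be circular. An independent proof by expansion is genuinely harder than what the proposition itself requires: for instance, at residual points of $P$ lying on $L_1$ or $L_n$ (which are not residual points of $P_{n-1}$ or $P_{n-2}$), neither term of the right-hand side of $(\star)$ vanishes individually, so one must verify a nontrivial matching of values with a single constant $c_1$ across all such points -- precisely the kind of computation you postpone.

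By contrast, the paper avoids the full two-variable identity altogether. For the first claim it normalizes coordinates so that $A_Q=\cV(x_2)$ and proves only the \emph{restricted} identity $-\gamma\alpha_n(x_1,0)=x_1^2\alpha_{n-2}(x_1,0)$, a univariate computation in which most summands of \Cref{eq:adjpol_expl} collapse. For the second claim it counts contact points (tangencies from \Cref{lem:AdjTangent} at the $\binom{n-3}{2}-(n-4)$ relevant subquadrilaterals, plus the tangency and the $n-5$ residual intersections along $A_Q$) and invokes B\'ezout, then assembles the contact divisor from \Cref{thm:adjointcontact}. If you completed the proof of $(\star)$, your argument would actually be stronger than the proposition (and would make the Max Noether step in \Cref{prop:almostdetrep} unnecessary), but as written the proposal is a plan whose decisive step is unverified, not a proof.
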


\begin{figure}[h]
    \centering
    \includegraphics[width=\linewidth]{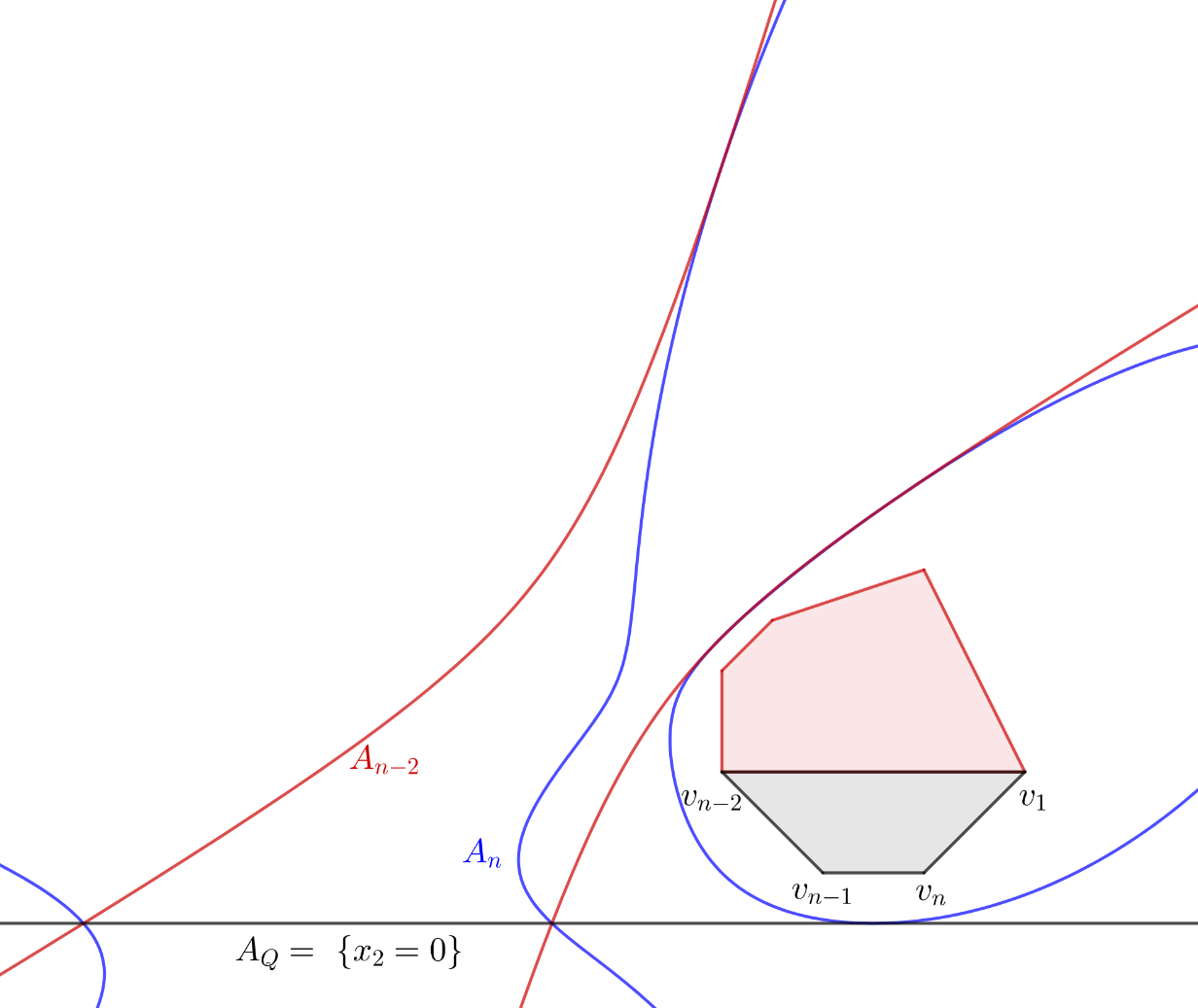}
    \caption{The blue line is the adjoint (quartic) curve of the heptagon. The red and black lines are the adjont curves of the correspondingly colored subpolygons.}
    \label{fig:WLOGsubquadril}
\end{figure}

\begin{proof}
 After choosing a suitable affine chart in $\pp^2$ and applying a linear coordinate change we may without loss of generality assume that
 \begin{equation*}
     v_{n-2} = (-\gamma,\gamma), v_{n-1} = (-1,1), v_n = (1,1), v_1 = (\gamma,\gamma)
 \end{equation*}
 for some $\gamma > 1$ (see \Cref{fig:WLOGsubquadril}). Then $A_Q = \cV(x_2)$ is the $x_1$-axis. Writing $\alpha_k$ for the adjoint polynomial of $A_k$ as defined in \Cref{eq:adjpol_expl}, it suffices to prove that
 \begin{equation*}
     -\gamma\alpha_n(x_1,0) = x_1^2\alpha_{n-2}(x_1,0).
 \end{equation*}
 Recall that $l_i(x_1,x_2) = \langle w_i, \binom{x_1}{x_2} \rangle + c_i \geq 0$ are the facet inequalities of $P$. For the sake of readability we write $w_i = (a_i,b_i)^t$. We thus have 
 \begin{equation*}
 \begin{split}
     \alpha_n(x_1,0) &= \sum_{i=2}^{n-3} \det(w_i,w_{i+1}) \left(\prod_{2\leq j\leq n-2, j\neq i,i+1} l_j(x_1,0)\right) x_1^2 \\
     &+ (a_{n-2}-b_{n-2})\left(\prod_{2\leq j \leq n-3} l_j(x_1,0)\right) x_1\\ & + (a_2+b_2)\left(\prod_{3\leq j \leq n-2} l_j(x_1,0)\right) x_1\\
     &- \left(\prod_{2\leq j \leq n-2} l_j(x_1,0)\right) x_1 + \left(\prod_{2\leq j \leq n-2} l_j(x_1,0)\right) x_1 \\
    &= x_1^2 \cdot(\dots)\\ & + x_1 \left(\prod_{3\leq j\leq n-3} l_j(x_1,0)\right)((a_{n-2}-b_{n-2})l_2(x_1,0) + (a_2+b_2)l_{n-2}(x_1,0)).
 \end{split}
 \end{equation*}
 Similarly, we have
 \begin{equation*}
 \begin{split}
     \alpha_{n-2}(x_1,0) &= -\gamma \sum_{i=2}^{n-3} \det(w_i,w_{i+1}) \prod_{2\leq j\leq n-2, j\neq i,i+1} l_j(x_1,0) \\
     & + a_{n-2} \prod_{2\leq j\leq n-3} l_j(x_1,0) - a_2 \prod_{3\leq j\leq n-2} l_j(x_1,0) \\
     &= -\gamma(\dots) + \left(\prod_{3\leq j\leq n-3} l_j(x_1,0)\right)(a_{n-2}l_2(x_1,0) - a_2l_{n-2}(x_1,0)).
 \end{split}
 \end{equation*}
 It now suffices to prove that
 \begin{equation*}
     -\gamma((a_{n-2}-b_{n-2})l_2(x_1,0) + (a_2+b_2)l_{n-2}(x_1,0)) = x_1(a_{n-2}l_2(x_1,0) - a_2l_{n-2}(x_1,0)).
 \end{equation*}
 Using $-\gamma a_{n-2} + \gamma b_{n-2} + c_{n-2} = 0$ and $\gamma a_2 + \gamma b_2 + c_2 = 0$ we may rewrite all occurrences of $c_{n-2}$ and $c_2$ (which are implicit in $l_{n-2}$ and $l_2$) in terms of $a_{n-2},a_2,b_{n-2},b_2$, which yields
 \begin{equation*}
 \begin{split}
     &(a_{n-2} - b_{n-2})l_2(x_1,0) + (a_2 + b_2)l_{n-2}(x_1,0)  \\ =&
     (a_{n-2} - b_{n-2})(a_2x_1-\gamma(a_2+b_2)) + (a_2 + b_2)(a_{n-2}x_1 + \gamma(a_{n-2}-b_{n-2}))  \\ =&
     (2a_2a_{n-2} - a_2b_{n-2} + a_{n-2}b_2)x_1
 \end{split}
 \end{equation*}
 and
 \begin{equation*}
 \begin{split}
     &a_{n-2}l_2(x_1,0) - a_2l_{n-2}(x_1,0)  \\ =
     &a_{n-2}(a_2x_1 - \gamma(a_2+b_2)) - a_2(a_{n-2}x_1 + \gamma(a_{n-2} - b_{n-2}))  \\ =&
     - \gamma(2a_2a_{n-2} - a_2b_{n-2} + a_{n-2}b_2).
 \end{split}
 \end{equation*}
 Thus the first claim follows.

 Now let $A_n$ be smooth. For proving that  $A_{n-2} \cup A_Q$ is a contact curve to $A_n$, by B\'{e}zout's theorem it suffices to count points of contact with $A_n$. From \Cref{lem:AdjTangent} we know that every subquadrilateral of $P_{n-2}$ that shares two non-adjacent edges with both $P_n$ and $P_{n-2}$ gives one such point of contact. Every such quadrilateral is uniquely determined by two such edges of $P_{n-2}$. This gives rise to exactly
 \begin{equation*}
     \binom{n-3}{2} - \underbrace{(n-4)}_\textrm{unordered pairs of adjacent edges}
 \end{equation*}
 many such quadrilaterals and thus points of contact.
 Furthermore, the line $A_Q$ is tangent to $A_n$ at one point, and intersects $A_n$ in $n-5$ further points that all lie on $A_{n-2}$ (as shown in the first part of the proof). In total this gives rise to
 \begin{equation*}
     \binom{n-3}{2} - (n-4) + (n-5) + 1 = \frac{(n-3)(n-4)}{2}
 \end{equation*}
 contact points and we conclude by B\'{e}zout's theorem that $A_{n-2} \cup A_Q$ is a contact curve to $A_n$. Its contact divisor can be determined from the same considerations. 
 Namely, we have seen above that
 \begin{align*}
     A_Q.A_n&=2T_{1,n-1}+(R_1+\cdots+R_{n-5}),\textrm{ and}\\
     A_{n-2}.A_n&=2\sum_{i=2}^{n-4}\sum_{j=i+2}^{n-2} T_{ij}+(R_1+\cdots+R_{n-5}),
 \end{align*}
 where $T_{ij}$ is the intersection point of $L_i$ and $L_j$ and $T_{1,n-1},R_1,\ldots,R_{n-5}$ are the intersection points of $A_Q$ and $A_n$. Furthermore, we have
 \begin{align*}
     L_{n-1}.A_n&=T_{1,n-1}+\sum_{i=2}^{n-3}T_{i,n-1},\textrm{ and}\\
     A_{n-1}.A_n&=2\sum_{i=2}^{n-3}\sum_{j=i+2}^{n-1}T_{ij},
 \end{align*}
 where the second equality follows from \Cref{thm:adjointcontact}. From this we obtain
 \begin{equation*}
     A_{n-1}.A_n+A_Q.A_n=A_{n-2}.A_n+2L_{n-1}.A_n
 \end{equation*}
 which is equivalent to the claim.
\end{proof}

The following proposition is almost \Cref{thm:recursiveDetRep}.

\begin{prop} \label{prop:almostdetrep}
 Let $P$ be a convex polygon with $n$ vertices $v_1, \dots, v_n$, $n\geq4$, whose adjoint curve is smooth. There exists a symmetric determinantal representation $M$ of $\alpha_P$ satisfying the following:
 \begin{enumerate}
     \item $M$ is \emph{tridiagonal}, i.e., if $M_{ij} \neq 0$, then $|i-j| \leq 1$.
     \item The $k$-th leading submatrix of $M$ is a determinantal representation of the adjoint of the convex hull of $v_1, \dots, v_{k+3}$.
 \end{enumerate}
\end{prop}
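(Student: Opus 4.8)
The plan is to realize the assertion as a three-term recurrence along the flag of subpolygons $P_4\subseteq P_5\subseteq\cdots\subseteq P_n$, where $P_m=\conv(v_1,\dots,v_m)$, and to read this recurrence off a symmetric tridiagonal matrix. Recall that a symmetric tridiagonal matrix of size $d=n-3$ with diagonal entries $a_1,\dots,a_d$ and off-diagonal entries $M_{k,k+1}=M_{k+1,k}=b_k$ has leading principal minors $D_k$ (the determinants of the top-left $k\times k$ blocks) obeying the continuant recursion $D_k=a_kD_{k-1}-b_{k-1}^2D_{k-2}$, with $D_0=1$ and $D_1=a_1$. Putting $\alpha_{P_3}:=1$ (a triangle has constant adjoint) and $a_1:=\alpha_{P_4}$ (a linear form, the adjoint of a quadrilateral), it is enough to produce for each $m\in\{5,\dots,n\}$ linear forms $a$ and $b$ with
\begin{equation}
 \alpha_{P_m}=a\cdot\alpha_{P_{m-1}}-b^2\cdot\alpha_{P_{m-2}}.\tag{$\star$}
\end{equation}
Indeed, taking these as the entries $a=a_{m-3}$, $b=b_{m-4}$ produces a symmetric tridiagonal matrix $M$ whose $k$-th leading submatrix has determinant $\alpha_{P_{k+3}}$, the adjoint of $\conv(v_1,\dots,v_{k+3})$; in particular $\det M=\alpha_{P_n}=\alpha_P$, which yields both (i) and (ii).

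The geometry singles out the two new entries. I will take $b$ to be a scalar multiple of the facet line $L_{m-1}$ of the edge $v_{m-2}v_{m-1}$, and $a$ a scalar multiple of the adjoint $\alpha_Q$ of the quadrilateral $Q=\conv(v_1,v_{m-2},v_{m-1},v_m)$ (a line, whose zero set is a tangent line of $A_m$ by \Cref{lem:AdjTangent}). These are exactly the curves occurring in \Cref{prop:quadrilrest}: applying that result to $P_m$, its proof provides the divisor identity on $A_m=\cV(\alpha_{P_m})$
\begin{equation*}
 A_{m-1}.A_m+A_Q.A_m=A_{m-2}.A_m+2\,L_{m-1}.A_m,
\end{equation*}
itself deduced from \Cref{thm:adjointcontact} and the tangency in \Cref{lem:AdjTangent}. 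This rearranges to the equality of divisors $\operatorname{div}_{A_m}(\alpha_Q\,\alpha_{P_{m-1}})=\operatorname{div}_{A_m}(L_{m-1}^2\,\alpha_{P_{m-2}})$.

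The heart of the argument is to upgrade this divisor identity to the polynomial identity $(\star)$. Since $A_m$ is smooth it is integral, so two global sections of $\cO_{A_m}(m-3)$ with the same divisor are proportional; hence for some $\lambda\in\C^\ast$ the form $\alpha_Q\,\alpha_{P_{m-1}}-\lambda\,L_{m-1}^2\,\alpha_{P_{m-2}}$ vanishes on $A_m$. Because $\alpha_{P_m}$ is irreducible of degree $m-3$, equal to the degree of this form, the form equals $\mu\,\alpha_{P_m}$ for a constant $\mu$; moreover $\mu\neq0$, since otherwise the irreducible $\alpha_{P_{m-1}}$ would divide $L_{m-1}^2\alpha_{P_{m-2}}$, which is impossible by degree (save for a low-dimensional coincidence removable by perturbation). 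Dividing by $\mu$ gives $(\star)$ with $a=\alpha_Q/\mu$ and $b^2=(\lambda/\mu)\,L_{m-1}^2$. As all forms involved are real we have $\lambda,\mu\in\R$, and the interlacing of $\alpha_{P_{m-1}}$ with $\alpha_{P_m}$ coming from \Cref{thm:adjointcontact} should force $\lambda/\mu>0$, so that $b=\sqrt{\lambda/\mu}\,L_{m-1}$ is a real form (this positivity is what will later make $M$ definite through \Cref{thm:interlmakesdef}).

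The main obstacle I anticipate is not a single hard computation but two pieces of bookkeeping. First, the proportionality step needs each intermediate curve $A_m$ with $5\le m\le n$ to be integral, i.e.\ every adjoint along the flag must be smooth (equivalently irreducible), not merely $A_n$. I would handle this by a genericity argument—smoothness of the entire flag is a dense open condition on the vertices—and then recover the general, and definite, statement of \Cref{thm:recursiveDetRep} by a limiting argument, the determinantal entries depending algebraically on the vertex data. Second, the scalars $\lambda$ and $\mu$ must be chosen consistently so that the single scaling freedom of each $\alpha_{P_m}$ makes all the identities $(\star)$ hold simultaneously; this is routine, since at level $m$ the forms $\alpha_{P_{m-1}}$ and $\alpha_{P_{m-2}}$ are already fixed and only $\alpha_{P_m}$ together with the new entries $a,b$ remain to be normalized.
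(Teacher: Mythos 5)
Your proposal is correct and is essentially the paper's own proof: the same tridiagonal bordering with (a scalar multiple of) $\alpha_Q$ as the new diagonal entry and $l_{n-1}$ as the off-diagonal entry, with the three-term identity $\alpha_{P_m}=a\,\alpha_{P_{m-1}}-b^{2}\alpha_{P_{m-2}}$ extracted from the divisor identity of \Cref{prop:quadrilrest}; the only cosmetic differences are that the paper invokes Max Noether's fundamental theorem where you argue via proportionality of sections of $\cO_{A_m}(m-3)$ with equal divisors, and that the paper sidesteps your $\lambda/\mu>0$ discussion by placing the scalar $\tfrac{\lambda\gamma_{n-2}}{\mu\gamma_{n-1}}$ on the diagonal entry rather than taking a square root in the off-diagonal one. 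The intermediate-smoothness caveat you flag (the induction needs every $A_m$ smooth, not just $A_n$) is equally present in the paper's induction, and your perturbation-plus-limit remedy is precisely the degeneration argument the paper itself uses to pass from \Cref{prop:almostdetrep} to \Cref{thm:recursiveDetRep}.
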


\begin{proof}
 We adopt the notation of \Cref{prop:quadrilrest} and its proof for $P_k, A_k, \alpha_k, Q, A_Q$ and define $\alpha_Q$ in analogy. 
 We will prove the claim by induction on $n$. For $n=4$ the claim is immediate. Thus assume $n\geq5$ and let $M_{n-1}$ be a determinantal representation of $\alpha_{n-1}$ satisfying $(i)$ and $(ii)$. In particular, its leading submatrix $M_{n-2}$ of size $n-5$ is a determinantal representation of $\alpha_{n-2}$, so that $\det(M_{n-1}) = \gamma_{n-1}\alpha_{n-1}$ and $\det(M_{n-2}) = \gamma_{n-2}\alpha_{n-2}$ for some non-zero $\gamma_{n-1},\gamma_{n-2}\in\R$. 
 By \Cref{prop:quadrilrest} we know that the polynomials $\alpha_{n-1}\alpha_{n-2}\alpha_Q$ and $\alpha_{n-2}^2l_{n-1}^2$ cut out the same points with the same multiplicities on $A_n$, and hence so do $\alpha_{n-2}l_{n-1}^2$ and $\alpha_{n-1}\alpha_Q$. By Max Noether's fundamental theorem \cite{Fulton2008Curves}*{\S5.5} there exists an identity
 \begin{equation*}
     \lambda\alpha_Q\alpha_{n-1} - \mu l_{n-1}^2\alpha_{n-2} = \alpha_n
 \end{equation*}
 for some non-zero scalars $\lambda,\mu\in\R$.  Then we write
 \begin{equation*}
     M = \begin{pmatrix}
         M_{n-1} & (0, \dots, l_{n-1})^t \\
         (0, \dots, 0, l_{n-1}) & \frac{\lambda\gamma_{n-2}}{\mu\gamma_{n-1}}\alpha_Q
     \end{pmatrix}
 \end{equation*}
 and observe that
 \begin{equation*}
 \begin{split}
     \frac{\mu}{\gamma_{n-2}}\det(M) & = \frac{\lambda}{\gamma_{n-1}}\alpha_Q\det(M_{n-1}) - \frac{\mu}{\gamma_{n-2}}l_{n-1}^2\det(M_{n-2})  \\
     &= \lambda\alpha_Q\alpha_{n-1} - \mu l_{n-1}^2\alpha_{n-2} = \alpha_n.\qedhere
 \end{split}
 \end{equation*}
\end{proof}

It remains to prove that the determinantal representation from \Cref{prop:almostdetrep} is definite and extend it to the possibly singular case.
In order to apply \Cref{thm:interlmakesdef} to certify definiteness, we need to show that $\alpha_P$ is hyperbolic and that $\alpha_{P'}$ is an interlacer. Hyperbolicity of $\alpha_P$ is proven in \cite{KohnEtAl2024AdjCurves}*{Theorem 3.8}. Our proof that $\alpha_{P'}$ is an interlacer of $\alpha_P$ is also based on this theorem. 

\begin{lem}\label{cor:Interlacer}
    The polynomials $\alpha_P$ and $\alpha_{P'}$ as in \Cref{thm:adjointcontact} are strictly hyperbolic with respect to every point in the interior of the polygon $P$. Moreover, the polynomial $\alpha_{P'}$ interlaces $\alpha_P$ with respect to all such points.
\end{lem}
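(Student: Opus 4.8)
The plan is to derive interlacing from the contact information in \Cref{thm:adjointcontact}, read through the nested-oval picture of \Cref{prop:hyperbRealLocus}. First the degrees fit the interlacer setup: $\deg\alpha_P=n-3$ and $\deg\alpha_{P'}=n-4$, so $\deg\alpha_P=\deg\alpha_{P'}+1\geq2$ once $n\geq5$ (the case $n=4$ being a line interlacing a conic). Hyperbolicity of $\alpha_P$ with respect to every point of $\Int(P)$, together with smoothness of the adjoint along its real locus, is exactly \cite{KohnEtAl2024AdjCurves}*{Theorem 3.8}; applying the same theorem to $P'$ gives hyperbolicity of $\alpha_{P'}$ with respect to every point of $\Int(P')$. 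Hence both adjoints are \emph{strictly} hyperbolic, and by \Cref{prop:hyperbRealLocus} their real loci $A(\R)$ and $A'(\R)$ are families of nested ovals (plus one pseudoline for whichever of $n-3$, $n-4$ is odd), with $\Int(P)$ and $\Int(P')$ lying in the respective innermost regions. One subtlety I defer: the statement asks for strict hyperbolicity of $\alpha_{P'}$ on all of $\Int(P)$, whereas the theorem only gives it on the smaller set $\Int(P')$. Since the hyperbolicity region of $\alpha_{P'}$ is open, convex, and contains the convex set $\Int(P')$, this gap is equivalent to the single inclusion $\Int(P)\subseteq\Int(O_1')$, i.e.\ to the disjointness $A'(\R)\cap\Int(P)=\varnothing$; I will obtain it as a byproduct of the interlacing argument.

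Next I would exploit \Cref{thm:adjointcontact}: $A$ and $A'$ meet exactly in the points of $\cR(P)\smallsetminus(L_1\cup L_n)$, all of which are real, and each intersection has multiplicity two. Thus the two curves are tangent wherever they meet, so $A'$ never transversally crosses $A$. The topological consequence I want is a confinement statement: every connected component of $A'(\R)$ lies in the closure of a single connected component of $\pp^2(\R)\smallsetminus A(\R)$, and, the contact relation being symmetric, every component of $A(\R)$ lies in the closure of a single component of $\pp^2(\R)\smallsetminus A'(\R)$. Moreover, by \Cref{prop:hyperbRealLocus} every oval of a strictly hyperbolic curve contains the hyperbolicity point $e\in\Int(P')$ in its interior, so all ovals of $A(\R)$ and $A'(\R)$ encircle the common point $e$. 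Non-crossing plus common encircling forces all of these ovals to be totally ordered by nesting.

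The remaining work is to show that $A$-ovals and $A'$-ovals \emph{strictly alternate} in this nesting order, which is precisely the assertion that $\alpha_{P'}$ interlaces $\alpha_P$. The cardinalities are forced to permit alternation: $A(\R)$ has $\lfloor\tfrac{n-3}{2}\rfloor$ ovals, $A'(\R)$ has $\lfloor\tfrac{n-4}{2}\rfloor$ ovals, exactly one of the two curves carries a pseudoline (necessarily the outermost component of its family), and the region count behind \Cref{prop:hyperbRealLocus} shows that the number of components of $A'(\R)$ equals the number of components of $\pp^2(\R)\smallsetminus A(\R)$ other than $\Int(O_1)$. Given the confinement and these counts, alternation is equivalent to a single anchoring fact: that the innermost oval in the total order belongs to $A$, equivalently $\Int(O_1)\subseteq\Int(O_1')$, equivalently $A'(\R)\cap\Int(P)=\varnothing$. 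Once the anchor is in place, the two confinement statements and the matching counts rule out two consecutive ovals of the same type, yielding exactly one component of $A'(\R)$ in the closure of each non-innermost region of $\pp^2(\R)\smallsetminus A(\R)$, and none inside $\Int(O_1)\supseteq\Int(P)$; this simultaneously proves interlacing and discharges the deferred hyperbolicity of $\alpha_{P'}$ on $\Int(P)$.

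The main obstacle is exactly this anchor, i.e.\ showing that $A'(\R)$ avoids $\Int(P)$, equivalently that the innermost $A'$-oval encloses the innermost $A$-oval rather than sitting inside it. The promising route is a sign analysis of $\alpha_{P'}$: using \Cref{eq:adjpol_expl} one checks that on each edge line of $P'$ the zeros of $\alpha_{P'}$ are the residual points of $P'$, which lie strictly outside the corresponding edge segment, so $\alpha_{P'}$ is sign-definite on $\overline{P'}$ and $\overline{P'}\subseteq\Int(O_1')$; the task is to extend this nonvanishing across the thin triangle $\overline{\triangle(v_{n-1},v_n,v_1)}$ so as to reach $v_n$, after which convexity of $\Int(O_1')$ gives $\Int(P)\subseteq\Int(O_1')$. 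A second, more robust approach to the anchor is a degeneration in which $v_n$ is driven onto the edge $[v_{n-1},v_1]$: there $\alpha_P$ factors (up to scalar) as a linear form times $\alpha_{P'}$, so the innermost $A$-oval is born inside $O_1'$, fixing the orientation of the alternation for $P$ near the degeneration; one then propagates the anchor over the connected family obtained by moving $v_n$ (with $A'$ fixed), invoking \cite{KohnEtAl2024AdjCurves}*{Theorem 3.8} to keep the adjoint strictly hyperbolic and its real locus under control. A minor point used repeatedly is the equivalence between strict hyperbolicity with respect to $e$ and $e$ lying in the interior of the innermost oval, which both licenses the appeal to \Cref{prop:hyperbRealLocus} and converts the anchor into the desired hyperbolicity on all of $\Int(P)$.
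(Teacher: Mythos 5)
Your skeleton matches the paper's proof up to a point: strict hyperbolicity of both $\alpha_P$ and $\alpha_{P'}$ from \cite{KohnEtAl2024AdjCurves}*{Theorem 3.8}, confinement of each component of $A'(\R)$ to the closure of a single component $K_j$ of the complement of $A(\R)$ via \Cref{thm:adjointcontact}, and a final reduction to an ``innermost oval'' statement. But there is a genuine gap exactly at what you yourself call ``the main obstacle'': the anchor is never proved. Both routes you offer are left as tasks (``the task is to extend this nonvanishing across the thin triangle\dots'', ``one then propagates the anchor over the connected family\dots''), so the argument is incomplete at its crux, and with it the promised byproduct (strict hyperbolicity of $\alpha_{P'}$ on all of $\Int(P)$) is also never discharged.

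Moreover, your route makes the anchor harder than it needs to be, because you use less of \cite{KohnEtAl2024AdjCurves}*{Theorem 3.8} than the paper does. The paper extracts from that theorem not only hyperbolicity but also a placement statement: each oval $O_i'$ of $A'(\R)$ intersects both $O_i$ and $O_{i+1}$, which together with confinement pins $O_i'\subseteq\overline{K_i}$ and in particular rules out $O_1'$ lying inside the closed disk bounded by $O_1$. Once placement is known, the anchor is a two-line argument from facts you already have: $\Int(P')\subseteq\Int(O_1')$ because $\alpha_{P'}$ is hyperbolic with respect to $\Int(P')$, while if $O_1'$ were contractible in $\overline{K_1}$ then $\Int(O_1')\subseteq\overline{K_1}$ would be disjoint from $K_0\supseteq\Int(P)\supseteq\Int(P')$, a contradiction; nestedness of $A'(\R)$ then settles $i\geq2$. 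Without placement, hyperbolicity with respect to $\Int(P')$ is perfectly consistent with the bad configuration $O_1'\subseteq\overline{K_0}$, $\Int(P')\subseteq\Int(O_1')\subsetneq K_0$. This is why your two sketches have to work so hard (note that even completing sketch (a), i.e.\ proving $\Int(P)\subseteq\Int(O_1')$, would not exclude this configuration); it is why one direction of your claimed equivalence --- that $A'(\R)\cap\Int(P)=\varnothing$ implies $\Int(O_1)\subseteq\Int(O_1')$ --- is unjustified; and it is why your counting step is insufficient: confinement plus the counts plus the anchor do not by themselves exclude two components of $A'(\R)$ landing in the closure of one region while another region receives none. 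To repair the proof, import the full content of \cite{KohnEtAl2024AdjCurves}*{Theorem 3.8} (which residual points lie on which ovals); placement, the anchor, and the alternation then all follow as in the paper.
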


\begin{proof}   
    By \cite{KohnEtAl2024AdjCurves}*{Theorem 3.8} the defining polynomials $\alpha_P$ and $\alpha_{P'}$ of the curves $A$ and $A'$ are strictly hyperbolic in the sense of \Cref{prop:hyperbRealLocus}.

    Let $(O_i)_i$ be the ovals of $A(\R)$ (labeled as in \Cref{prop:hyperbRealLocus}). Furthermore, let $K_0, \dots, K_N$ be the connected components of $\R^2\backslash A(\R)$ where $\overline{K_i} \cap O_j \neq \emptyset$ if and only if $i \leq j \leq i+1$. Next let $(O_i')_i$ be the ovals of $A'(\R)$. Since $A$ and $A'$ are contact curves, for all $i$ there exists $j$ such that $O_i' \subseteq \overline{K_j}$. Following \cite{KohnEtAl2024AdjCurves}*{Theorem 3.8} $O_i'$ intersects both $O_{i}$ and $O_{i+1}$, such that $O_i' \subseteq \overline{K_i}$. To conclude the proof we have to show that $O_i'$ is not contractible in $\overline{K_i}$, i.e., that its interior contains $O_i$. By hyperbolicity of $A'$ it suffices to do so for $i = 1$. If $O_1'$ were contractible in $K_1$, then the polygon $P$ would be contained in exterior of $O_1'$, contradicting that $A'$ is hyperbolic with respect to the interior of a subpolygon of $P$.    
\end{proof}

\begin{proof}[Proof of \Cref{thm:recursiveDetRep}]
    If the adjoint of $P$ is smooth, then the claim follows directly from \Cref{prop:almostdetrep}, \Cref{thm:interlmakesdef} and \Cref{cor:Interlacer}. Now let $P$ be a polygon with singular adjoint curve $A = \cV(\alpha_P)$. Since the adjoint continuously depends on its residual arrangement (\cite{KohnRanestad2019AdjCurves}*{Corollary 1}) and the generic adjoint is smooth (\cite{telen2025toric}*{Corollary 7.14}), we may thus choose a sequence of polygons $(P_k)_{k=0}^\infty$ converging to $P$ with corresponding adjoint polynomials $\alpha_k$ such that $\cV(\alpha_k)$ is smooth for all $k$ and $\alpha_P = \lim_{k\to\infty} \alpha_k$. Furthermore, we may assume that there exists a point $e$ in the interior of all $P_k$ and $P$. Then each $\alpha_k$ is hyperbolic with respect to $e$. Now it follows from \cite{PlaumannVinzantHyp2013LDR}*{Lemma 3.4} that a suitable limit of determininantal representations of the $\alpha_k$ is a determinantal representation with the desired properties as these properties are all stable under degeneration.
\end{proof}

\begin{rem} \label{rem:detRepExpl}
 The entries of the determinantal representation from \Cref{thm:recursiveDetRep} can be described very explicitly. To do so let $M = (M_{ij})_{i,j}$. Then it follows from the proof of \Cref{prop:almostdetrep} that we have up to a (possibly zero) scalar that
 \begin{equation*}
     M_{ij} = \left\lbrace \begin{array}{ll}
         0 & |i-j| \geq 2, \\
         l_{\min\{i,j\}+3} & |i-j| = 1,\\
         \alpha_{Q_i} & i = j,
     \end{array}\right.
 \end{equation*}
 where $Q_i$ is the subquadrilateral of $P$ with vertices $v_1, v_{i+1}, v_{i+2}, v_{i+3}$ and $\alpha_{Q_i}$ is its adjoint polynomial. Indeed, the proof of \Cref{prop:almostdetrep} shows this  in the smooth case and these properties are stable under degeneration.
\end{rem}

We conclude this section with a computational example.

\begin{ex} \label{ex:planar}
    Consider the heptagon $P$, whose vertex set is
    \begin{equation*}
        V(P) = \{(0,-2),(3,0),(3,2),(0,3),(-2,2),(-4,0),(-2,-2)\}.
    \end{equation*}
    For a visualization of $P$ and its adjoint we refer to \Cref{fig:WLOGsubquadril}. Its defining facet equations are the following seven linear forms
    \begin{small}
    \begin{align*}
        l_1(x_1,x_2) &= -x_1+x_2,& l_2(x_1, x_2) &= -2x_1-x_2+9, & l_3(x_1, x_2) &= -x_1+3x_2+20,\\
        l_4(x_1, x_2) &= x_1-x_2+8,& l_5(x_1, x_2) &= x_1+3,&l_6(x_1, x_2) &= x_1+x_2,\\
        l_7(x_1,x_2) &= -x_2+1,&&&&
    \end{align*}
    \end{small}its adjoint polynomial is
    \begin{tiny}
    \begin{equation*}
    \alpha_P = 2x_1^4+2x_1^3x_2-7x_1^2x_2^2+6x_1x_2^3-3x_2^4+44x_1^3-26x_1^2x_2+4x_1x_2^2+74x_2^3+199x_1^2-824x_1x_2-47x_2^2-2880x_2
    \end{equation*}
    \end{tiny}and it has a smooth zero set.
    Following the proof of \Cref{prop:almostdetrep} one obtains the following determinantal representation of $\alpha_P$:
    \begin{equation*}
        \begin{pmatrix}
            27x_1-39x_2+316 & x_1 - x_2 + 8 & 0 & 0 \\
            x_1 - x_2 + 8 & \frac{3}{49}x_1 - \frac{5}{147}x_2+\frac{20}{49} & x_1+3 & 0 \\
            0 & x_1+3 & \frac{147}{4}x_1 + \frac{147}{8}x_2 + \frac{441}{4} & x_1 + x_2 \\
            0 & 0 & x_1 + x_2 & \frac{32}{147}x_2
        \end{pmatrix}.
    \end{equation*}
The structure of this determinantal representation as described in \Cref{thm:recursiveDetRep} and \Cref{rem:detRepExpl} is clearly visible.
\end{ex}

\section{Three Dimensions} \label{sec:4}\label{sec:detreps3d}
Next we study determinantal representations of adjoints of three-dimensional polytopes. Our main auxiliary notion here is that of a {nice} line arrangement.

\begin{Def}\label{def:nicearrangements}
    Let $D\geq1$. An arrangement $C$ of lines in $\pp^3$ is \emph{nice for degree $D+1$} if $C$ is the union of a line arrangement $Y\subseteq\pp^3$ which is nice for degree $D$ and an arrangement $Z\subseteq\pp^3$ of $D$ disjoint lines such that
    \begin{enumerate}
        \item no line of $Y$ is contained in $Z$,
        \item every line of $Z$ intersects $D-1$ lines from $Y$,
        \item there exists a plane $H\subseteq\pp^3$ such that the closure of $X\smallsetminus H$ is nice for degree $D$,
        \item no three lines of $C$ intersect in a point.
    \end{enumerate}
    We further declare the empty arrangement to be the only one which is nice for degree one.
\end{Def}

\begin{rem}\label{niceremarks}
    It follows from the definition that an arrangement $C$ that is nice for degree $D+1$ consists of exactly $\binom{D+1}{2}$ lines. This also implies that the hyperplane $H$ from part (iii) in \Cref{def:nicearrangements} contains $D$ lines of $C$.
\end{rem}

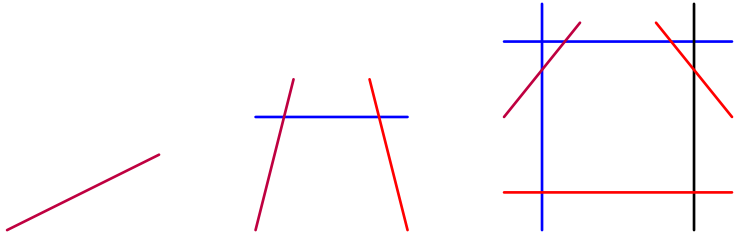
\begin{figure}
\begin{tikzpicture}[line cap=round,line join=round,x=1cm,y=1cm]

\draw [line width=1pt,color=purple] (0,0)-- (2,1);

\end{tikzpicture}
\hspace{1cm}
\begin{tikzpicture}[line cap=round,line join=round,x=1cm,y=1cm]

\draw [line width=1pt,color=blue] (0,1.5)-- (2,1.5);

\draw [line width=1pt,color=purple] (0,0)-- (0.5,2);

\draw [line width=1pt,color=red] (1.5,2)-- (2,0);

\end{tikzpicture}
\hspace{1cm}
\begin{tikzpicture}[line cap=round,line join=round,x=1cm,y=1cm]

\draw [line width=1pt,color=blue] (-0.5,2)-- (2.5,2);

\draw [line width=1pt,color=blue] (0,2.5)-- (0,-0.5);

\draw [line width=1pt] (2,2.5)-- (2,-0.5);

\draw [line width=1pt,color=red] (-0.5,0)-- (2.5,0);

\draw [line width=1pt,color=purple] (0.5,2.25)-- (-0.5,1);

\draw [line width=1pt,color=red] (1.5,2.25)-- (2.5,1);

\end{tikzpicture}
    \caption{Line arrangements that are nice for degree $D+1$ for $D=1,2,3$ (from left to right).}
    \label{fig:arrangements}
\end{figure}  

\begin{ex}
    \Cref{fig:arrangements} shows line arrangements that are nice for degree $D+1$ for $D=1,2,3$. In each case, there are $D$ disjoint lines that are colored red or purple that intersect the remaining lines in $D-1$ points. Removing them gives an arrangement that is nice for degree $D$. Furthermore, there are $D$ lines that are colored blue or purple. These are contained in a hyperplane and removing them gives an arrangement that is nice for degree $D$.
\end{ex}

We will prove that the ideal sheaf of a nice arrangement satisfies the conditions from \Cref{thm:idealsheafdetrep}. We note that in general it might be hard to compute the cohomology groups of the ideal sheaf of line arrangements just from their combinatorics: it is already highly non-trivial for the case of $m$ pairwise disjoint lines \cite{HartshorneHirschowitz}. However, the definition of a nice arrangement was chosen in a way that makes the computation of the relevant groups feasible.

\begin{lem}\label{lem:smallestform}
    If $C\subseteq\pp^3$ is a line arrangement that is nice for degree $D$, then $h^0(\pp^3,\cI_C(D-2))=0$ and $h^0(\pp^3,\cI_C(D-1))\neq0$. In other words, the smallest degree of a non-zero form that vanishes on $C$ is equal to $D-1$.
\end{lem}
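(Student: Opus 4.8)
The plan is to prove both assertions simultaneously by induction on $D$, exploiting the recursive structure built into \Cref{def:nicearrangements}. The base case is $D=1$, where $C=\emptyset$: here $\cI_C=\cO_{\pp^3}$, so $h^0(\pp^3,\cI_C(D-2))=h^0(\pp^3,\cO_{\pp^3}(-1))=0$ and $h^0(\pp^3,\cI_C(D-1))=h^0(\pp^3,\cO_{\pp^3})=1\neq0$, exactly as claimed. For the inductive step I would fix $C$ nice for degree $D$ and invoke the plane $H$ from part (iii) of \Cref{def:nicearrangements}, writing $C' := \overline{C\smallsetminus H}$, which is nice for degree $D-1$; by \Cref{niceremarks} the plane $H$ contains exactly $D-1$ of the lines of $C$, while the remaining lines are precisely those of $C'$. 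Let $l_H$ denote a linear form cutting out $H$.

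For the existence statement $h^0(\cI_C(D-1))\neq0$, the inductive hypothesis supplies a nonzero form $g$ of degree $D-2$ vanishing on $C'$, and I would simply check that $l_H\cdot g$ vanishes on all of $C$: each line of $C$ either lies in $H$, where $l_H$ vanishes, or is a line of $C'$, where $g$ vanishes. Since $l_H g\neq0$ has degree $D-1$, this is the desired form.

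The substantive part is the non-existence statement $h^0(\cI_C(D-2))=0$. Suppose for contradiction that $f\neq0$ has degree $D-2$ and vanishes on $C$. Restricting to $H\cong\pp^2$, the form $f|_H$ has degree $D-2$ and vanishes on the $D-1$ distinct lines of $C$ contained in $H$; their defining linear forms are pairwise coprime in the coordinate ring of $H$, so their product, of degree $D-1$, must divide $f|_H$. As $\deg f|_H=D-2<D-1$, this forces $f|_H=0$, i.e. $l_H\mid f$, so $f=l_H\cdot f'$ with $\deg f'=D-3$. For each line $\ell$ of $C'$ one has $\ell\not\subseteq H$, so $l_H|_\ell$ is a nonzero element of the domain $\C[\ell]\cong\C[\pp^1]$; from $0=f|_\ell=(l_H|_\ell)(f'|_\ell)$ I conclude $f'|_\ell=0$. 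Thus $f'$ vanishes on $C'$, which is nice for degree $D-1$, yet $\deg f'=D-3<(D-1)-1$, so the inductive hypothesis forces $f'=0$ and hence $f=0$, a contradiction.

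The only genuinely delicate step is this last one, and specifically the passage from $f|_H=0$ back to vanishing of the cofactor $f'$ on $C'$: it relies on the fact that every line of $C'$ meets $H$ in a single point, so that $l_H$ does not vanish identically along it and can be cancelled in the domain $\C[\pp^1]$. I expect this to be the main obstacle to present cleanly, whereas the line counts (from \Cref{niceremarks}) and the two applications of unique factorization of forms on $\pp^2$ and $\pp^1$ are routine bookkeeping.
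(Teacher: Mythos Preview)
Your proof is correct and follows essentially the same approach as the paper's: both argue by induction on $D$, use the hyperplane $H$ from part (iii) of \Cref{def:nicearrangements} to peel off $D-1$ lines, establish existence by multiplying an inductively obtained form by $l_H$, and establish non-existence by observing that a putative form of degree $D-2$ would have to be divisible by $l_H$, reducing to the inductive hypothesis for $C'$. You supply more detail than the paper at the two points it leaves implicit (why $f|_H=0$, and why the cofactor $f'$ still vanishes on each line of $C'$), but the overall argument is the same.
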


\begin{proof}
    We prove the claim by induction on $D$. The claim is trivial for $D=1$. Assume that the claim is true for all arrangements that are nice for degree $D$ and let $C$ be nice for degree $D+1$. If $H$ is a hyperplane as in condition (iii) of \Cref{def:nicearrangements}, defined by a linear polynomial $L$, then it contains $D$ lines by \Cref{niceremarks}. Therefore, every form $G$ of degree $D-1$ that vanishes on $C$ must vanish on $H$. Thus $G=L\cdot G'$ for some form $G'$ of degree $D-2$ that vanishes on the closure of $C\smallsetminus H$. Since this is an arrangement that is nice for degree $D$, the induction hypothesis implies that $G'=0$ and thus $G=0$. This shows that no non-zero form of degree $D-1$ vanishes on an arrangement that is nice for degree $D+1$. To prove the other claim, choose a non-zero form $F$ of degree $D-1$ that vanishes on $C\smallsetminus H$. This exists by induction hypothesis. Then $L\cdot F$ is non-zero, vanishes on $C$ and has the correct degree $D$.
\end{proof}

\begin{thm}\label{thm:nicearrangementsdetrep}
    The ideal sheaf $\cI_C$ of  a line arrangement $C\subseteq\pp^3$ that is nice for degree $D$ satisfies the conditions from \Cref{thm:idealsheafdetrep}:
    \begin{align*}
        h^0(\pp^3,\cI_C(D-2))&=h^1(\pp^3,\cI_C(D-2))\\
        &=h^1(\pp^3,\cI_C(D-3))=0,\\
        h^2(\pp^3,\cI_C(D-3))&=h^3(\pp^3,\cI_C(D-3)).
    \end{align*}
\end{thm}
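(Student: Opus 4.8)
The plan is to prove the statement by induction on $D$, the engine being the standard residuation exact sequence attached to the plane $H$ supplied by condition (iii) of \Cref{def:nicearrangements}. For the base case $D=1$ the arrangement $C$ is empty, so $\cI_C=\cO_{\pp^3}$, and the four assertions reduce to the vanishing of $h^0(\cO_{\pp^3}(-1))$, $h^1(\cO_{\pp^3}(-1))$, $h^1(\cO_{\pp^3}(-2))$ together with $h^2(\cO_{\pp^3}(-2))=h^3(\cO_{\pp^3}(-2))=0$, all of which are standard (the last via Serre duality).

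For the inductive step let $C$ be nice for degree $D+1$, let $H=\cV(L)$ be the plane from condition (iii), let $C'=\overline{C\smallsetminus H}$ (which is nice for degree $D$), and let $C_H$ denote the $D$ lines of $C$ contained in $H$ (\Cref{niceremarks}). Since no line of $C'$ lies in $H$, the residual ideal $(\cI_C:L)$ equals $\cI_{C'}$, so the residuation sequence for the Cartier divisor $H$ reads
\begin{equation*}
0\to\cI_{C'}(-1)\xrightarrow{\cdot L}\cI_C\to\cI_{C\cap H,H}\to0,
\end{equation*}
where $C\cap H$ is the scheme-theoretic intersection inside $H\cong\pp^2$. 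The crucial geometric input — which I expect to be the main obstacle — is to show that $C\cap H=C_H$ as subschemes. Granting this, $C_H$ is a reduced plane curve of degree $D$ cut out by a product of $D$ linear forms, whence $\cI_{C\cap H,H}\cong\cO_{\pp^2}(-D)$.

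To establish $C\cap H=C_H$ I would argue by a counting argument. In $\pp^3$ each of the $\binom{D}{2}$ lines of $C'$ meets $H$ in exactly one point, and by condition (iv) it meets at most one line of $C_H$ (two would force three lines of $C$ through a point of $H$). Thus $C\cap H=C_H$ is equivalent to every line of $C'$ actually meeting a line of $C_H$, i.e. to the number $I$ of intersecting pairs $(\ell,m)\in C'\times C_H$ being $\binom{D}{2}$. To compute $I$ I would first note that the number of intersecting pairs of lines of a nice arrangement is an invariant $P_D$: from the $Y\cup Z$ decomposition one gets $P_{D+1}=P_D+D(D-1)$, since each of the $D$ disjoint lines of $Z$ meets $D-1$ lines of $Y$ and $Z$ contributes no internal pair, and this recursion (with $P_1=0$) makes $P_D$ well defined by the same induction. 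Counting the intersecting pairs of $C$ instead through $C=C'\sqcup C_H$ gives $P_{D+1}=P_D+I+\binom{D}{2}$, because the $D$ lines of $C_H$ pairwise meet in $\pp^2$ (no two are parallel, none concurrent by (iv)), contributing $\binom{D}{2}$ pairs. Comparing yields $I=D(D-1)-\binom{D}{2}=\binom{D}{2}$, as required; a short local computation at each transversal crossing of a line of $C'$ with a line of $C_H$ shows that no embedded point is created, so the scheme-theoretic equality $C\cap H=C_H$ holds.

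Once the sequence is in place the cohomological conclusion is formal. All cohomology of $\cO_{\pp^2}(-1)$ and of $\cO_{\pp^2}(-2)$ vanishes, so twisting the residuation sequence by $\cO(D-1)$ and by $\cO(D-2)$ and passing to the long exact sequence yields isomorphisms $H^i(\pp^3,\cI_C(D-1))\cong H^i(\pp^3,\cI_{C'}(D-2))$ and $H^i(\pp^3,\cI_C(D-2))\cong H^i(\pp^3,\cI_{C'}(D-3))$ for every $i$. The four identities required for $C$ (whose relevant twists are $(D+1)-2=D-1$ and $(D+1)-3=D-2$) therefore transfer verbatim from the inductive hypothesis applied to $C'$, and the vanishing $h^0(\pp^3,\cI_C(D-1))=0$ also follows directly from \Cref{lem:smallestform}. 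This closes the induction. In summary, the only genuine work is the combinatorial identification $C\cap H=C_H$ via the invariance of $P_D$; the homological algebra is then routine because the hyperplane term is cohomologically trivial in the two twists that matter.
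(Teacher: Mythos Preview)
Your proof is correct, but it takes a genuinely different route from the paper's. The paper's induction uses the decomposition $C=Y\cup Z$ from parts (i)--(ii) of \Cref{def:nicearrangements}: restricting $\cI_Y$ to the $D$ disjoint lines of $Z$ yields the short exact sequence
\[
0\to\cI_C\to\cI_Y\to\cO_{\pp^1}(-(D-1))^D\to0,
\]
and the induction is closed by invoking not just the inductive hypothesis for $\cI_Y$ but the \emph{additional} conclusions of \Cref{thm:idealsheafdetrep} (the vanishing of $h^1(\cI_Y(k+D-1))$ for all $k$, the equality $h^2=h^3$ in the right range, and crucially $h^0(\cI_Y(D-1))=D$), which in turn requires knowing that $Y$ lies on some surface of degree $D$. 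Your approach instead exploits condition (iii): the residuation sequence for $H$ has quotient $\cO_{\pp^2}(-D)$, whose cohomology vanishes entirely in the two twists that matter, so the induction closes using only the bare inductive hypothesis on $C'$. This is cleaner on the cohomological side and avoids bootstrapping through \Cref{thm:idealsheafdetrep}. The price you pay is the combinatorial identity $C\cap H=C_H$, which the paper never needs; your double-counting argument via the invariant $P_D$ is correct (and rather elegant), though you should actually carry out the advertised local check at a node of $C$ on $H$---in local coordinates with $H=\{z=0\}$, $m=\{y=z=0\}$, $\ell=\{x=y=0\}$, the ideal $(y,xz)+(z)=(y,z)$ restricts to $(y)$ on $H$, so no embedded point appears.
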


\begin{proof}
    We prove the claim by induction on $D$. For $D=1$ we have $\cI_C=\cO_{\pp^3}$ and the claim is true. Assume that the claim is true for all arrangements that are nice for degree $D$ and let $C$ be nice for degree $D+1$. By \Cref{def:nicearrangements} we can write $C=Y\cup Z$ where $Y$ is a line arrangement that is nice for degree $D$ and $Z$ consists of $D$ disjoint lines, each of which intersects $D-1$ lines from $Y$. Let $\cI_Y$ be the ideal sheaf of $Y$ on $\pp^3$. The kernel of the map $\cI_Y\to\cO_Z$ is $\cI_C$ because $C=Y\cup Z$. Because $Z$ is the disjoint union of $D$ lines, we have $\cO_Z\simeq\cO_{\pp^1}^D$. Since the intersection of each of these lines with $Y$ is transversal and consists of $D-1$ points, the image of $\cI_Y\to\cO_Z$ can be identified with $\cO_{\pp^1}(-(D-1))^D$.    
    This gives the short exact sequence
    \begin{equation}\label{eq:sequencenicearrangement}
        0\to\cI_C\to\cI_Y\to\cO_{\pp^1}(-(D-1))^D\to0.
    \end{equation}
    Next, we note that by \Cref{lem:smallestform} there is a hypersurface of degree $D$ that contains~$Y$. Thus, by  the induction hypothesis the ideal sheaf $\cI_Y$ satisfies the conditions from \Cref{thm:idealsheafdetrep} and we get
    \begin{align}
        h^1(\pp^3,\cI_Y(k+D-1))&=0&\textrm{ for }k\in\Z,\label{eq:coho1} \\
        h^2(\pp^3,\cI_Y(k+D-1))&=h^3(\pp^3,\cI_Y(k+D-1))&\textrm{ for }k\geq-2,\label{eq:coho2}\\
        h^0(\pp^3,\cI_Y(D-1))&=D.\label{eq:cohoD}
    \end{align}
    \Cref{eq:coho1} implies $h^1(\cI_Y(D-1))=0$. Thus the first part of the long exact sequence in cohomology obtained from twisting \Cref{eq:sequencenicearrangement} by $D-1$ is
    \begin{align*}
        0&\to H^0(\pp^3,\cI_C(D-1))\to H^0(\pp^3,\cI_Y(D-1))\to H^0(\pp^3,\cO_{\pp^1}^D)\\&\to H^1(\pp^3,\cI_C(D-1))\to0.
    \end{align*}
    We have $h^0(\cO_{\pp^1}^D)=D$ and by \Cref{eq:cohoD} we also have $h^0(\cI_Y(D-1))=D$. This shows that $h^0(\cI_C(D-1))=h^1(\cI_C(D-1))$. \Cref{lem:smallestform} shows that both are zero. Next, we look at the long exact sequence in cohomology obtained from twisting \Cref{eq:sequencenicearrangement} by $D-2$ and obtain:
    \begin{equation*}
        0\to H^i(\pp^3,\cI_C(D-2))\to H^i(\pp^3,\cI_Y(D-2))\to 0
    \end{equation*}
    for $i=1,2,3$ where we used that $h^j(\cO_{\pp^1}(-1)))=0$ for all $j$. Now the remaining claims follow from \Cref{eq:coho1} and \Cref{eq:coho2} for $k=-1$.
\end{proof}

\begin{cor}\label{cor:smoothdetrep}
    Let $P$ be a three-dimensional polytope with at most eight facets such that $\cH_P$ is simple. Then its adjoint has a determinantal representation. In particular, any smooth adjoint of a three-dimensional polytope with simple facet hyperplane arrangement has one. 
\end{cor}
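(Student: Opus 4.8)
The plan is to reduce the statement to the existence of a suitable line arrangement on the adjoint surface $A_P$ and then invoke \Cref{thm:nicearrangementsdetrep} together with \Cref{thm:idealsheafdetrep}. Write $d=k-4$ for the degree of the adjoint polynomial $\alpha_P$ of a three-dimensional polytope with $k$ facets. For $k\le5$ we have $d\le1$, so $\alpha_P$ is constant or linear and a $1\times1$ matrix (resp.\ the empty matrix) gives a determinantal representation trivially. For $6\le k\le8$ we have $2\le d\le4$, and by \Cref{cor:ResLines} the residual arrangement $\cR(P)$ consists of $\binom{k-3}{2}$ lines, all of which lie on $A_P$ because $A_P$ vanishes on $\cR(P)$.

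The heart of the argument is to exhibit, inside $\cR(P)$, a sub-arrangement that is \emph{nice for degree $d$} in the sense of \Cref{def:nicearrangements}; by \Cref{niceremarks} such an arrangement has $\binom{d}{2}=\binom{k-4}{2}$ lines, which is exactly $k-4$ fewer than the number available in $\cR(P)$. Once such a nice sub-arrangement $C\subseteq A_P$ is found, \Cref{thm:nicearrangementsdetrep} shows that its ideal sheaf $\cI_C$ satisfies the cohomological hypotheses of \Cref{thm:idealsheafdetrep}, and the latter produces a determinantal representation of the defining polynomial $\alpha_P$ of $A_P$.

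To produce the nice sub-arrangement I would exploit the recursive structure of \Cref{def:nicearrangements} together with the very simple incidence pattern of $\cR(P)$. Since $\cH_P$ is simple, two residual lines $H_i\cap H_j$ and $H_k\cap H_l$ meet precisely when $\{i,j\}$ and $\{k,l\}$ share an index, namely at the unique triple point of the three hyperplanes involved, and are disjoint otherwise (four distinct hyperplanes have empty intersection); thus the entire incidence geometry is dictated by the facet-adjacency graph of $P$. Moreover, three residual lines are concurrent exactly when they equal $H_a\cap H_b$, $H_a\cap H_c$, $H_b\cap H_c$ for a mutually non-adjacent triple of facets $F_a,F_b,F_c$, which is precisely the configuration of \Cref{lem:threelines}. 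Starting from a single residual line, which is nice for degree $2$, one adds at each step $D$ pairwise disjoint residual lines, each sharing an index with exactly $D-1$ of the already chosen lines, while keeping $D$ of the chosen lines coplanar for condition (iii) and avoiding every such ``bad'' triple for condition (iv). Since the simple $3$-polytopes with at most eight facets fall into finitely many combinatorial types, explicitly listed in \cite{BrittonDunitz73Polytopes}, this reduces to a finite check, one type at a time; for $k=6$ a single residual line already does the job, recovering the quadric case of \Cref{prop:quadratic}. For the ``in particular'' clause, note that if $A_P$ is smooth then by \Cref{lem:threelines} there are no bad triples at all, so condition (iv) holds automatically for every sub-arrangement, making these cases the easiest.

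The main obstacle is the construction of the nice arrangement for the combinatorial types whose adjoint is singular: there $\cR(P)$ genuinely contains concurrent triples, so one cannot take an arbitrary subset and must select $\binom{d}{2}$ residual lines realizing all four conditions of \Cref{def:nicearrangements} simultaneously, in particular the disjointness of the lines of $Z$, the exact intersection count in (ii), and the coplanarity in (iii), while dropping at least one line from each bad triple. I expect this to require honest case analysis across the finitely many types, and it is natural to certify the resulting incidences, and hence the cohomological conditions feeding \Cref{thm:idealsheafdetrep}, by computer as in \Cref{rem:computedetrep}.
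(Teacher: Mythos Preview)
Your approach to the first assertion --- exhibit, for each of the finitely many combinatorial types of simple $3$-polytopes with $k\le 8$ facets listed in \cite{BrittonDunitz73Polytopes}, a sub-arrangement of $\cR(P)$ that is nice for degree $d=k-4$, and then invoke \Cref{thm:nicearrangementsdetrep} together with \Cref{thm:idealsheafdetrep} --- is exactly the paper's proof. Your combinatorial description of when residual lines meet and when they are concurrent is correct and makes the finite check transparent.

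There is, however, a gap in your handling of the ``in particular'' clause. That clause concerns \emph{any} three-dimensional polytope with simple $\cH_P$ and smooth adjoint, with no a~priori bound on the number of facets. Your argument only observes that smoothness of $A_P$ forces, via \Cref{lem:threelines}, the absence of concurrent triples in $\cR(P)$, and hence that condition~(iv) of \Cref{def:nicearrangements} comes for free; this does not by itself establish the claim for $k\ge 9$, where you have not set up any case analysis. The missing ingredient is \Cref{prop:adj_sing} (or equivalently \Cref{thm:smooth3d}): for $k\ge 9$ the residual arrangement always contains three concurrent lines, so the adjoint is singular. Contrapositively, a smooth adjoint forces $k\le 8$, reducing the second assertion to the first. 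This is precisely how the paper deduces it.
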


\begin{proof}
    By inspection of \cite{BrittonDunitz73Polytopes}*{Fig. 4 and Fig. 5}, see also \Cref{tab:combinatorialTypes}, we find in the residual arrangement of each combinatorial type a line arrangement as in \Cref{fig:arrangements} that is nice for the respective degree. Thus the claim follows from \Cref{thm:nicearrangementsdetrep}.
    The additional claim on smooth adjoints then follows from \Cref{thm:smooth3d}.
\end{proof}

\begin{rem}
    Let $P$ be a three-dimensional polytope with $D+4$ facets. If the residual arrangement $\cR(P)$ of $P$ contains an arrangement that is nice for degree $D$, then it contains in particular $D-1$ disjoint lines. Since every such line is the intersection of two facet hyperplanes and because lines that lie in the same hyperplane intersect, we have
    \begin{equation*}
        2(D-1)\leq D+4,
    \end{equation*}
    which implies that $D\leq6$. Therefore, a polytope whose residual arrangement contains a line arrangement that is nice for the degree of its adjoint can have at most ten facets. This does not rule out the existence of a possibly different line arrangement $C$ in $\cR(P)$ that satisfies the conditions from \Cref{thm:idealsheafdetrep}, see \Cref{ex:non-nicearrangement}. Moreover, the adjoint hypersurface often contains more lines than just the ones from the residual arrangement. Namely, it contains the adjoint curve of every facet. For quadrilateral facets this is a line.
\end{rem}

Note that \Cref{thm:nicearrangementsdetrep} is a general statement about line arrangements in $\mathbb{P}^3$ and that any surface containing a nice line arrangement of correct degree has a determinantal representation. In particular, one can also use \Cref{thm:nicearrangementsdetrep} to look for determinantal representations of adjoints of arbitrary three-dimensional polytopes, whose facet hyperplane arrangement is not necessarily simple. To demonstrate this, in \Cref{ex:exmplnonsimple} we present a non-simple polytope whose residual arrangement contains a nice line arrangement. The reason we restrict ourselves to the case of simple polytopes in \Cref{cor:smoothdetrep} is that there are too many combinatorial types of non-simple polytopes to perform an exhaustive search: \cite{BrittonDunitz73Polytopes}*{Fig. 5} contains $257$ combinatorial types of polytopes with eight facets, only $14$ of which lead to a simple facet hyperplane arrangement. For the same reason we did not go through all combinatorial types of simple polytopes with nine or ten facets.

\begin{ex}\label{ex:exmplnonsimple}
Consider a polytope $P$ with combinatorial type as in \Cref{fig:exmplnonsimple}. Then $P$ is a non-simple polytope with seven facets. Its adjoint surface has degree three.
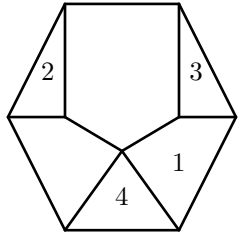
\begin{figure}[h]
    \centering
    \begin{tikzpicture}[line cap=round,line join=round,x=1.5cm,y=1.5cm]
\draw [line width=1pt] (1,0)-- (0.5,1);
\draw [line width=1pt] (0.5,1)-- (-0.5,1);
\draw [line width=1pt] (-0.5,1)-- (-1,0);
\draw [line width=1pt] (-1,0)-- (-0.5,-1);
\draw [line width=1pt] (-0.5,-1)-- (0.5,-1);
\draw [line width=1pt] (0.5,-1)-- (1,0);

\draw [line width=1pt] (0.5,1)-- (0.5,0);
\draw [line width=1pt] (0.5,0)-- (0,-0.3);
\draw [line width=1pt] (-0.5,1)-- (-0.5,0);
\draw [line width=1pt] (-0.5,0)-- (0,-0.3);
\draw [line width=1pt] (0.5,0)-- (1,0);
\draw [line width=1pt] (-0.5,0)-- (-1,0);
\draw [line width=1pt] (0,-0.3)-- (-0.5,-1);
\draw [line width=1pt] (0,-0.3)-- (0.5,-1);

\node at (0.5,-0.4) {1};
\node at (-0.65,0.4) {2};
\node at (0.65,0.4) {3};
\node at (0,-0.7) {4};
\end{tikzpicture}\caption{Nonsimple polytope with a nice line arrangement.}
    \label{fig:exmplnonsimple}
\end{figure}
Its residual arrangement contains five lines, among them the three lines $R_{12}, R_{23}$ and $R_{34}$ (where $R_{ij}$ denotes the line obtained by intersecting the hyperplanes corresponding to the facets with labels $i$ and $j$). These three lines form a nice arrangement for degree three, which in turn yields a determinantal representation of the adjoint of $P$. Note that since the facets with labels 2, 3 and 4 are pairwise non-adjacent, the adjoint of $P$ is singular by \Cref{prop:adj_sing}.
\end{ex}

\begin{ex}\label{ex:non-nicearrangement}
    By direct computation one can show that the ideal sheaf of three lines in $\pp^3$ that intersect in one common point satisfies the conditions of \Cref{thm:idealsheafdetrep} for $D=3$ if and only if they do not lie in a common plane. This shows in particular that there are line arrangements that are not nice in the sense of \Cref{def:nicearrangements} but still give rise to a determinantal representation. Such arrangements can be contained in an adjoint of degree $D=3$: take the lines $R_{23},R_{24},R_{34}$ from the previous example \Cref{ex:exmplnonsimple}.
\end{ex}

Let $P \subseteq \pp^3$ be a polytope whose adjoint hypersurface $X$ contains a nice arrangement as defined in \Cref{def:nicearrangements}. 
Computing a determinantal representation for~$X$ explicitly is not difficult using the computer algebra system \texttt{Macaulay2} \cite{M2}.
The following code returns such a determinantal representation.

\begin{verbatim}
S = QQ[x0,x1,x2,x3];

-----
-- !assume the following as given!
-- adjoint polynomial a defining the adjoint surface V(a)
-- nice arrangement given by its vanishing ideal D
-----

-- computation of a determinantal representation
Ma = D*(S^1/(ideal(a)*S^1));
rso = (res Ma).dd;
DetRep = rso_1;

print(ideal(det(DetRep)) == ideal(a));
-- returns true
\end{verbatim}

\begin{ex}
 Consider the polytope $P \subseteq \R^3$ determined by the intersection of the halfspaces $f_i(1,x_1,x_2,x_3)\geq 0$ where the $f_i$ are defined below:
 \begin{align*}
     f_0 &= x_0 + x_1 - 3x_2 + 2x_3, &
     f_1 &= x_0 + x_1 + 3x_2 + \frac{1}{5}x_3, \\
     f_2 &= x_0 + 3x_1 - x_2, &
     f_3 &= x_0 - x_1 - 3x_2 + \frac{3}{2}x_3, \\
     f_4 &= x_0 - 3x_1 - x_2 - \frac{2}{3}x_3, &
     f_5 &= x_0 - x_1 + 3x_2 - \frac{1}{2}x_3, \\
     f_6 &= x_0 + 3x_1 + x_2 - x_3, &
     f_7 &= x_0 - 3x_1 + x_2 - \frac{3}{2}x_3.
 \end{align*}
 The combinatorial type of $P$ corresponds to the ninth entry of \Cref{tab:combinatorialTypes}. Writing $R_{ij} = \cV(f_i,f_j)\subseteq\pp^3$, in this case a nice arrangement as in \Cref{fig:arrangements}, that is contained in the adjoint of $P$, is given by the union of the following residual lines:
 \begin{equation*}
    R_{04}, R_{05}, R_{07}, R_{17}, R_{27}, R_{14}. 
 \end{equation*}
 The resulting determinantal representation of the adjoint $\alpha_P$ of $P$ is
 \begin{tiny}
 \begin{equation*}
     \begin{pmatrix}
         x_0+\frac{3}{4}x_2+\frac{59}{160}x_3  &  \frac{3}{4}x_2-\frac{91}{160}x_3  &  -\frac{9}{4}x_2+\frac{3}{160}x_3  &  -\frac{3003}{1129}x_1-\frac{38685}{4516}x_2+\frac{473893}{180640}x_3 \\[3pt]
         -x_1+\frac{11}{6}x_2-\frac{283}{240}x_3  &  x_0-\frac{3}{2}x_2+\frac{137}{240}x_3  &  \frac{3}{2}x_2-\frac{137}{80}x_3  &  \frac{3836}{1129}x_1+\frac{13337}{2258}x_2-\frac{1452191}{270960}x_3 \\[3pt]
         -\frac{19}{12}x_2+\frac{37}{96}x_3  &  -x_1+\frac{3}{4}x_2-\frac{77}{96}x_3  &  x_0+\frac{3}{4}x_2-\frac{1}{32}x_3  &  -\frac{5902}{1129}x_1-\frac{10645}{4516}x_2+\frac{404243}{108384}x_3 \\[3pt]
         \frac{1}{3}x_2-\frac{1}{15}x_3  &  \frac{7}{120}x_3  &  -x_1-\frac{3}{10}x_3  &  x_0-\frac{668}{1129}x_1-\frac{500}{1129}x_2-\frac{112141}{135480}x_3
     \end{pmatrix}.
 \end{equation*}
 \end{tiny}
\end{ex}

\section{Universal Adjoints and the ABHY Associahedron} \label{sec:6}
In this section we describe another hypersurface associated to $P$, called a \emph{universal adjoint}. The term was coined in \cite{telen2025toric} in a more general setting of polyhedral fans. We now give a definition following this reference. Let $\Sigma_P$ denote the normal fan of $P$ in an appropriate affine chart of $\mathbb{P}^n$, and let 
\begin{equation*}
P = \{y\in \mathbb{R}^n\mid \langle u_\rho , y\rangle + z_\rho \geq 0\ \text{ for } \rho\in\Sigma_P(1)\}    
\end{equation*}
 be a minimal facet description of $P$ in the same chart. Here $\Sigma_P(1)$ denotes the set of rays of $\Sigma_P$ and $\langle \cdot,\cdot\rangle$ is the Euclidean inner product. We record the vectors $u_\rho$ in a $d \times n$ matrix $U$. The \emph{universal adjoint polynomial} $\mathrm{Adj}_P$ is then a polynomial in the variables $x_\rho$ for $\rho\in \Sigma_P(1)$ defined as follows:
$$\mathrm{Adj}_P(x) = \sum_{\sigma \in \Sigma_P(n)} |\det U_\sigma|\cdot\prod_{\rho\not\in\sigma(1)} x_\rho.$$
Here $\Sigma_P(n)$ is the set of $n$-dimensional cones of $\Sigma_P$ and $U_\sigma$ is the submatrix of $U$ consisting of the columns indexed by the rays of $\sigma$.
When substituting $\langle u_\rho , y\rangle + z_\rho$ for $x_\rho$ in $\mathrm{Adj}_P(x)$, we get the restriction of the adjoint $\alpha_P$ to the considered affine chart, see e.g. \cite{PavlovTelen2024PosGeom}*{Section 2}. The adjective ``universal'' underscores the fact that $\mathrm{Adj}_P$ only depends on the normal fan of $P$, and that the adjoints of all polytopes with the same normal fan as $P$ can be obtained from $\mathrm{Adj}_P$ as described above.

\begin{rem}
Note that while the adjoint $\alpha_P$ of an $n$-dimensional polytope $P$ with $k$ facets has degree $k-n-1$, the degree of the universal adjoint is $k-n$ \cite{PavlovTelen2024PosGeom}*{Proposition 2.5}. For this reason, substituting $\langle u_\rho , y\rangle + z_\rho$ for $x_\rho$ in a determinantal representation $M(x)$ of $\mathrm{Adj}_P$ gives a matrix with linear entries whose determinant is $\alpha_P$. However, as its size is larger than the degree of $\alpha_P$, it is not a determinantal representation in the sense of \Cref{def:detrep}, cf. \Cref{rem:detrep}.
\end{rem}

While we do not pursue the question of whether universal adjoints admit a determinantal representation in general, we study it in detail for the ABHY associahedron \cite{arkani2018scattering}. This example is of direct relevance to physics, and served as the initial motivation for this article. 

The ABHY associahedron is just a particular metric realization of the associahedron; for details, see \cite{arkani2018scattering}*{Section 3}. The vertices of the $(n-3)$-dimensional associahedron $\mathcal{A}_{n-3}$ correspond to triangulations of a regular planar $n$-gon, and the facets of $\mathcal{A}_{n-3}$ are labeled by individual diagonals of the $n$-gon. In the universal adjoint, each facet corresponds to a variable. We will therefore denote these variables $X_{ij}$, where $(i,j)$ is the diagonal connecting vertices $i$ and $j$ in the $n$-gon (we enumerate the vertices  counterclockwise), following the physics convention. 

Let $\mathcal{T}_{n}$ be the set of all triangulations of a regular $n$-gon. Then the polynomial we consider is given by 
$$\mathrm{Adj}_{n-3}=\sum_{T\in\mathcal{T}_{n}}\prod_{(i,j)\not\in T} X_{ij}.$$
This is the numerator of the rational function giving the tree-level planar bi-adjoint scalar $n$-point amplitude in $\phi^3$ quantum field theory. In \cite{arkani2024hidden}*{page 46} the authors speculate whether the polynomial $\mathrm{Adj}_{n-3}$ is ``expressible as the determinant of a predictable matrix, in
a way that would make all our hidden zeros manifest''.
We answer this question to the negative for the following natural type of determinantal representations for multi-affine polynomials.
\begin{Def}[AV-representations]\label{def:avrepresentation}
   Let $f\in\C[x_1,\ldots,x_n]$ be a polynomial that is multi-affine in the variables $x_1,\ldots,x_d$, i.e., each appears to degree at most one, such that the coefficient of the monomial $x_1\cdots x_d$ is one. We say that $f$ has an \emph{AV-representation} (with respect to $x_1,\ldots,x_d$) if there is a $d\times d$ matrix $A$ whose entries are polynomials in the variables $x_{d+1},\ldots,x_n$ such that 
   \begin{equation*}
       f=\det(\textrm{diag}(x_1,\ldots,x_d)+A).
   \end{equation*}
   We call the variables $x_1,\ldots,x_d$ the \emph{primary variables} and the $x_{d+1},\ldots,x_n$ the \emph{secondary variables} of the AV-representation.
\end{Def}

\begin{rem}\label{rem:comparison}
    AV-represenations were studied extensively by Al Ahmadieh and Vinzant in \cite{al2024determinantal} (and hence the name). We note that the existence of an AV-representation is neither implied by nor it implies the existence of a determinantal representation in the sense of \Cref{def:detrep}: While \Cref{def:avrepresentation} is more restrictive than \Cref{def:detrep} regarding the primary variables by requiring them to appear only on the diagonal, it is more flexible on the secondary variables by allowing for higher degrees. The AV-representations that we construct in this section, however, are special in that they only have linear entries, therefore being determinantal representations in the sense of \Cref{def:detrep}. Note that an AV-representation  makes it manifest that $f$ vanishes on all linear subspaces defined by
    \begin{equation*}
        x_i=x_{d+1}=\cdots=x_n=0,
    \end{equation*}
    where $i\in[d]$.
\end{rem}

\begin{rem}\label{rem:derivativeAVrep}
    Let $f$ be as in \Cref{def:avrepresentation} and $M$ be an AV-representation of $f$ with primary variables $x_1,\ldots,x_d$. 
    It is clear from the definition that the upper left $(d-1)\times(d-1)$ block of $M$ is an AV-representation of the partial derivative $\frac{\partial f}{\partial x_d}$ with primary variables $x_1,\ldots,x_{d-1}$.
\end{rem}
Now we turn back to the ABHY associahedron. As in the previous sections, it turns out that the situation is nice in two and three dimensions but not for the higher dimensional case.
\begin{ex}\label{ex:3dassoc}
    The universal adjoint of the $3$-dimensional ABHY associahedron  is the following multi-affine homogeneous polynomial of degree six:
    \begin{align*} A_3 =& X_{13}X_{14}X_{15}X_{24}X_{25}X_{35}+X_{14}X_{15}X_{24}X_{25}X_{26}X_{35}+X_{13}X_{14}X_{15}X_{25}X_{35}X_{36}+\\&X_{13}X_{15}X_{25}X_{26}X_{35}X_{36}+X_{15}X_{24}X_{25}X_{26}X_{35}X_{36}+X_{13}X_{14}X_{15}X_{24}X_{25}X_{46}+\\&X_{14}X_{15}X_{24}X_{25}X_{26}X_{46}+X_{13}X_{14}X_{15}X_{24}X_{36}X_{46}+X_{13}X_{14}X_{24}X_{26}X_{36}X_{46}+\\&X_{14}X_{24}X_{25}X_{26}X_{36}X_{46}+X_{13}X_{14}X_{15}X_{35}X_{36}X_{46}+X_{13}X_{14}X_{26}X_{35}X_{36}X_{46}+\\&X_{13}X_{25}X_{26}X_{35}X_{36}X_{46}+X_{24}X_{25}X_{26}X_{35}X_{36}X_{46}.
    \end{align*}
    This polynomial has $14$ terms, one for each vertex of the associahedron. Each term is the product of all variables labeled by the edges that do not appear in a given triangulation of the hexagon. The following matrix $M$ provides a remarkably simple AV-representation of this polynomial:
    $$M =  \begin{pmatrix}
        X_{13}& X_{25}& X_{24}& X_{15}& 0& X_{15}\\ -X_{24}& X_{14} + X_{25}& X_{24}& X_{15}& 0& X_{15}\\ -X_{25}& X_{25}& X_{24} + X_{35}& 0& X_{15}& X_{15}\\ 0& X_{25}& X_{24}& X_{15} + X_{26}& 0& X_{15}\\ X_{25}& 0& 0& X_{25}& X_{36}& 0\\ 0& -X_{25}& 0& -X_{25}& X_{24}& X_{46}
    \end{pmatrix}.$$
    The secondary variables $X_{15},X_{24},X_{25}$ correspond to the edges of a triangulation called  ``the snake'', see \Cref{fig:hexagontriangulations}. Finally, we note that the top-left $3\times3$ submatrix of $M$ is an AV-representation for the universal adjoint of the $2$-dimensional ABHY associahedron, cf. \Cref{rem:derivativeAVrep}:
    \begin{equation*}
        \mathrm{Adj}_2=X_{13} X_{14} X_{24} + X_{14} X_{24} X_{25} + X_{13} X_{14} X_{35} + X_{13} X_{25} X_{35} + X_{24} X_{25} X_{35}.
    \end{equation*}
\end{ex}

\begin{rem}
    We found the AV-representation from \Cref{ex:3dassoc} heuristically by taking iterated derivatives of $\mathrm{Adj}_3$ in all but two of the primary variables and then guessing an AV-representation for the resulting degree two polynomial. By \Cref{rem:derivativeAVrep} every principal $2\times2$ submatrix of an AV-representation for $\mathrm{Adj}_3$ is an AV-representation of such a derivative.
\end{rem}

For the rest of the section we will prove the following negative result.

\begin{thm}\label{thm:noavrep}
    For $n\geq 7$ the universal adjoint $\mathrm{Adj}_{n-3}$ of the $(n-3)$-dimensional ABHY associahedron does not admit an AV-representation.
\end{thm}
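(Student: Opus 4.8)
The plan is to run a downward induction on $n$ that reduces every case $n\ge 7$ to the single base case $n=7$, and then to rule out an AV-representation of $\mathrm{Adj}_4$ directly. The inductive engine is \Cref{rem:derivativeAVrep}: since relabeling the primary variables only conjugates an AV-representation by a permutation matrix (which preserves the AV-form), the principal submatrix of an AV-representation on any index subset $S$ is an AV-representation of the iterated partial derivative of $\mathrm{Adj}_{n-3}$ taken in the primary variables indexed by the complement of $S$. Thus AV-representability passes to any iterated derivative in the primary variables.

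The reduction step rests on a purely combinatorial identity. Fix a vertex $v$ of the $n$-gon and differentiate $\mathrm{Adj}_{n-3}$ with respect to all $n-3$ diagonals incident to $v$, i.e. extract the coefficient of their product. A monomial $\prod_{(i,j)\notin T}X_{ij}$ is divisible by this product exactly when the triangulation $T$ uses none of the diagonals at $v$, which happens precisely when $v$ is an ear of $T$ (its triangle is $(v-1,v,v+1)$). Such $T$ biject with triangulations $T'$ of the $(n-1)$-gon obtained by deleting $v$, and under this bijection the surviving factor of the monomial is exactly $\prod_{(i,j)\notin T'}X_{ij}$, so that
\begin{equation*}
\frac{\partial^{\,n-3}\mathrm{Adj}_{n-3}}{\prod_{k}\partial X_{k,v}}=\mathrm{Adj}_{n-4},
\end{equation*}
which does not involve the diagonal $(v-1,v+1)$. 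Now if $\mathrm{Adj}_{n-3}$ has an AV-representation, its secondary variables must be the diagonals of some triangulation $T_0$, since the only coefficient-one monomials are the $\prod_{(i,j)\notin T}X_{ij}$. As every triangulation of a polygon has at least two ears, I may take $v$ to be an ear of $T_0$; then all $n-3$ diagonals at $v$ are non-$T_0$ diagonals, hence \emph{primary}, and the differentiation above is legitimate in the sense of \Cref{rem:derivativeAVrep}. The corresponding principal submatrix is therefore an AV-representation of $\mathrm{Adj}_{n-4}$ once the single leftover secondary variable $X_{v-1,v+1}$ is specialized to zero (it does not appear in $\mathrm{Adj}_{n-4}$). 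This shows that an AV-representation of $\mathrm{Adj}_{n-3}$ forces one of $\mathrm{Adj}_{n-4}$, and iterating down to dimension four reduces the theorem to the base case: $\mathrm{Adj}_4$ (the heptagon, $n=7$) has no AV-representation.

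For the base case I would argue as follows. Any AV-representation of $\mathrm{Adj}_4$ has as secondary variables the four diagonals of a triangulation $T_0$ of the heptagon, and up to the dihedral symmetry $D_7$ there are only a handful of such $T_0$. For each I would produce an obstruction from the following elementary fact about $3\times3$ AV-representations over $\C[y]$: if a multi-affine degree-three polynomial $g$ with unit leading coefficient has a representation $\mathrm{diag}(x_1,x_2,x_3)+(a_{ij})$, then setting $r_{ij}=a_{ii}a_{jj}-(\text{coefficient of the third variable})=a_{ij}a_{ji}$ and letting $s$ be the value forced on $a_{12}a_{23}a_{31}+a_{13}a_{21}a_{32}$ by the constant term, these two products are the roots of $t^2-st+r_{12}r_{13}r_{23}$. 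As each is a product of entries in $\C[y]$, the discriminant $s^2-4r_{12}r_{13}r_{23}$ must be a perfect square in $\C[y]$. For each $T_0$ I would exhibit a threefold primary derivative of $\mathrm{Adj}_4$ whose discriminant is \emph{not} a square, ruling out the representation; by contrast the existence of the representation in \Cref{ex:3dassoc} forces all such discriminants of $\mathrm{Adj}_3$ to be squares, so the obstruction is genuinely new at $\mathrm{Adj}_4$. Where this $3\times3$ test does not obstruct some $T_0$, I would fall back to the size-four principal-minor (hyperdeterminant) relations, which are the first relations cutting out the image of the principal-minor map, or to a direct computer-algebra check that the polynomial system defining an AV-representation is infeasible.

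The main obstacle is precisely this base case. The reduction is clean and combinatorial, but it bottoms out at the smallest dimension where a relation among principal minors appears, so the base case cannot be inherited from a smaller associahedron (indeed $\mathrm{Adj}_3$ \emph{is} representable) and must be established intrinsically, uniformly over all secondary triangulations of the heptagon. Verifying non-squareness (or the failure of the principal-minor relations) for each such $T_0$ is the crux of the argument.
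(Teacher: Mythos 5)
Your reduction step is correct and is essentially identical to the paper's \Cref{lem:reduceto4}: the secondary variables of any AV-representation of $\mathrm{Adj}_{n-3}$ must be the diagonals of a triangulation $T_0$, every triangulation has an ear $v$, all diagonals at $v$ are therefore primary, and the iterated derivative with respect to them equals $\mathrm{Adj}_{n-4}$, which by \Cref{rem:derivativeAVrep} (plus permutation invariance of the AV-form) inherits an AV-representation; iterating reduces everything to $\mathrm{Adj}_4$. The genuine gap is that you never actually prove this base case, and you say so yourself. Your plan is to enumerate the four dihedral classes of heptagon triangulations $T_0$ and to obstruct each one either by a $3\times 3$ discriminant test (a valid necessary condition: the two products $a_{12}a_{23}a_{31}$ and $a_{13}a_{21}a_{32}$ are the roots of $t^2-st+r_{12}r_{13}r_{23}$, so $s^2-4r_{12}r_{13}r_{23}$ must be a perfect square), or, failing that, by principal-minor relations or a computer-algebra feasibility check. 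But no obstruction is exhibited for even one class: you do not name a candidate $T_0$, do not say which iterated derivative to take, and do not verify any non-squareness. This is not a routine verification one can wave at. Since $\mathrm{Adj}_3$ \emph{does} have an AV-representation (\Cref{ex:3dassoc}), many iterated derivatives of $\mathrm{Adj}_4$ are themselves representable, so for each $T_0$ one must find a derivative on which the obstruction actually fires, and that choice is precisely where the mathematical content lies.

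For comparison, the paper closes the base case in two moves. First it proves a rigidity statement one level down (\Cref{lem:snake3d}): any AV-representation of $\mathrm{Adj}_3$ must have the snake triangulation as its secondary variables; the proof uses the Rayleigh-difference factorization criterion of Al Ahmadieh and Vinzant (\Cref{lem:abeer}) together with an explicit argument that a certain polynomial $G$ admits no factorization into two factors affine in $X_{35}$. Then, applying the reduction lemma at three different vertices of the heptagon, the secondary triangulation of a hypothetical AV-representation of $\mathrm{Adj}_4$ must restrict to a snake on three different hexagons, which leaves a single candidate (the last one in \Cref{fig:heptagonsnake}); for that candidate, the derivative $F=X_{57}\,\mathrm{Adj}_3+X_{16}X_{26}X_{36}X_{46}\,\mathrm{Adj}_2$ has Rayleigh difference equal to a monomial times the same polynomial $G$, contradicting \Cref{lem:abeer}. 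So the paper never needs to test all heptagon classes: hexagon rigidity eliminates all but one. To complete your argument you must either carry out your discriminant computations uniformly over all symmetry classes of $T_0$ (accepting that some classes may require your fallback tools), or adopt the paper's strategy of constraining $T_0$ through the hexagon before applying a single explicit factorization obstruction.
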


We will make use of the following lemmas.

\begin{lem}[\cite{al2024determinantal}*{Theorem 3.1}]\label{lem:abeer}
    Let $f\in\C[x_1,\ldots,x_n]$ be a polynomial that is multi-affine in the variables $x_1,\ldots,x_d$. If $f$ has an {AV-representation} with respect to $x_1,\ldots,x_d$, then for every $i\neq j\in[d]$ the \emph{Rayleigh difference}    \begin{equation*}
        \Delta_{ij}(f)=\frac{\partial f}{\partial x_i}\cdot \frac{\partial f}{\partial x_j}- f\cdot \frac{\partial^2 f}{\partial x_i\partial x_j}
    \end{equation*}
    is the product of two polynomials that are multi-affine in $x_1,\ldots,x_d$. These polynomials can be chosen to be $(d-1)\times(d-1)$-subdeterminants of the AV-representation.
\end{lem}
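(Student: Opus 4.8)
The plan is to work directly with the AV-representation $M = \operatorname{diag}(x_1,\ldots,x_d) + A$, where $A$ has entries in the secondary variables, so that $f = \det M$. The crucial structural feature is that each primary variable $x_i$ occurs in $M$ only in the single diagonal entry $M_{ii} = x_i + A_{ii}$. First I would use this together with Jacobi's formula (cofactor expansion) to express the partial derivatives appearing in $\Delta_{ij}(f)$ as subdeterminants of $M$. Writing $M_{\hat a,\hat b}$ for the submatrix of $M$ with row $a$ and column $b$ deleted, one gets $\frac{\partial f}{\partial x_i} = \det(M_{\hat i,\hat i})$ directly, since the relevant cofactor sign $(-1)^{i+i}$ is trivial. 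Differentiating once more — noting that for $i\neq j$ the diagonal entry carrying $x_j$ survives in $M_{\hat i,\hat i}$ and again sits on the diagonal, so its cofactor sign is again trivial — yields $\frac{\partial^2 f}{\partial x_i\partial x_j} = \det(M_{\hat i\hat j,\hat i\hat j})$, the determinant with rows and columns $i,j$ removed.

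With these identifications the Rayleigh difference becomes
\begin{equation*}
    \Delta_{ij}(f) = \det(M_{\hat i,\hat i})\det(M_{\hat j,\hat j}) - \det(M)\det(M_{\hat i\hat j,\hat i\hat j}).
\end{equation*}
The heart of the argument is then a single application of the Desnanot--Jacobi (Dodgson condensation) identity. Assuming $i<j$ without loss of generality (the Rayleigh difference is symmetric in $i$ and $j$), this identity applied with the index set $\{i,j\}$ removed from both rows and columns reads
\begin{equation*}
    \det(M)\det(M_{\hat i\hat j,\hat i\hat j}) = \det(M_{\hat i,\hat i})\det(M_{\hat j,\hat j}) - \det(M_{\hat i,\hat j})\det(M_{\hat j,\hat i}).
\end{equation*}
Substituting this into the previous display makes the two copies of $\det(M_{\hat i,\hat i})\det(M_{\hat j,\hat j})$ cancel, leaving exactly
\begin{equation*}
    \Delta_{ij}(f) = \det(M_{\hat i,\hat j})\cdot\det(M_{\hat j,\hat i}).
\end{equation*}

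Both factors are $(d-1)\times(d-1)$ subdeterminants of the AV-representation, which proves the claim. To finish, I would verify the two asserted properties. First, each factor is multi-affine in $x_1,\ldots,x_d$: the variables $x_i$ and $x_j$ are absent from $M_{\hat i,\hat j}$ because row $i$ and column $j$ have been deleted, while every other $x_k$ survives in exactly one entry, the diagonal position $(k,k)$, so the determinant is linear in each such $x_k$ by multilinearity; the same holds for $M_{\hat j,\hat i}$. Second, these are genuine $(d-1)\times(d-1)$ subdeterminants of $M$ by construction. The main obstacle is the sign bookkeeping in the Desnanot--Jacobi identity, but because we only ever delete equal row and column index sets, or differentiate diagonal entries, every cofactor sign $(-1)^{2m}$ collapses to $+1$; hence no sign survives into the final factorization and the computation goes through cleanly.
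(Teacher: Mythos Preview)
Your argument is correct. The identification of $\partial f/\partial x_i$ and $\partial^2 f/\partial x_i\partial x_j$ with principal minors of $M$ is valid because each primary variable sits in a single diagonal entry, and the Desnanot--Jacobi identity then gives exactly the factorization $\Delta_{ij}(f)=\det(M_{\hat i,\hat j})\det(M_{\hat j,\hat i})$. The multi-affinity of each factor follows as you explain.

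Regarding comparison with the paper: the paper does not actually supply a proof of this lemma. It is stated with a citation to Al Ahmadieh and Vinzant \cite{al2024determinantal}*{Theorem 3.1} and used as a black box. Your proof is the standard one and is essentially what appears in that reference, so there is no substantive difference to discuss.
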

\begin{figure}[ht]
    \begin{tikzpicture}[line cap=round,line join=round,x=1.0cm,y=1.0cm]
\clip(-1.22,-0.2) rectangle (2.23,1.89);
\draw [color=black] (0,0)-- (1,0);
\draw [color=black] (1,0)-- (1.5,0.87);
\draw [color=black] (1.5,0.87)-- (1,1.73);
\draw [color=black] (1,1.73)-- (0,1.73);
\draw [color=black] (0,1.73)-- (-0.5,0.87);
\draw [color=black] (-0.5,0.87)-- (0,0);
\draw [line width=2pt] (0,0)-- (1,0);
\draw [line width=2pt] (1,0)-- (1.5,0.87);
\draw [line width=2pt] (1.5,0.87)-- (1,1.73);
\draw [line width=2pt] (1,1.73)-- (0,1.73);
\draw [line width=2pt] (0,1.73)-- (-0.5,0.87);
\draw [line width=2pt] (-0.5,0.87)-- (0,0);
\draw [line width=2pt] (0,0)-- (1,0);
\draw [line width=2pt,color=red] (0,0)-- (0,1.73);
\draw [line width=2pt,color=red] (0,1.73)-- (1,0);
\draw [line width=2pt,color=red] (1,0)-- (1,1.73);
\begin{scriptsize}
\fill [color=black] (0,0) circle (2pt);
\fill [color=black] (1,0) circle (2pt);
\fill [color=black] (1.5,0.87) circle (2pt);
\fill [color=black] (1,1.73) circle (2pt);
\fill [color=black] (0,1.73) circle (2pt);
\fill [color=black] (-0.5,0.87) circle (2pt);
\draw[color=black] (-0.2,-0.1) node {$1$};
\draw[color=black] (1.2,-0.1) node {$2$};
\draw[color=black] (1.7,0.87) node {$3$};
\draw[color=black] (1.2,1.74) node {$4$};
\draw[color=black] (-0.2,1.74) node {$5$};
\draw[color=black] (-0.7,0.87) node {$6$};
\end{scriptsize}
\end{tikzpicture}\begin{tikzpicture}[line cap=round,line join=round,x=1.0cm,y=1.0cm]
\clip(-1.22,-0.2) rectangle (2.23,1.89);
\draw [color=black] (0,0)-- (1,0);
\draw [color=black] (1,0)-- (1.5,0.87);
\draw [color=black] (1.5,0.87)-- (1,1.73);
\draw [color=black] (1,1.73)-- (0,1.73);
\draw [color=black] (0,1.73)-- (-0.5,0.87);
\draw [color=black] (-0.5,0.87)-- (0,0);
\draw [line width=2pt] (0,0)-- (1,0);
\draw [line width=2pt] (1,0)-- (1.5,0.87);
\draw [line width=2pt] (1.5,0.87)-- (1,1.73);
\draw [line width=2pt] (1,1.73)-- (0,1.73);
\draw [line width=2pt] (0,1.73)-- (-0.5,0.87);
\draw [line width=2pt] (-0.5,0.87)-- (0,0);
\draw [line width=2pt] (0,0)-- (1,0);
\draw [line width=2pt,color=red] (-0.5,0.87)-- (1,1.73);
\draw [line width=2pt,color=red] (-0.5,0.87)-- (1.5,0.87);
\draw [line width=2pt,color=red] (-0.5,0.87)-- (1,0);
\begin{scriptsize}
\fill [color=black] (0,0) circle (2pt);
\fill [color=black] (1,0) circle (2pt);
\fill [color=black] (1.5,0.87) circle (2pt);
\fill [color=black] (1,1.73) circle (2pt);
\fill [color=black] (0,1.73) circle (2pt);
\fill [color=black] (-0.5,0.87) circle (2pt);
\draw[color=black] (-0.2,-0.1) node {$1$};
\draw[color=black] (1.2,-0.1) node {$2$};
\draw[color=black] (1.7,0.87) node {$3$};
\draw[color=black] (1.2,1.74) node {$4$};
\draw[color=black] (-0.2,1.74) node {$5$};
\draw[color=black] (-0.7,0.87) node {$6$};
\end{scriptsize}
\end{tikzpicture}\begin{tikzpicture}[line cap=round,line join=round,x=1.0cm,y=1.0cm]
\clip(-1.22,-0.2) rectangle (2.23,1.89);
\draw [color=black] (0,0)-- (1,0);
\draw [color=black] (1,0)-- (1.5,0.87);
\draw [color=black] (1.5,0.87)-- (1,1.73);
\draw [color=black] (1,1.73)-- (0,1.73);
\draw [color=black] (0,1.73)-- (-0.5,0.87);
\draw [color=black] (-0.5,0.87)-- (0,0);
\draw [line width=2pt] (0,0)-- (1,0);
\draw [line width=2pt] (1,0)-- (1.5,0.87);
\draw [line width=2pt] (1.5,0.87)-- (1,1.73);
\draw [line width=2pt] (1,1.73)-- (0,1.73);
\draw [line width=2pt] (0,1.73)-- (-0.5,0.87);
\draw [line width=2pt] (-0.5,0.87)-- (0,0);
\draw [line width=2pt] (0,0)-- (1,0);
\draw [line width=2pt,color=red] (-0.5,0.87)-- (1,1.73);
\draw [line width=2pt,color=red] (1,0)-- (1,1.73);
\draw [line width=2pt,color=red] (-0.5,0.87)-- (1,0);
\begin{scriptsize}
\fill [color=black] (0,0) circle (2pt);
\fill [color=black] (1,0) circle (2pt);
\fill [color=black] (1.5,0.87) circle (2pt);
\fill [color=black] (1,1.73) circle (2pt);
\fill [color=black] (0,1.73) circle (2pt);
\fill [color=black] (-0.5,0.87) circle (2pt);
\draw[color=black] (-0.2,-0.1) node {$1$};
\draw[color=black] (1.2,-0.1) node {$2$};
\draw[color=black] (1.7,0.87) node {$3$};
\draw[color=black] (1.2,1.74) node {$4$};
\draw[color=black] (-0.2,1.74) node {$5$};
\draw[color=black] (-0.7,0.87) node {$6$};
\end{scriptsize}
\end{tikzpicture}
    \caption{Up to dihedral symmetry, every triangulation of the hexagon is one of these three: the snake (left), the shell (middle) and the triangle (right).}
    \label{fig:hexagontriangulations}
\end{figure}
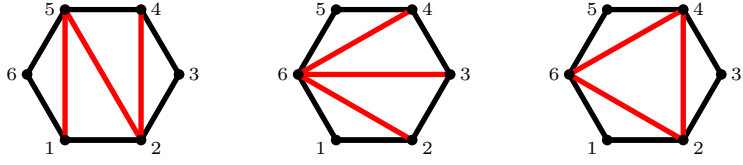
\begin{lem}\label{lem:snake3d}
    Up to dihedral symmetry, the secondary variables of every AV-rep\-resentation of $\mathrm{Adj}_3$ correspond to the edges of the snake triangulation in \Cref{fig:hexagontriangulations}.
\end{lem}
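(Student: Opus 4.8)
The plan is to combine the dihedral symmetry of $\mathrm{Adj}_3$ with the factorization obstruction of \Cref{lem:abeer}. First I would record two structural observations. Since the coefficient of the product of the primary variables of any AV-representation must equal $1$, and $\mathrm{Adj}_3$ is homogeneous of degree $6$ with every monomial having coefficient $1$, there must be exactly six primary variables whose product is a monomial of $\mathrm{Adj}_3$; as each monomial is the complement of a triangulation, the three secondary variables are forced to form a triangulation of the hexagon. Moreover $\mathrm{Adj}_3=\sum_{T}\prod_{(i,j)\notin T}X_{ij}$ is invariant, up to relabeling the $X_{ij}$, under the dihedral group $D_6$ acting on the hexagon, so $D_6$ permutes AV-representations and it suffices to treat one representative from each orbit of triangulations. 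By \Cref{fig:hexagontriangulations} there are exactly three orbits: the snake, the shell $\{X_{26},X_{36},X_{46}\}$, and the triangle $\{X_{24},X_{26},X_{46}\}$. The snake already occurs as the secondary triangulation in \Cref{ex:3dassoc}, so the whole content of the lemma is to rule out the shell and the triangle.

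\textbf{The obstruction.} The tool is \Cref{lem:abeer}: if an AV-representation with primary variables $x_1,\dots,x_d$ exists, then for every pair $i\neq j$ of primary variables the Rayleigh difference $\Delta_{ij}(\mathrm{Adj}_3)$ factors as a product of two polynomials multi-affine in $x_1,\dots,x_d$. Writing $\mathrm{Adj}_3 = x_i x_j A + x_i B + x_j C + D$ with $A,B,C,D$ free of $x_i,x_j$, I would use the convenient identity $\Delta_{ij}(\mathrm{Adj}_3)=BC-AD$, which is immediate to assemble from the $14$ monomials. Now observe that a factorization $\Delta_{ij}=PQ$ with $P,Q$ multi-affine in the primary variables exists if and only if the irreducible factors of $\Delta_{ij}$ can be partitioned into two groups whose products are each multi-affine in the primary variables. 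Hence a clean \emph{sufficient} obstruction is the presence of a single irreducible factor of $\Delta_{ij}$ of degree at least two in some primary variable: such a factor cannot be placed in either group, so no valid partition exists and \Cref{lem:abeer} is violated.

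\textbf{Execution.} For the shell, with primary variables $\{X_{13},X_{14},X_{15},X_{24},X_{25},X_{35}\}$, I would take $i=X_{15}$ and $j=X_{35}$. A direct computation of $BC-AD$ yields
\[
\Delta_{ij}(\mathrm{Adj}_3)=X_{14}X_{24}X_{26}X_{36}X_{46}\cdot F,
\]
where, after extracting the monomial prefactor, $F$ has degree two in $X_{13}$ (the coefficient of $X_{13}^2$ contains the monomial $X_{25}^2X_{46}$, which cannot cancel). If $F$ is irreducible, the criterion above forbids an AV-representation with the shell as secondary triangulation. The triangle is handled by the same recipe with an analogously chosen non-crossing pair of primary diagonals. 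Together with \Cref{ex:3dassoc}, this leaves the snake as the only possibility up to dihedral symmetry.

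\textbf{Main obstacle.} The hard part will be verifying that the relevant cofactor is genuinely irreducible, rather than splitting as a product of two factors each of degree one in the offending primary variable; this irreducibility is exactly what distinguishes the three orbits, and I would confirm it by computer algebra in \texttt{Macaulay2}, with code available at \cite{detrepscode}. Two points require care. First, the pair $(i,j)$ must be chosen deliberately: if the two diagonals cross, no triangulation contains both, forcing $D=0$ and the trivial factorization $\Delta_{ij}=BC$, which gives no obstruction, and even for some non-crossing pairs the Rayleigh difference does split into two multi-affine factors, so one must search for a witnessing pair. Second, one should check that the monomial prefactor has been fully removed before testing irreducibility of $F$. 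Since there are only three orbits and finitely many pairs, the argument reduces to a finite, though not entirely mechanical, verification.
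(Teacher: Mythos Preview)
Your proposal follows the same overall strategy as the paper: reduce via dihedral symmetry to the shell and triangle orbits, then exclude each by exhibiting a Rayleigh difference that fails the factorization criterion of \Cref{lem:abeer}. The execution differs in two respects worth noting. First, the paper chooses the pair $(X_{13},X_{15})$ rather than your $(X_{15},X_{35})$; since $X_{13},X_{15},X_{35}$ are primary for \emph{both} the shell and the triangle, a single Rayleigh-difference computation disposes of both orbits simultaneously (your pair would in fact do the same, so your separate treatment of the triangle is unnecessary). Second, and more substantively, the paper avoids any appeal to computer algebra for irreducibility. Rather than showing the cofactor $G$ is irreducible, it argues directly that $G$ cannot factor as $(A+BX_{35})(C+DX_{35})$: the coefficient $G_2$ of $X_{35}^2$ is seen to be irreducible by a one-line specialization, forcing $\{B,D\}=\{G_2,1\}$; homogeneity then makes $C$ a linear form, and since $C$ cannot involve any variable already in $G_2$ (else the product would fail to be multi-affine in that variable), one gets $C=0$, contradicting $X_{35}\nmid G$. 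This hand argument is short and replaces what you identify as the main obstacle.
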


\begin{proof}
    Assume that this is not the case. Applying a dihedral symmetry we can then assume that the secondary variables are either $X_{26},X_{36},X_{46}$ or $X_{24},X_{26},X_{46}$. In both cases, the variables $X_{13},X_{15},X_{35}$ are primary. We calculate
    \begin{equation*}
        \frac{\partial A_3}{\partial X_{13}}\cdot \frac{\partial A_3}{\partial X_{15}}- A_3\cdot \frac{\partial^2 A_3}{\partial X_{13}\partial X_{15}}=-X_{14} X_{24} X_{25} X_{26} X_{36} X_{46}\cdot G
    \end{equation*}
    where $G$ is given by
    \begin{align*}
        G&=X_{14} X_{24} X_{25} X_{35} - X_{14} X_{24} X_{26} X_{35} + X_{24} X_{25} X_{35}^2 - X_{14} X_{26} X_{35}^2 -  X_{25} X_{26} X_{35}^2\\& + X_{14} X_{25} X_{35} X_{36} - X_{24} X_{26} X_{35} X_{36} + X_{25} X_{26} X_{35} X_{36}\\& +  X_{25} X_{35}^2 X_{36} - X_{26} X_{35}^2 X_{36} + X_{14} X_{24} X_{25} X_{46} - X_{14} X_{24} X_{26} X_{46}\\& +  X_{24} X_{25} X_{35} X_{46} - X_{14} X_{26} X_{35} X_{46} - X_{25} X_{26} X_{35} X_{46}\\& +  X_{14} X_{24} X_{36} X_{46} + X_{14} X_{35} X_{36} X_{46} + X_{24} X_{35} X_{36} X_{46} + X_{35}^2 X_{36} X_{46}.
    \end{align*}
    By \Cref{lem:abeer} it suffices to prove that $G$ is cannot be written as a product 
    \begin{equation*}
        G=(A+B\cdot X_{35})\cdot(C+D\cdot X_{35})
    \end{equation*}
    where the polynomials $A,B,C,D$ do not depend on $X_{35}$. 
    Assume for the sake of a contradiction that there exists such a factorization.
    Then $B\cdot D$ equals the coefficient of $X_{35}^2$, which is
    \begin{equation*}
        G_2=X_{24} X_{25} - X_{14} X_{26} - X_{25} X_{26} + X_{25} X_{36} - X_{26} X_{36} + X_{36} X_{46}.
    \end{equation*}
    We have that
    \begin{equation*}
        G_2|_{X_{14}=X_{24}=X_{46}=0}= - X_{25} X_{26} + X_{25} X_{36} - X_{26} X_{36}
    \end{equation*}
    which is clearly irreducible. Hence, without loss of generality, we can assume that $B=G_2$ and $D=1$. Because $G$ is homogeneous, each factor must be homogeneous as well, which shows that $C$ is a linear form. Because $G$ has degree one in all variables except for $X_{35}$, no variable that appears in $G_2$ can appear in $C$. Since every variable that appears in $G$, except for $X_{35}$, also appears in $G_2$, this implies that $C=0$. This is a contradiction because $G$ is not divisible by $X_{35}$.
\end{proof}

\begin{lem}\label{lem:reduceto4}
    Assume that there is an AV-representation of $\mathrm{Adj}_{n-2}$. Then, after applying a cyclic permutation of the variables, every diagonal of the $(n+1)$-gon that contains the vertex labeled by $n+1$ corresponds to a primary variable and the derivative with respect to all such primary variables is equal to $\mathrm{Adj}_{n-3}$. In particular, there exists an AV-representation for $\mathrm{Adj}_{n-3}$.
\end{lem}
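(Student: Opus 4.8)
The plan is to exploit the very special combinatorial form of the universal adjoint. Recall that $\mathrm{Adj}_{n-2}=\sum_{T\in\mathcal{T}_{n+1}}\prod_{(i,j)\notin T}X_{ij}$, so its monomials are exactly the complements of triangulations of the $(n+1)$-gon, each squarefree and with coefficient one; in particular $\mathrm{Adj}_{n-2}$ is multi-affine in all its variables. First I would pin down the secondary variables. By \Cref{def:avrepresentation} the coefficient of the product of all primary variables equals one, so this product is itself a monomial of $\mathrm{Adj}_{n-2}$. Hence the primary diagonals are precisely the complement of some triangulation $T_0$, and the secondary variables are exactly the diagonals of $T_0$. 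This is consistent numerically: $\mathrm{Adj}_{n-2}$ has degree $\binom{n-1}{2}$, so there are $\binom{n+1}{2}-(n+1)-\binom{n-1}{2}=n-2$ secondary variables, matching the number of diagonals in any triangulation of the $(n+1)$-gon.

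Next I would locate a good vertex. Every triangulation of a convex polygon with at least three vertices has an ear, i.e., a vertex incident to no diagonal of the triangulation. Let $v$ be an ear tip of $T_0$; then none of the diagonals emanating from $v$ lie in $T_0$, so each such diagonal is a primary variable. Since $\mathrm{Adj}_{n-2}$ is invariant, up to relabeling of the variables, under the dihedral symmetry of the $(n+1)$-gon, applying the cyclic rotation that sends $v$ to the vertex $n+1$ converts the given AV-representation into an AV-representation of $\mathrm{Adj}_{n-2}$ in which every diagonal incident to $n+1$ is primary.

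The combinatorial heart is the derivative computation. Taking the iterated partial derivative of $\mathrm{Adj}_{n-2}$ with respect to all $n-2$ variables $X_{n+1,j}$ annihilates every monomial that fails to contain all of them; by the monomial description, the surviving monomials correspond exactly to the triangulations $T$ of the $(n+1)$-gon that use no diagonal at $n+1$. Each such $T$ must contain the ear triangle on $\{n,n+1,1\}$, hence the diagonal $(1,n)$, and deleting this triangle gives a bijection $T\mapsto T'$ onto the triangulations of the $n$-gon on $\{1,\dots,n\}$. I would then verify that after dividing out the $X_{n+1,j}$ the differentiated monomial of $T$ becomes $\prod_{(i,j)\notin T'}X_{ij}$, the product ranging over diagonals of this $n$-gon; here $(1,n)$ is now an edge of the $n$-gon and drops out of the product because it lies in $T$. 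Summing over all $T'$ yields precisely $\mathrm{Adj}_{n-3}$.

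Finally, I would reorder the primary variables so that the $n-2$ diagonals at $n+1$ are last, and apply \Cref{rem:derivativeAVrep} repeatedly: each differentiation passes to the upper-left block of the AV-representation, so after $n-2$ steps the upper-left $\binom{n-2}{2}\times\binom{n-2}{2}$ block is an AV-representation of the resulting derivative, which we just identified as $\mathrm{Adj}_{n-3}$. I expect the main obstacle to be the bookkeeping in the derivative step—correctly characterizing the surviving triangulations, tracking the distinguished diagonal $(1,n)$, and checking that the relabeling matches the standard variables of $\mathrm{Adj}_{n-3}$—whereas the extraction of the triangulation of secondary variables from the coefficient-one condition and the ear argument are short.
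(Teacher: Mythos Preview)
Your proposal is correct and follows essentially the same argument as the paper: identify the secondary variables as the diagonals of a triangulation $T_0$, use an ear of $T_0$ to cyclically rotate so that all diagonals at $n+1$ are primary, compute the iterated derivative as a sum over triangulations of the $n$-gon via the obvious bijection, and then apply \Cref{rem:derivativeAVrep}. Your write-up is slightly more explicit than the paper's in justifying why the secondary variables must form a triangulation (via the coefficient-one condition) and in tracking the diagonal $(1,n)$, but the route is the same.
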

\begin{proof}
    The secondary variables correspond to the diagonals in a certain triangulation $T_0
    $ of the $(n+1)$-gon. For every triangulation of the $(n+1)$-gon there exists a vertex which is not part of any diagonal in the triangulation. Hence, after a cyclic permutation of the vertices, we may assume that no diagonal in $T_0$ contains the vertex labeled by $n+1$. Then every diagonal of the $(n+1)$-gon that contains the vertex labeled by $n+1$ corresponds to a primary variable and the derivative in question equals to
    \begin{equation*}
        \sum_{T}\prod_{\substack{(i,j)\not\in T\\ i,j\in[n]}} X_{ij}.
    \end{equation*}
    where the sum is over all triangulations that do not contain any diagonal that contains the vertex $n+1$. These are in natural bijection with the triangulations of the $n$-gon which proves that the derivative in question is equal to $\mathrm{Adj}_{n-3}$. The last statement now follows from \Cref{rem:derivativeAVrep}.
\end{proof}
\begin{figure}
    \begin{tikzpicture}[line cap=round,line join=round,x=1.0cm,y=1.0cm]
\clip(-1.99,-0.3) rectangle (2.9,2.43);
\draw [line width=2pt] (-0.4,1.76)-- (0.5,2.19);
\draw [line width=2pt] (0.5,2.19)-- (1.4,1.76);
\draw [line width=2pt] (1.4,1.76)-- (1.62,0.78);
\draw [line width=2pt] (1.62,0.78)-- (1,0);
\draw [line width=2pt] (1,0)-- (0,0);
\draw [line width=2pt] (0,0)-- (-0.62,0.78);
\draw [line width=2pt] (-0.62,0.78)-- (-0.4,1.76);
\draw [line width=2pt,color=red] (0,0)-- (-0.4,1.76);
\draw [line width=2pt,color=red] (-0.4,1.76)-- (1,0);
\draw [line width=2pt,color=red] (-0.4,1.76)--(1.62,0.78);
\draw [line width=2pt,color=red] (1.62,0.78)-- (0.5,2.19);
\begin{scriptsize}
\fill [color=black] (0,0) circle (2pt); 
\fill [color=black] (1,0) circle (2pt); 
\fill [color=black] (1.62,0.78) circle (2pt); 
\fill [color=black] (1.4,1.76) circle (2pt); 
\fill [color=black] (0.5,2.19) circle (2pt); 
\fill [color=black] (-0.4,1.76) circle (2pt); 
\fill [color=black] (-0.62,0.78) circle (2pt); 
\draw[color=black] (-0.2,-0.1) node {$1$};
\draw[color=black] (1.2,-0.1) node {$2$};
\draw[color=black] (1.82,0.78) node {$3$};
\draw[color=black] (1.6,1.76) node {$4$};
\draw[color=black] (0.5,2.35) node {$5$};
\draw[color=black] (-0.6,1.76) node {$6$};
\draw[color=black] (-0.82,0.78) node {$7$};
\end{scriptsize}
\end{tikzpicture}\begin{tikzpicture}[line cap=round,line join=round,x=1cm,y=1cm]
\clip(-1.99,-0.3) rectangle (2.9,2.43);
\draw [line width=2pt] (-0.4,1.76)-- (0.5,2.19);
\draw [line width=2pt] (0.5,2.19)-- (1.4,1.76);
\draw [line width=2pt] (1.4,1.76)-- (1.62,0.78);
\draw [line width=2pt] (1.62,0.78)-- (1,0);
\draw [line width=2pt] (1,0)-- (0,0);
\draw [line width=2pt] (0,0)-- (-0.62,0.78);
\draw [line width=2pt] (-0.62,0.78)-- (-0.4,1.76);
\draw [line width=2pt,color=red] (0,0)-- (-0.4,1.76);
\draw [line width=2pt,color=red] (0,0)-- (1.62,0.78);
\draw [line width=2pt,color=red] (0,0)--(1.4,1.76);
\draw [line width=2pt,color=red] (1.4,1.76)-- (-0.4,1.76);
\begin{scriptsize}
\fill [color=black] (0,0) circle (2pt); 
\fill [color=black] (1,0) circle (2pt); 
\fill [color=black] (1.62,0.78) circle (2pt); 
\fill [color=black] (1.4,1.76) circle (2pt); 
\fill [color=black] (0.5,2.19) circle (2pt); 
\fill [color=black] (-0.4,1.76) circle (2pt); 
\fill [color=black] (-0.62,0.78) circle (2pt); 
\draw[color=black] (-0.2,-0.1) node {$1$};
\draw[color=black] (1.2,-0.1) node {$2$};
\draw[color=black] (1.82,0.78) node {$3$};
\draw[color=black] (1.6,1.76) node {$4$};
\draw[color=black] (0.5,2.35) node {$5$};
\draw[color=black] (-0.6,1.76) node {$6$};
\draw[color=black] (-0.82,0.78) node {$7$};
\end{scriptsize}
\end{tikzpicture}\begin{tikzpicture}[line cap=round,line join=round,x=1.0cm,y=1.0cm]
\clip(-1.99,-0.3) rectangle (2.9,2.43);
\draw [line width=2pt] (-0.4,1.76)-- (0.5,2.19);
\draw [line width=2pt] (0.5,2.19)-- (1.4,1.76);
\draw [line width=2pt] (1.4,1.76)-- (1.62,0.78);
\draw [line width=2pt] (1.62,0.78)-- (1,0);
\draw [line width=2pt] (1,0)-- (0,0);
\draw [line width=2pt] (0,0)-- (-0.62,0.78);
\draw [line width=2pt] (-0.62,0.78)-- (-0.4,1.76);
\draw [line width=2pt,color=red] (0,0)-- (-0.4,1.76);
\draw [line width=2pt,color=red] (0,0)-- (0.5,2.19);
\draw [line width=2pt,color=red] (1,0)-- (0.5,2.19);
\draw [line width=2pt,color=red] (1,0)-- (1.4,1.76);
\begin{scriptsize}
\fill [color=black] (0,0) circle (2pt); 
\fill [color=black] (1,0) circle (2pt); 
\fill [color=black] (1.62,0.78) circle (2pt); 
\fill [color=black] (1.4,1.76) circle (2pt); 
\fill [color=black] (0.5,2.19) circle (2pt); 
\fill [color=black] (-0.4,1.76) circle (2pt); 
\fill [color=black] (-0.62,0.78) circle (2pt); 
\draw[color=black] (-0.2,-0.1) node {$1$};
\draw[color=black] (1.2,-0.1) node {$2$};
\draw[color=black] (1.82,0.78) node {$3$};
\draw[color=black] (1.6,1.76) node {$4$};
\draw[color=black] (0.5,2.35) node {$5$};
\draw[color=black] (-0.6,1.76) node {$6$};
\draw[color=black] (-0.82,0.78) node {$7$};
\end{scriptsize}
\end{tikzpicture}
    \caption{Three triangulations of the heptagon.}
    \label{fig:heptagonsnake}
\end{figure}
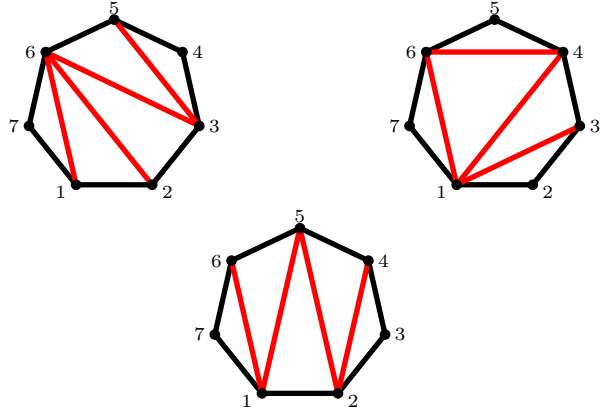
\begin{proof}[Proof of \Cref{thm:noavrep}]
    By \Cref{lem:reduceto4} it suffices to prove the claim for $n-3=4$. Assume for the sake of a contradiction that $\mathrm{Adj}_4$ has an AV-representation. We first determine the secondary variables of the AV-representation. By \Cref{lem:reduceto4} we can assume that they include the variable $X_{16}$. Again by \Cref{lem:reduceto4} it follows from \Cref{lem:snake3d} that the remaining secondary variables correspond to a triangulation of the hexagon with vertices labeled by $1,\ldots,6$ that is, up to dihedral symmetry, the snake from \Cref{fig:hexagontriangulations}. Up to dihedral symmetry, we are then left with the three triangulations from \Cref{fig:heptagonsnake}. Applying the same argument for the vertices labeled by $4$ and $5$ instead of $7$ excludes the upper two triangulations from \Cref{fig:heptagonsnake} because the induced triangulations of the hexagons obtained by removing these vertices do not agree with the snake triangulation.
    Hence we are left with the lower triangulation from \Cref{fig:heptagonsnake}. In this case, the primary variables include $X_{13},X_{27},X_{35},X_{37},X_{47},X_{57}$.
    We consider the derivative $F$ of $\mathrm{Adj}_4$ with respect to the primary variables $X_{27}, X_{37}, X_{47}$. By \Cref{rem:derivativeAVrep} the existence of an AV-representation of $\mathrm{Adj}_4$ implies the existence of an AV-representation of $F$. The monomials of $F$ correspond to triangulations of the heptagon that do not contain the diagonals $27, 37$ and $47$. If such a triangulation does not contain $57$, then it consists of the diagonal $16$ and a triangulation of the hexagon with vertices labeled by $1,\ldots,6$. If such a triangulation contains the diagonal $57$, then it also contains the diagonal $15$ and a triangulation of the pentagon with vertices labeled by $1,\ldots,5$. This shows that
    \begin{equation*}
        F= X_{57} \cdot \mathrm{Adj}_3 + X_{16} X_{26} X_{36} X_{46} \cdot \mathrm{Adj}_2.
    \end{equation*}
    Now we calculate the Rayleigh difference of $F$ with respect to $X_{13}$ and $X_{57}$ as
    \begin{equation*}
        \frac{\partial F}{\partial X_{13}}\cdot \frac{\partial F}{\partial X_{57}}- F\cdot \frac{\partial^2 F}{\partial X_{13}\partial X_{57}}=-X_{14} X_{15} X_{16} X_{24} X_{25} X_{26} X_{36} X_{46}\cdot G
    \end{equation*}
    where $G$ is the polynomial from the proof of \Cref{lem:snake3d}. There we have seen that $G$ cannot we written as the product of two polynomials that are affine in the primary variable $X_{35}$. Thus by \Cref{lem:abeer} the polynomial $F$ and hence $\mathrm{Adj}_4$ does not have an AV-representation.
\end{proof}

Note that \Cref{thm:noavrep} does not imply that $\mathrm{Adj}_{n-3}$, $n\geq7$, does not have a determinantal representation in the sense of \Cref{def:detrep}, cf. \Cref{rem:comparison}. However, in view of the negative results from \Cref{sec:smooth} we do not expect such a representation to exist. Further note that if there existed such a representation
\begin{equation*}
    \mathrm{Adj}_{n-3}=\det\left(\sum X_{ij}\cdot M_{ij}\right)
\end{equation*}
for suitable scalar matrices $M_{ij}$, then it would not reflect the multi-affine structure of $\mathrm{Adj}_{n-3}$, $n\geq7$, in the sense that the $M_{ij}$ have rank one: otherwise, one could transform it to an AV-representation by multiplying suitable invertible matrices from left and right. We do not find it likely that such a representation could serve as an appropriate certificate for the ``hidden zeros'' of $\mathrm{Adj}_{n-3}$ as favored in \cite{arkani2024hidden}.
\begin{ex}
    Already $\mathrm{Adj}_{2}$ does not have a determinantal representation 
    \begin{equation*}
    \mathrm{Adj}_{2}=\det\left(\sum X_{ij}\cdot M_{ij}\right)
    \end{equation*}
     that reflects the multi-affine structure in the sense that the $M_{ij}$ have rank one. Indeed, assume for the sake of a contradiction that there is such a representation. We can transform it, by multiplying suitable invertible matrices from left and right, to an AV-representation whose primary variables include $X_{13}$ and $X_{14}$. We have
     \begin{equation}\label{eq:adj2wronskian}
         \frac{\partial \mathrm{Adj}_{2}}{\partial X_{13}}\cdot \frac{\partial \mathrm{Adj}_{2}}{\partial X_{14}}- \mathrm{Adj}_{2}\cdot \frac{\partial^2 \mathrm{Adj}_{2}}{\partial X_{13}\partial X_{14}}=-X_{24} X_{25} X_{35} (X_{24}-X_{25}+X_{35}).
     \end{equation}
     According to \Cref{lem:abeer}, this polynomial factors into two polynomials that are $2\times2$ subdeterminants of our hypothetical determinantal representation. But if all $M_{ij}$ have rank one, such subdeterminants will  be multi-affine of degree two. Obviously, the polynomial in \Cref{eq:adj2wronskian} does not admit such a factorization.
\end{ex}

\bibliographystyle{alpha}
\bibliography{bibliography}

\end{document}